\documentclass[lettersize]{IEEEtran}
\usepackage{amsmath,amsfonts}
\usepackage{amsthm}
\usepackage{algorithmic}
\usepackage[ruled]{algorithm}
\usepackage{array}
\usepackage[caption=false,font=normalsize,labelfont=sf,textfont=sf]{subfig}
\usepackage{textcomp}
\usepackage{stfloats}
\usepackage{url}
\usepackage{verbatim}
\usepackage{graphicx}
\usepackage{cite}
\usepackage[colorlinks=true, citecolor=blue, linkcolor=black, urlcolor=blue]{hyperref}
\usepackage{cleveref}

\usepackage{booktabs} 
\usepackage{multirow} 
\usepackage{makecell} 
\usepackage{csvsimple} 

\usepackage{enumitem}

\newtheorem{remark}{Remark}
\newtheorem{proposition}{Proposition}
\newtheorem{theorem}{Theorem}
\newtheorem{lemma}{Lemma}

\usepackage{amsopn}

\DeclareMathOperator{\dist}{dist}
\DeclareMathOperator{\dom}{dom}
\DeclareMathOperator*{\argmin}{arg\,min}
\DeclareMathOperator{\prox}{prox}
\DeclareMathOperator{\rank}{rank}
\DeclareMathOperator{\crit}{crit}

\newcommand{\Jcal}[1]{\mathcal{J}_{#1}}  

\usepackage{mathtools}

\DeclarePairedDelimiter{\basenorm}{\lVert}{\rVert}

\NewDocumentCommand{\norm}{ s o m }{%
  \IfBooleanTF{#1}
    {\basenorm*{#3}\IfValueT{#2}{_{#2}}}%
    {\basenorm{#3}\IfValueT{#2}{_{#2}}}%
}

\crefformat{equation}{\textup{#2(#1)#3}}
\crefrangeformat{equation}{\textup{#3(#1)#4--#5(#2)#6}}
\crefmultiformat{equation}{\textup{#2(#1)#3}}{ and \textup{#2(#1)#3}}
{, \textup{#2(#1)#3}}{, and \textup{#2(#1)#3}}
\crefrangemultiformat{equation}{\textup{#3(#1)#4--#5(#2)#6}}%
{ and \textup{#3(#1)#4--#5(#2)#6}}{, \textup{#3(#1)#4--#5(#2)#6}}{, and \textup{#3(#1)#4--#5(#2)#6}}

\Crefformat{equation}{#2Equation~\textup{(#1)}#3}
\Crefrangeformat{equation}{Equations~\textup{#3(#1)#4--#5(#2)#6}}
\Crefmultiformat{equation}{Equations~\textup{#2(#1)#3}}{ and \textup{#2(#1)#3}}
{, \textup{#2(#1)#3}}{, and \textup{#2(#1)#3}}
\Crefrangemultiformat{equation}{Equations~\textup{#3(#1)#4--#5(#2)#6}}%
{ and \textup{#3(#1)#4--#5(#2)#6}}{, \textup{#3(#1)#4--#5(#2)#6}}{, and \textup{#3(#1)#4--#5(#2)#6}}

\crefdefaultlabelformat{#2\textup{#1}#3}

\begin{document}

\title{Automatic Model-Order Selection for Nonnegative Matrix Factorization via Column $\ell_{2,0}$ Regularization}

\author{Yulin He, and Ran Gu%
\thanks{This work was funded by the National Key R\&D Program of China (2022YFA1003800), the National Natural Science Foundation of China (12201318), the Natural Science Foundation of Tianjin (25JCJQJC00300), and the Tianjin Science and Technology Program (24ZXZSSS00320).}
\thanks{Yulin He and Ran Gu are with NITFID, School of Statistics and Data Sciences, LPMC, KLMDASR, LEBPS and AAIS, Nankai University, Tianjin 300071, China (e-mail: 1120240083@mail.nankai.edu.cn; rgu@nankai.edu.cn).}}

\markboth{}{}

\maketitle

\begin{abstract}
Nonnegative matrix factorization represents nonnegative signals as additive combinations of latent components, but its factorization rank, and hence the model order, must usually be specified beforehand. An underestimated order discards signal structure, whereas an overestimated order produces redundant components and unstable decompositions. We propose a column $\ell_{2,0}$-regularized formulation that estimates the model order from an initial upper bound by suppressing inactive columns in both factors. A warm-started regularization path progressively removes redundant components without changing the factor dimensions, and a marginal reconstruction-loss criterion selects an order along the path. To solve the resulting nonconvex and discontinuous problem, we develop an inertial proximal alternating linearized minimization method, a scale-balanced variant, and a proximal active-set method based on P-stationarity. The balancing operation equalizes the norms of paired factor columns while preserving their rank-one products. We characterize the critical points and local minimizers of the model, provide a sufficient-condition result for rank recovery, and prove whole-sequence convergence of the proposed algorithms under explicit step-size and inertial-parameter conditions using the Kurdyka--\L{}ojasiewicz framework. Dedicated experiments show that the warm-started $\lambda$-path is more efficient than increasing- and decreasing-order discrete $r$-paths, while scale balancing yields a more stable rank-selection path. Experiments on synthetic data and diverse signal benchmarks show that both iPALM and PASM provide reliable model-order estimates with favorable computational efficiency.
\end{abstract}

\begin{IEEEkeywords}
column sparsity, model-order selection, nonconvex optimization, nonnegative matrix factorization, proximal alternating minimization
\end{IEEEkeywords}

\section{Introduction}

\IEEEPARstart{N}{onnegative} matrix factorization (NMF) represents a nonnegative data matrix $X\in\mathbb{R}_+^{n\times m}$ as
$X\approx UV^\top$, where $U\in\mathbb{R}_+^{n\times r}$ and $V\in\mathbb{R}_+^{m\times r}$.
The decomposition is additive: each term $U_{:,k}V_{:,k}^{\top}$ describes one latent component, while the two factor columns encode its signature and its coefficients across the observations.
This interpretation makes NMF particularly suitable for nonnegative signals whose components have physical meaning, such as spectral signatures and abundances in hyperspectral images or spectral templates and temporal activations in audio spectrograms \cite{lee1999learning,gillis2020nonnegative}.
Recent signal-processing variants incorporate convolutional structure for temporal signals \cite{degleris2020provably,malagnoux2026connecting}, source identifiability and perceptually motivated divergences for audio separation \cite{leplat2020blind}, continuous functional structure for sampled spectra \cite{hautecoeur2023least}, polarization constraints for multidimensional imaging \cite{flamant2020quaternion}, and measurement-specific noise models \cite{green2024algorithms}.

In these applications, the factorization rank $r$ is not merely a numerical parameter: it is interpreted as a model order, such as the number of active sources, endmembers, or signal components.
This order is rarely known in advance.
Choosing it too small merges or removes genuine components and increases approximation bias, whereas choosing it too large introduces redundant factors, increases sensitivity to noise and initialization, and complicates physical interpretation.
Throughout this paper, the target order refers to the number of underlying nonnegative components: it coincides with the nonnegative rank in the ideal noiseless model and is represented in measured data by the reported number of physical components (e.g., endmembers), allowing for small discrepancies due to noise and model mismatch.

Existing selection methods address this problem from several directions.
Residual-elbow, stability, information-criterion, and cross-validation procedures compare factorizations over a discrete grid of candidate ranks \cite{brunet2004metagenes,squires2017rank,muzzarelli2019rank,eswar2024rank}.
Although broadly applicable, they require repeated nonconvex optimizations and can be sensitive to the candidate grid, initialization, and decision threshold.
Singular-value hard thresholding is inexpensive, but it estimates an algebraic rank under a particular spectral-noise model rather than directly counting nonnegative components.
Bayesian nonparametric NMF can infer a latent model order jointly with the factors \cite{li2020bayesian}, at the cost of specifying probabilistic models and performing posterior inference.
In signal-processing tasks such as hyperspectral unmixing and blind source separation, model-order misspecification is particularly consequential because the selected order directly determines the number of estimated endmembers or sources and thus affects their estimation accuracy.
Existing order-estimation procedures often rely on statistical assumptions about the observed spectra or on discrete searches over candidate orders, which can be computationally expensive and insufficiently robust; the proposed framework instead enables end-to-end automatic order selection directly within these signal factorization models.
Structured NMF models offer another route: minimum-volume regularization may deactivate superfluous audio sources when the assumed order is too large \cite{leplat2020blind}, sparse NMF promotes parsimonious activations \cite{marmin2023majorization}, and overcomplete factor-and-merge strategies can improve the quality of a factorization when a target rank is supplied \cite{guo2025optimal}.
These approaches provide important application-specific mechanisms; a complementary objective is to count and eliminate paired rank-one components directly, starting only from a rough upper bound on the order.

Column sparsity provides a natural optimization mechanism for this purpose.
The column $\ell_{2,0}$ quantity counts nonzero columns and therefore acts directly on the number of active components, unlike entrywise sparsity penalties.
It has been used as a structural constraint on one NMF factor \cite{min2022structured} and as a regularizer for rank-adaptive low-rank matrix recovery \cite{tao2022column}.
Extending this idea to model-order selection in NMF is nevertheless nontrivial: the two nonnegative factors are coupled, their columns have reciprocal scaling ambiguities, the objective is discontinuous, and the selected factor support must be related carefully to the nonnegative rank of the reconstructed signal.

To address these issues, we consider the following column $\ell_{2,0}$-regularized NMF model:
{\small
\begin{multline}\label{eq:l20_regularized_nmf}
\min_{\substack{U \in \mathbb{R}_+^{n \times r},\\ V \in \mathbb{R}_+^{m \times r}}} \Phi_{\lambda,\mu}(U,V) := \frac{1}{2}\|X-UV^\top\|_F^2 + \frac{\mu}{2}\left( \|U\|_F^2 + \|V\|_F^2 \right) \\+ \lambda\left( \|U\|_{2,0} + \|V\|_{2,0} \right).
\end{multline}
}%
Here $r$ is an initial upper bound, $\|U\|_{2,0}$ and $\|V\|_{2,0}$ count active factor columns, and $\lambda>0$ controls component removal.
The symmetric quadratic term with a small $\mu>0$ controls the scale of the factors, yields bounded level sets, and selects balanced representatives from reciprocally scaled factor pairs.

We trace solutions over an increasing sequence of $\lambda$ values.
Each subproblem is initialized by the preceding solution, so redundant components are progressively removed while the factor dimensions remain fixed.
This warm-started continuation avoids repeatedly discarding learned columns or appending randomly initialized columns, as required by a discrete rank path.
The candidate orders along the path are compared through their marginal reconstruction losses, producing a fully data-driven order estimate.
The dedicated continuous-$\lambda$-path versus discrete-$r$-path experiment in \Cref{sec:lambda_path_analysis} verifies these advantages: \Cref{fig:lambda_path} illustrates solution reuse along rank plateaus, and \Cref{fig:time_comparison} compares the resulting total search times on the synthetic and Swimmer data.

The resulting problem is nonconvex and discontinuous, but its block proximal maps reduce to nonnegative projection followed by column hard thresholding.
We exploit this structure in an inertial proximal alternating linearized minimization method and in a proximal active-set method based on P-stationarity.
We also introduce a scale-balanced variant that equalizes the norms of paired factor columns while preserving every rank-one product and hence the reconstructed signal.
This operation mitigates numerical imbalance in the thresholding steps and is included explicitly in the convergence analysis.
The scale-balancing ablation in \Cref{sec:scale_balancing_experiment,fig:scale_comparison} further shows a wider correct-order plateau on the synthetic data and earlier reference-order identification with lower reconstruction error on Swimmer.

The main contributions of this paper are summarized as follows:
\begin{enumerate}
\item \textbf{Model and structural analysis:} We formulate model-order selection as paired column-cardinality regularization within a fixed-dimensional NMF model and characterize its critical points and local minimizers. A sufficient-condition rank-recovery result is included as a supporting theoretical result, with its proof deferred to Appendix~\ref{app:proof_exact_recovery}.

\item \textbf{Algorithms and convergence analysis:} We develop an inertial proximal alternating linearized minimization method, its scale-balanced variant, and a proximal active-set method that refines the identified support through lower-dimensional nonnegative least-squares subproblems. Under explicit step-size and inertial-parameter conditions, a Kurdyka--\L{}ojasiewicz analysis proves finite length and whole-sequence convergence to critical points, including the scale-balancing operation.

\item \textbf{Automatic regularization path and signal experiments:} We construct a warm-started $\lambda$-path and a marginal-loss rule that select the active order without resizing the factors. The continuous-path comparison in \Cref{sec:lambda_path_analysis,fig:time_comparison} demonstrates its computational advantage over increasing- and decreasing-order discrete $r$-paths, while the ablation in \Cref{sec:scale_balancing_experiment,fig:scale_comparison} demonstrates the improved stability provided by scale balancing. Further experiments on synthetic data and diverse signal benchmarks evaluate order selection, reconstruction or factor recovery, and computational cost against singular-value hard thresholding and cross-validation.
\end{enumerate}

The remainder of this paper is organized as follows. Section~II introduces the variational tools and nonnegative-rank quantities used in the analysis. Section~III studies the structural properties of the proposed model. Sections~IV and V develop the proximal alternating and active-set algorithms and establish their convergence. Section~VI reports the numerical experiments, and Section~VII concludes the paper.

{\bf Notation:} $\mathbb{R}^{n\times m}$ denotes the vector space of all real $n\times m$ matrices, and $\mathbb{R}_+^{n\times m}$ denotes the set of non-negative real $n\times m$ matrices. The space is equipped with the trace inner product $\langle X,Y\rangle={\rm trace}(X^\top Y)$ and its induced Frobenius norm $\|X\|_F=\sqrt{\langle X,X\rangle}$. For a matrix $X$, $X^\top$ represents its transpose. We use $\sigma_i(X)$ to denote the $i$-th largest singular value of $X$, and $\sigma_{\min}(X)$ to denote its minimum non-zero singular value. The symbols $\|X\|_2$, $\|X\|_*$, and $\|X\|_{2,0}$ represent the spectral norm, the nuclear norm, and the column $\ell_{2,0}$-norm (number of non-zero columns) of matrix $X$, respectively. For a vector $x$, $\|x\|_2$ denotes its Euclidean $\ell_2$-norm. We use $X_i$ or $X_{:,i}$ to denote the $i$-th column of $X$, and $X_{i,:}$ to denote its $i$-th row. $J_{\!X}$ and $\overline{J}_{\!X}$ denote the index sets of the non-zero and zero columns in $X$, respectively. The symbol $\odot$ denotes the element-wise (Hadamard) product, and $[X]_+ = \max(X, 0)$ denotes element-wise non-negative truncation. Given a point $(\overline{U},\overline{V})\in\mathbb{R}^{n\times r}\times\mathbb{R}^{m\times r}$ and a constant $\delta>0$, let $\mathbb{B}_{\delta}(\overline{U},\overline{V}):=\{(U,V)\,|\,\|(U,V)-(\overline{U},\overline{V})\|_F\le\delta\}$. In the remainder of this paper, for a function $F(U,V)$, $\nabla_{\!U}F(U,V)$ and $\nabla_{\!V}F(U,V)$ denote the partial gradients of $F$ with respect to variables $U$ and $V$. Similarly, $\partial_1\Phi_{\lambda,\mu}(U',V')$ and $\partial_2\Phi_{\lambda,\mu}(U',V')$ denote the partial subdifferentials of $\Phi_{\lambda,\mu}$ at $(U',V')$ with respect to variables $U$ and $V$, respectively.

\section{Preliminaries}\label{Preliminaries}

We use standard notations from variational analysis. For an extended-real-valued function $h\!:\mathbb{R}^p\to[-\infty,+\infty]$, its regular, (limiting), and horizon subdifferentials, as well as its subderivative function, are denoted by $\widehat{\partial}h(x)$, $\partial h(x)$, $\partial^{\infty}h(x)$, and $dh(x)(\cdot)$, respectively, with detailed definitions available in \cite[Definition. 8.1, 8.3]{rockafellar1998variational}. A point $x \in \mathbb{R}^p$ satisfying $0\in\partial h(x)$ is a critical point ($x \in {\rm crit}\,h$).

To handle non-negativity, we introduce the indicator function $\delta_{\mathbb{R}_+}(W)$, which equals $0$ if $W \geq 0$ element-wise, and $\infty$ otherwise. We define proper, lower semi-continuous penalty functions for $U$ and $V$ as $g_1(U) := \lambda\norm[2,0]{U} + \delta_{\mathbb{R}_+}(U)$ and $g_2(V) := \lambda\norm[2,0]{V} + \delta_{\mathbb{R}_+}(V)$.
To encapsulate the smooth coupling terms of the objective, we define $F_\mu(U,V)$ as:
\begin{equation}
    F_{\mu}(U,V) := \frac{1}{2}\|X-UV^\top\|_F^2 + \frac{\mu}{2}\left( \|U\|_F^2 + \|V\|_F^2 \right).
\end{equation}
Consequently, the proposed model \cref{eq:l20_regularized_nmf} and its smooth auxiliary counterpart can be recast as unconstrained optimization problems:
\begin{align}
\min_{U, V} \Psi_{\lambda,\mu}(U,V) &:= F_{\mu}(U,V) + g_1(U) + g_2(V), \label{unconstrained_l20} \\
\min_{U, V} \widetilde{F}_{\mu}(U,V) &:= F_{\mu}(U,V) + \delta_{\mathbb{R}_+}(U) + \delta_{\mathbb{R}_+}(V). 
\label{unconstrained_group}
\end{align}

\noindent\textbf{Subdifferentials of the Objective.} 
The subdifferentials of the column $\ell_{2,0}$-norm have been fully characterized in \cite[Lemma 2.3]{tao2022column}. Combining these results with standard variational calculus \cite{rockafellar1998variational}, we characterize the subdifferential of $\Psi_{\lambda,\mu}$ as follows.
\begin{proposition}\label{prop:partial}
The partial subdifferentials of $\Psi_{\lambda,\mu}(U,V)$ evaluated at $(\overline{U},\overline{V})$ are given by: 
\begin{align*}
\partial_U \Psi_{\lambda,\mu} &= \left\{ G \;\middle|\; 
  \begin{aligned}
    G_j \in {} & \left(\overline{U}\overline{V}^\top-X\right)\overline{V}_j + \mu \overline{U}_j \\
    & + \partial \delta_{\mathbb{R}_+}(\overline{U}_j), \quad \forall j \in \Jcal{\overline{U}}
  \end{aligned}
\right\}, \\
\partial_V \Psi_{\lambda,\mu} &= \left\{ H \;\middle|\; 
  \begin{aligned}
    H_j \in {} & \left(\overline{U}\overline{V}^\top-X\right)^\top \overline{U}_j + \mu \overline{V}_j \\
    & + \partial \delta_{\mathbb{R}_+}(\overline{V}_j), \quad \forall j \in \Jcal{\overline{V}}
  \end{aligned}
\right\},
\end{align*}

where $\Jcal{\overline{U}}$ and $\Jcal{\overline{V}}$ denote the index sets of the non-zero columns for $\overline{U}$ and $\overline{V}$.
\end{proposition}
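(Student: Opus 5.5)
Fixing $\overline V$ and viewing $\Psi_{\lambda,\mu}(\cdot,\overline V)$ as a function of $U$, I will write it as the sum of the smooth function $U\mapsto F_\mu(U,\overline V)$ and the proper lsc function $g_1$. The standard sum rule for a $C^1$ function plus an lsc function \cite[Exercise 8.8(c)]{rockafellar1998variational} gives $\partial_U\Psi_{\lambda,\mu}(\overline U,\overline V)=\nabla_U F_\mu(\overline U,\overline V)+\partial g_1(\overline U)$, with $\nabla_U F_\mu(\overline U,\overline V)=(\overline U\,\overline V^\top-X)\overline V+\mu\overline U$. Column $j$ of this gradient is exactly $(\overline U\,\overline V^\top-X)\overline V_j+\mu\overline U_j$. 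The whole task therefore reduces to computing $\partial g_1(\overline U)$. The $V$-block is identical after transposing $X$ and swapping roles.

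Next I will exploit separability across columns. Both $\lambda\|U\|_{2,0}=\lambda\sum_j \mathbb{1}[U_j\neq0]$ and $\delta_{\mathbb{R}_+}(U)=\sum_j\delta_{\mathbb{R}_+}(U_j)$ split over columns, so $g_1(U)=\sum_j h(U_j)$ with $h(u)=\lambda\mathbb{1}[u\ne0]+\delta_{\mathbb{R}_+}(u)$. For separable sums of lsc functions, \cite[Proposition 10.5]{rockafellar1998variational} yields $\partial g_1(\overline U)=\prod_j\partial h(\overline U_j)$. I then compute $\partial h$ in two cases.

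\textbf{Case $u\neq0$.} Near $u$, the indicator term $\mathbb{1}[\cdot\ne0]$ is locally constant. Hence $h$ agrees with $\lambda+\delta_{\mathbb{R}_+}$ on a neighborhood, and $\partial h(u)=\partial\delta_{\mathbb{R}_+}(u)=N_{\mathbb{R}_+^n}(u)$.

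\textbf{Case $u=0$.} Here $h(0)=0$, and $h(w)\ge\lambda>0$ for every $w\ne0$ near $0$. So $0$ is a strict isolated local minimizer with a jump, and the regular subgradient inequality $h(w)\ge h(0)+\langle g,w\rangle+o(\|w\|)$ holds for every $g$. Thus $\widehat\partial h(0)=\mathbb{R}^n$, which forces $\partial h(0)=\mathbb{R}^n$. This matches \cite[Lemma 2.3]{tao2022column}, where the $\ell_{2,0}$ subdifferential at a zero column is the whole space. Consequently, on $\overline J_{\overline U}$ no constraint is imposed on $G_j$. This is why the proposition only lists conditions for $j\in\Jcal{\overline U}$.

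The main obstacle is the legitimacy of the separable and sum rules for the limiting subdifferential when $g_1$ is discontinuous. The sum with a smooth function is safe. For the product rule I will argue directly. Regular subgradients of a separable sum are exactly products of regular subgradients. For limiting subgradients, I take sequences $U^k\to\overline U$ with $g_1(U^k)\to g_1(\overline U)$. Function-value convergence and lower semicontinuity of each summand force $h(U^k_j)\to h(\overline U_j)$ for every $j$. In particular, zero columns of $\overline U$ must eventually be zero in $U^k$, since otherwise the value would jump by $\lambda$. Nonzero columns stay nonzero anyway. This makes the limit pass column by column and completes the characterization.
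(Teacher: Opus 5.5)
Your proposal is correct and follows essentially the route the paper indicates just before the statement: the smooth sum rule for $F_\mu(\cdot,\overline V)$, column separability, and the $\ell_{2,0}$ subdifferential from \cite[Lemma 2.3]{tao2022column}, namely that the cardinality term is locally constant at nonzero columns and its subdifferential is the whole space at zero columns. Computing $\partial h$ directly for the combined column function $h=\lambda\mathbf{1}[\cdot\neq 0]+\delta_{\mathbb{R}_+}$, via $\widehat\partial h(0)=\mathbb{R}^n$, is a good choice. It avoids adding the $\ell_{2,0}$ subdifferential to the normal cone through a nonsmooth sum rule, and that rule's horizon qualification condition actually fails at zero columns.
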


\noindent\textbf{Kurdyka-\L{}ojasiewicz (KL) Property.} 
A proper lower semi-continuous function $f$ satisfies the KL property at $\bar{x} \in \dom \partial f$ if there exists a neighborhood $U$ of $\bar{x}$ and a concave desingularizing function $\varphi$ such that the KL inequality $\varphi'(f(x) - f(\bar{x})) \dist(0, \partial f(x)) \ge 1$ holds \cite{bolte2014proximal}. Since the $\ell_0$-norm, the $\ell_2$-norm, and the indicator function $\delta_{\mathbb{R}_+}$ are semialgebraic, the objective $\Psi_{\lambda,\mu}$ inherently possesses semialgebraic properties, making it a KL function \cite[Appendix]{bolte2014proximal}.

\noindent\textbf{Nonnegative Rank and Nuclear Norm.} 
The nonnegative rank $\rank_+(A)$ is the smallest integer $r$ such that a nonnegative matrix $A$ can be factored as $A = UV$ with $U, V \geq 0$, satisfying $\rank(A) \leq \rank_+(A) \leq \min(n, m)$. We utilize the nonnegative nuclear norm $\nu_+(A)$ \cite[Theorem 1, Theorem 4]{fawzi2015lower}, which bounds the nonnegative rank via $\rank_+(A) \geq (\nu_+(A)/\|A\|_F)^2$, and has the variational form $\nu_+(A) = \min\{ \frac12\sum_{i} (\|u_i\|_2^2 +\|v_i\|_2^2) : A = \sum_{i} u_i v_i^\top, u_i, v_i \geq 0 \}$.

\begin{theorem}\label{thm:nmf_norm_bound}
For nonnegative matrices $X = UV^T$, if $\|U_{:,k}\|_2 = \|V_{:,k}\|_2$ for all $k = 1, \dots, r$, then: 
\begin{displaymath}
\norm[*]{X} \le \nu_+(X)\le\frac{1}{2}(\|U\|_F^2 + \|V\|_F^2) \le \sqrt{r}\|X\|_F.     
\end{displaymath}
\end{theorem}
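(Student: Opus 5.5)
We prove the three inequalities in the chain separately, from left to right.

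\textbf{First inequality, $\norm[*]{X}\le\nu_+(X)$.} We use the variational form of the nuclear norm,
\[
\norm[*]{X}=\min\Bigl\{\tfrac12\sum_i(\|u_i\|_2^2+\|v_i\|_2^2) : X=\sum_i u_iv_i^\top\Bigr\},
\]
where the factors carry no sign constraint. The definition of $\nu_+(X)$ recalled in the Preliminaries is the same minimization with the extra constraint $u_i,v_i\ge 0$, so it runs over a smaller feasible set and $\norm[*]{X}\le\nu_+(X)$. To make the variational form self-contained, take any decomposition $X=\sum_i u_iv_i^\top$. The triangle inequality and AM--GM give
\[
\norm[*]{X}\le\sum_i\|u_iv_i^\top\|_*=\sum_i\|u_i\|_2\|v_i\|_2\le\tfrac12\sum_i(\|u_i\|_2^2+\|v_i\|_2^2).
\]
Only this direction is needed.

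\textbf{Second inequality, $\nu_+(X)\le\frac12(\|U\|_F^2+\|V\|_F^2)$.} This is immediate from feasibility. Since $X=UV^\top=\sum_{k=1}^r U_{:,k}V_{:,k}^\top$ with $U_{:,k},V_{:,k}\ge0$, this decomposition is admissible in the minimization defining $\nu_+$. Its objective value is
\[
\tfrac12\sum_k(\|U_{:,k}\|_2^2+\|V_{:,k}\|_2^2)=\tfrac12(\|U\|_F^2+\|V\|_F^2).
\]
This step does not use the balancing hypothesis.

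\textbf{Third inequality, $\frac12(\|U\|_F^2+\|V\|_F^2)\le\sqrt r\,\|X\|_F$.} This is the main step and the only one that uses both nonnegativity and balancing. Balancing gives $\|U_{:,k}\|_2=\|V_{:,k}\|_2$, so
\[
\tfrac12(\|U\|_F^2+\|V\|_F^2)=\sum_k\|U_{:,k}\|_2\|V_{:,k}\|_2=\sum_k\|U_{:,k}V_{:,k}^\top\|_F.
\]
Cauchy--Schwarz then bounds this by $\sqrt r\,\bigl(\sum_k\|U_{:,k}V_{:,k}^\top\|_F^2\bigr)^{1/2}$. Write $A_k:=U_{:,k}V_{:,k}^\top$. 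It remains to show $\sum_k\|A_k\|_F^2\le\|X\|_F^2$. Expanding $\|X\|_F^2=\|\sum_kA_k\|_F^2$ gives $\sum_k\|A_k\|_F^2+\sum_{k\ne l}\langle A_k,A_l\rangle$. Each cross term satisfies
\[
\langle A_k,A_l\rangle=(U_{:,k}^\top U_{:,l})(V_{:,k}^\top V_{:,l})\ge0,
\]
because all the vectors are nonnegative. Dropping the cross terms therefore gives the desired bound. The key point is that nonnegativity makes the rank-one components pairwise non-canceling in the Frobenius inner product. The conclusion follows by chaining the three inequalities.
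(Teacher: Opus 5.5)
Your proposal is correct and follows essentially the same route as the paper. The first two inequalities come from the variational forms of $\norm[*]{X}$ and $\nu_+(X)$ together with feasibility of the given factorization. The third comes from balance giving $\sum_k\|U_{:,k}V_{:,k}^\top\|_F$, nonnegative cross inner products, and Cauchy--Schwarz. Where you differ, you are only more explicit than the paper, namely in the triangle-inequality/AM--GM justification of the nuclear-norm bound.
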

The proof is given in Appendix~\ref{app:proof_nmf_norm_bound}.

\section{Structural Properties of the Proposed Model}

Although column $\ell_{2,0}$ regularization provides a direct mechanism for rank adaptation \cite{tao2022column}, its extension to NMF must account for the geometry of the nonnegative orthant and the relationship between active factor columns and the nonnegative rank of their product. We therefore characterize the relationship between the nonsmooth model $\Psi_{\lambda,\mu}$ and the smooth auxiliary model $\widetilde{F}_{\mu}$, followed by a supporting sufficient-condition result for rank recovery.

\begin{proposition}\label{prop:crit}
For any $\lambda > 0$ and $\mu > 0$, the critical point sets of $\Psi_{\lambda,\mu}$ and $\widetilde{F}_{\mu}$ are identical, i.e., $\mathrm{crit}\Psi_{\lambda,\mu} = \mathrm{crit}\widetilde{F}_{\mu}$. Furthermore, if $(\overline{U}^*,\overline{V}^*)$ is a global minimizer of $\Psi_{\lambda,\mu}$ satisfying the condition $\frac{2\lambda}{\mu} \ge \sqrt{\mathrm{rank}_+(\overline{U}^*(\overline{V}^*)^\top)}\norm[F]{\overline{U}^*(\overline{V}^*)^\top}$, it necessarily holds that:
\begin{displaymath}
\mathrm{rank}_+(\overline{U}^*(\overline{V}^*)^\top) = \norm[2,0]{\overline{U}^*} = \norm[2,0]{\overline{V}^*}.
\end{displaymath}
\end{proposition}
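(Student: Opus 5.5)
The statement has two parts, which I will prove separately: the equality of the critical sets, and the rank identity for global minimizers.

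\textbf{Equality of critical sets.} Since $F_\mu$ is smooth and $g_1(U)+g_2(V)$ is block separable, the limiting subdifferential splits as $\partial\Psi_{\lambda,\mu}(\overline U,\overline V)=\bigl(\nabla_U F_\mu+\partial g_1(\overline U)\bigr)\times\bigl(\nabla_V F_\mu+\partial g_2(\overline V)\bigr)$. I will then work with the description in \Cref{prop:partial}.

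\begin{enumerate}[label=(\roman*)]
\item For indices in $\Jcal{\overline U}$ (resp.\ $\Jcal{\overline V}$), the condition coincides with the one for $\widetilde F_\mu$.
\item For a zero column $j$, the column $\ell_{2,0}$ term makes $G_j$ (resp.\ $H_j$) completely free. The $\ell_{2,0}$ subdifferential at a zero column is the whole space, and it absorbs the normal cone.
\item Consequently $\mathrm{crit}\,\widetilde F_\mu\subseteq\mathrm{crit}\,\Psi_{\lambda,\mu}$ is immediate.
\end{enumerate}

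For the reverse inclusion, take $(\overline U,\overline V)\in\mathrm{crit}\,\Psi_{\lambda,\mu}$. The first step is a pairing claim: $\overline U_j=0$ if and only if $\overline V_j=0$. Suppose $\overline U_j=0$ but $j\in\Jcal{\overline V}$. Then the $V$-condition reads $0\in(\overline U\,\overline V^\top-X)^\top\overline U_j+\mu\overline V_j+\partial\delta_{\mathbb R_+}(\overline V_j)=\mu\overline V_j+N_{\mathbb R_+^m}(\overline V_j)$. The normal cone has zero entries wherever $\overline V_j$ is positive, so this forces $\mu(\overline V_j)_i=0$ at every positive entry. Hence $\overline V_j=0$, a contradiction.

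With the pairing in hand, every zero column $j$ has $\overline U_j=\overline V_j=0$. The partial gradients then vanish there: $(\overline U\,\overline V^\top-X)\overline V_j+\mu\overline U_j=0$, and similarly for $V$. So $0$ belongs to the gradient plus normal cone at every column, i.e.\ $(\overline U,\overline V)\in\mathrm{crit}\,\widetilde F_\mu$.

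\textbf{Rank identity for global minimizers.} Let $(\overline U^*,\overline V^*)$ be a global minimizer and set $Z=\overline U^*(\overline V^*)^\top$. A global minimizer is critical (Fermat's rule), so the pairing claim gives $s:=\norm[2,0]{\overline U^*}=\norm[2,0]{\overline V^*}$. Put $k:=\mathrm{rank}_+(Z)$. Since $Z$ is a sum of $s$ nonnegative rank-one terms, $k\le s$.

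Assume for contradiction that $k<s$, so $s\ge k+1\ge1$. I will build a competitor with the same product and fewer columns.

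\begin{enumerate}[label=(\roman*)]
\item Take a nonnegative factorization $Z=AB^\top$ with $k$ columns.
\item Rescale each pair $(A_{:,i},B_{:,i})$ by $t_i$ and $1/t_i$ so that $\|A_{:,i}\|_2=\|B_{:,i}\|_2$. This leaves every product unchanged.
\item Pad $A$ and $B$ with $r-k$ zero columns; this is possible since $k\le s\le r$.
\end{enumerate}

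By \Cref{thm:nmf_norm_bound}, $\tfrac12(\|A\|_F^2+\|B\|_F^2)\le\sqrt k\,\|Z\|_F$. The fitting term is the same for both points, so the competitor's objective minus the fitting term is at most $\mu\sqrt k\|Z\|_F+2\lambda k$.

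The hypothesis gives $\mu\sqrt k\|Z\|_F\le2\lambda$. Therefore
\[
\mu\sqrt k\|Z\|_F+2\lambda k\le 2\lambda(k+1)\le 2\lambda s<2\lambda s+\tfrac\mu2\bigl(\|\overline U^*\|_F^2+\|\overline V^*\|_F^2\bigr).
\]
The last inequality is strict because $s\ge1$ means $\overline U^*\neq0$. The competitor thus has strictly smaller objective, contradicting global optimality, so $k=s$.

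\textbf{Main obstacle.} The delicate step is the pairing claim, which rests on the sign structure of $N_{\mathbb R_+}$. The strictness of the final inequality is secured by the nonzero regularization term at the original point.
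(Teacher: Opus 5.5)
Your proof is correct and follows the paper's argument. The paper also pairs the supports through the block KKT conditions: it uses complementary slackness, $\langle(\overline U\,\overline V^\top-X)\overline V_j+\mu\overline U_j,\overline U_j\rangle=\mu\|\overline U_j\|_2^2$ when $\overline V_j=0$, which is your normal-cone sign argument. It then uses a balanced, zero-padded minimal nonnegative factorization as the competitor, bounded via \Cref{thm:nmf_norm_bound} and the hypothesis on $2\lambda/\mu$, with strictness coming from the positive Frobenius term at the minimizer. Your explicit appeal to Fermat's rule for the common support at the global minimizer, and your remark that $\mathrm{crit}\,\widetilde F_\mu\subseteq\mathrm{crit}\,\Psi_{\lambda,\mu}$ needs no pairing, only make explicit two points the paper leaves implicit.
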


The proof is given in Appendix~\ref{app:structural}. Although incorporating the squared Frobenius penalty reduces scaling ambiguities, it inherently introduces a residual bias %
. The following proposition bounds this discrepancy from below to illustrate the influence of the perturbation parameter $\mu$ %
.

\begin{proposition}\label{prop:lower bound}
Suppose the observed matrix is $X = X^* + N$, where $X^*$ represents the true signal and $N$ represents additive noise. For any non-zero critical point $(\overline{U}, \overline{V}) \in \mathrm{crit}\widetilde{F}_\mu$, the reconstruction error satisfies the lower bound:
\begin{displaymath}
\norm[F]{\overline{U}\overline{V}^\top - X^*} \ge \max(0, \mu - \norm[F]{N}).
\end{displaymath}
\end{proposition}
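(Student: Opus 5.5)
The plan is to extract a scalar identity from the first-order conditions of $\widetilde{F}_\mu$ by pairing the gradient with the point itself. This yields an inner-product inequality between the residual $X-\overline{U}\overline{V}^\top$ and the product $\overline{U}\overline{V}^\top$. Cauchy--Schwarz then gives a lower bound on the residual relative to $X$, and the triangle inequality transfers it to $X^*$.

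\textbf{Step 1: complementarity conditions.} Since $\widetilde{F}_\mu$ is smooth plus the indicator of the nonnegative orthant, $(\overline{U},\overline{V})\in\mathrm{crit}\widetilde{F}_\mu$ means
$-\nabla_U F_\mu(\overline{U},\overline{V})\in N_{\mathbb{R}_+}(\overline{U})$, and similarly for $V$. Equivalently, $\nabla_U F_\mu\ge 0$, $\overline{U}\ge0$ and $\overline{U}\odot\nabla_U F_\mu=0$. This is consistent with \Cref{prop:partial}. Taking the trace inner product of $\nabla_U F_\mu=(\overline{U}\overline{V}^\top-X)\overline{V}+\mu\overline{U}$ with $\overline{U}$ gives, with $Y:=\overline{U}\overline{V}^\top$,
\begin{equation*}
\langle Y-X,\,Y\rangle+\mu\|\overline{U}\|_F^2=0 .
\end{equation*}
The analogous computation for $V$ gives $\langle Y-X,Y\rangle+\mu\|\overline{V}\|_F^2=0$. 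Adding the two identities yields
\begin{equation*}
\langle X-Y,\,Y\rangle=\tfrac{\mu}{2}\bigl(\|\overline{U}\|_F^2+\|\overline{V}\|_F^2\bigr).
\end{equation*}

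\textbf{Step 2: $Y\neq0$ and the key inequality.} If $Y=0$, the first identity forces $\mu\|\overline{U}\|_F^2=0$ and the second forces $\mu\|\overline{V}\|_F^2=0$. Since $\mu>0$, this contradicts the assumption that the critical point is nonzero, so $Y\neq 0$. Next, $\|Y\|_F\le\|\overline{U}\|_F\|\overline{V}\|_F\le\frac12(\|\overline{U}\|_F^2+\|\overline{V}\|_F^2)$ by submultiplicativity and AM--GM; the bound $\|Y\|_*\le\frac12(\cdots)$ from \Cref{thm:nmf_norm_bound} would also suffice. Hence $\langle X-Y,Y\rangle\ge\mu\|Y\|_F$. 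Cauchy--Schwarz gives $\|X-Y\|_F\|Y\|_F\ge\mu\|Y\|_F$, and dividing by $\|Y\|_F>0$ yields $\|X-Y\|_F\ge\mu$.

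\textbf{Step 3: transfer to $X^*$.} Since $X=X^*+N$, the triangle inequality gives $\|Y-X^*\|_F\ge\|Y-X\|_F-\|N\|_F\ge\mu-\|N\|_F$. Combined with the trivial bound $\|Y-X^*\|_F\ge0$, this proves the claim.

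The only delicate point is Step 2: ensuring that a nonzero factor pair cannot yield $Y=0$, which is what justifies the division. The identities of Step 1 settle this directly. The remaining difficulty is a conceptual check that critical points of $\widetilde{F}_\mu$ are exactly the KKT points described above, which holds because the orthant indicator is convex and the smooth part is $C^1$.
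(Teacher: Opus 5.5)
Your proof is correct and follows the paper's argument: you pair the block KKT conditions with $\overline{U}$ and $\overline{V}$ to obtain $\langle Y-X,Y\rangle+\mu\|\overline{U}\|_F^2=0=\langle Y-X,Y\rangle+\mu\|\overline{V}\|_F^2$, bound $\|Y\|_F$ by the factor norms, apply Cauchy--Schwarz, and finish with the triangle inequality. There are two cosmetic differences. You combine the identities via AM--GM instead of first deducing $\|\overline{U}\|_F=\|\overline{V}\|_F$. You also derive $Y\neq0$ directly from those identities rather than citing Proposition~\ref{prop:crit} and nonnegativity; this is slightly more self-contained.
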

The proof is given in Appendix~\ref{app:structural}. Per Proposition \ref{prop:lower bound}, relying solely on the smooth objective $\widetilde{F}_\mu$ with a large $\mu$ amplifies the reconstruction error and creates a structural bottleneck. The non-smooth model $\Psi_{\lambda,\mu}$ avoids the same error floor by allowing $\mu$ to be chosen sufficiently small while component selection is controlled by the column $\ell_{2,0}$ penalty.

\begin{proposition}[Sufficient Conditions for Exact Model-Order Recovery]\label{prop:exact_recovery}
Let $X^*$ be the true underlying non-negative matrix, observed as $X = X^* + N$. Assume the non-negative rank of $X^*$ coincides with its algebraic rank, i.e., $\mathrm{rank}_+(X^*) = \mathrm{rank}(X^*) := r^*$, and the magnitude of the noise is bounded by $\norm[F]{N} \le \frac{\sigma_{r^*}(X^*)}{5}$. For any regularization parameter $\lambda$ satisfying
\begin{displaymath}
\lambda \in \left[ \frac{1}{2} \norm[F]{N}^2, \frac{1}{32}\left( \sigma_{r^*}(X^*) - \norm[F]{N} \right)^2 \right],
\end{displaymath}
and a sufficiently small $\mu > 0$, any critical point $(\overline{U},\overline{V})$ of the model $\Psi_{\lambda,\mu}$ that satisfies the energy boundedness condition 
\begin{displaymath}
\Psi_{\lambda,\mu}(\overline{U},\overline{V}) \le \frac{1}{2} \norm[F]{N}^2 + \mu \sqrt{r^*}\norm[F]{X^*} + 2\lambda r^*,
\end{displaymath}
exactly recovers the true rank, yielding $\mathrm{rank}_+(\overline{U}\overline{V}^\top) = r^*$.
\end{proposition}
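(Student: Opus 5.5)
Let $k := \norm[2,0]{\overline{U}}$ and $Y := \overline{U}\,\overline{V}^\top$. The plan is to prove $\mathrm{rank}_+(Y)=r^*$ by a two-sided argument: an energy argument gives $\mathrm{rank}(Y)\ge r^*$, and the critical-point structure together with the upper limit on $\lambda$ gives $\norm[2,0]{\overline{U}}\le r^*$.

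\textbf{Step 1 (column bookkeeping at a critical point).} By Proposition~\ref{prop:crit}, $(\overline{U},\overline{V})\in\mathrm{crit}\widetilde{F}_\mu$. Pairing the first-order condition for column $j$ of $U$ with $\overline{U}_j$, and the one for $V$ with $\overline{V}_j$, gives
\[
\mu\|\overline{U}_j\|_2^2=\langle (X-Y)\overline{V}_j,\overline{U}_j\rangle=\mu\|\overline{V}_j\|_2^2 .
\]
This yields two facts. First, the nonzero columns of $\overline{U}$ and $\overline{V}$ occur at the same indices, so $\norm[2,0]{\overline{U}}=\norm[2,0]{\overline{V}}=k$. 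Second, the columns are balanced. Consequently
\[
\Psi_{\lambda,\mu}(\overline{U},\overline{V})\ge \tfrac12\|X-Y\|_F^2+2\lambda k ,
\]
and Theorem~\ref{thm:nmf_norm_bound} applies to $Y$. Since $\mathrm{rank}(Y)\le\mathrm{rank}_+(Y)\le k$, it suffices to show (a) $\mathrm{rank}(Y)\ge r^*$ and (b) $k\le r^*$.

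\textbf{Step 2 (no underfitting).} Suppose $\mathrm{rank}(Y)\le r^*-1$. Eckart--Young gives $\|X^*-Y\|_F\ge\sigma_{r^*}(X^*)$, so $\|X-Y\|_F\ge\sigma_{r^*}(X^*)-\norm[F]{N}$. The energy bound then forces
\[
\tfrac12(\sigma_{r^*}-\norm[F]{N})^2\le \tfrac12\norm[F]{N}^2+\mu\sqrt{r^*}\norm[F]{X^*}+2\lambda r^* .
\]
Combining $\lambda\le\frac1{32}(\sigma_{r^*}-\norm[F]{N})^2$ with $\norm[F]{N}\le\sigma_{r^*}/5$ and $\mu$ small should make this impossible. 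The term $2\lambda r^*$ grows with $r^*$, though, so the constants may only close if the count is handled more carefully, for example by comparing against the energy of a true rank-$r^*$ competitor. I expect this constant-chasing, and in particular keeping the $r^*$-dependence harmless, to be the main obstacle.

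\textbf{Step 3 (no overfitting).} The energy bound together with $2\lambda k\le\Psi$ gives
\[
2\lambda k\le \tfrac12\norm[F]{N}^2+\mu\sqrt{r^*}\norm[F]{X^*}+2\lambda r^* .
\]
Using $\lambda\ge\frac12\norm[F]{N}^2$ and $\mu$ small, this yields $k\le r^*+\tfrac12+o(1)$, hence $k\le r^*$ because $k$ is an integer. The data-fit term $\frac12\|X-Y\|_F^2$ is dropped here; if the margin is too thin, I would keep it via $\|X-Y\|_F\ge 0$ and sharpen with the lower side of $\lambda$. Steps 2 and 3 together give $r^*\le\mathrm{rank}(Y)\le\mathrm{rank}_+(Y)\le k\le r^*$, which is the claim.

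\textbf{Where the argument is fragile.} Step 2 should be examined first. If it fails as stated, I would use criticality more strongly. For each active column, $\|\overline{U}_j\|_2^2\approx\|(X-Y)\overline{V}_j\|$-type identities show that dropping column $j$ changes the objective by about $-2\lambda$ plus the fit loss. Comparing with a rank-$r^*$ truncated SVD or NMF competitor, whose energy is at most $\frac12\norm[F]{N}^2+\mu\sqrt{r^*}\norm[F]{X^*}+2\lambda r^*$ by Theorem~\ref{thm:nmf_norm_bound}, explains where the energy bound comes from. The window on $\lambda$ is exactly what separates the per-component fit gain, of order $\sigma_{r^*}^2$, from the per-component penalty, of order $\norm[F]{N}^2$.
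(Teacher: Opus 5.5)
Your Steps 1 and 3 are fine. Step 3 is exactly the paper's first step: with $\mu<\lambda/(\sqrt{r^*}\|X^*\|_F)$ and $\frac12\|N\|_F^2\le\lambda$, the energy bound forces $k\le r^*$. The per-column balance and Theorem~\ref{thm:nmf_norm_bound} in Step 1 are correct but never needed. The gap is Step 2, which you flagged but did not close. You discarded the penalty $2\lambda k$, so the term $2\lambda r^*$ on the right is left uncancelled. For $\lambda$ at the top of the window, $2\lambda r^*=\frac{r^*}{16}(\sigma_{r^*}(X^*)-\|N\|_F)^2\ge\frac12(\sigma_{r^*}(X^*)-\|N\|_F)^2$ once $r^*\ge 8$, so your displayed inequality is not a contradiction. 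Restoring the penalty is not enough by itself. If $\rank(Y)=r^*-\delta$, the penalty cancels only $2\lambda(r^*-\delta)$, and the leftover $2\lambda\delta$ grows with the deficit $\delta$, whereas your bound $\|X^*-Y\|_F\ge\sigma_{r^*}(X^*)$ does not. Your fallback does not help either. A critical point need not beat a truncated-SVD or NMF competitor, and column-dropping comparisons are arguments about minimizers. The energy bound is the only global information you have about $(\overline U,\overline V)$.

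The missing idea is to make the fit loss scale with $\delta$ as well. Since $k\ge\rank_+(Y)\ge\rank(Y)=r^*-\delta$, Eckart--Young applied to $X$ gives
\[
\Psi_{\lambda,\mu}(\overline U,\overline V)\ge\tfrac12\sum_{i>r^*-\delta}\sigma_i^2(X)+2\lambda(r^*-\delta)\ge\tfrac{\delta}{2}s^2+2\lambda(r^*-\delta),
\]
where $s=\sigma_{r^*}(X)\ge\sigma_{r^*}(X^*)-\|N\|_F$ by Weyl. Alternatively, you can keep your route through $X^*$ if you use the full tail $\|X^*-Y\|_F^2\ge\delta\,\sigma_{r^*}^2(X^*)$ together with $\sqrt{\delta}\,\sigma_{r^*}(X^*)-\|N\|_F\ge\sqrt{\delta}\,(\sigma_{r^*}(X^*)-\|N\|_F)$. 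Subtracting the energy threshold, the term $2\lambda(r^*-\delta)$ cancels, leaving $\delta(\frac{s^2}{2}-2\lambda)-\frac12\|N\|_F^2-\mu\sqrt{r^*}\|X^*\|_F$. The bound $\|N\|_F\le\sigma_{r^*}(X^*)/5$ gives $\|N\|_F\le s/4$, the upper end of the window gives $\lambda\le s^2/32$, and $\mu\sqrt{r^*}\|X^*\|_F<\lambda$. Hence this quantity exceeds $\frac{7}{16}s^2-\frac{1}{32}s^2-\frac{1}{32}s^2=\frac38 s^2>0$, contradicting the energy bound. This is the paper's argument. The $\lambda$-window compares the per-component fit gain $s^2/2$ with the per-component penalty $2\lambda$, uniformly in $\delta$ and $r^*$.
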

\begin{remark}
The condition $\mathrm{rank}_+(X^*) = \mathrm{rank}(X^*)$ is a standard assumption in the identifiability of exact NMF. As noted in \cite[Chapter 4.2]{gillis2020nonnegative}, this equality naturally holds in most practical data analysis applications, making it a well-justified premise for our exact recovery guarantees.
\end{remark}
The proof is given in Appendix~\ref{app:proof_exact_recovery}.

Finally, we characterize the topological correspondence between the local minimizers of the proposed non-smooth model $\Psi_{\lambda,\mu}$ and those of the smooth auxiliary model $\widetilde{F}_\mu$ %
.

\begin{proposition}\label{prop:local-minimizer}
Let $\lambda>0$ and $\mu>0$. If $(\overline{U},\overline{V})$ is a (strong) local minimizer of $\widetilde{F}_\mu$, then it concurrently serves as a (strong) local minimizer of $\Psi_{\lambda,\mu}$. Conversely, if $(\overline{U},\overline{V})$ is a non-zero (strong) local minimizer of $\Psi_{\lambda,\mu}$ with active support $J = \mathcal{J}(\overline{U})$, then the truncated submatrix pair $(\overline{U}_J,\overline{V}_J)$ is intrinsically a (strong) local minimizer of $\widetilde{F}_\mu$ restricted to the lower-dimensional space $\mathbb{R}^{n\times|J|}\times\mathbb{R}^{m\times|J|}$.
\end{proposition}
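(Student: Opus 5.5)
The plan treats the two directions separately. Both rest on one elementary property of the column $\ell_{2,0}$ quantity: it is lower semicontinuous and locally nondecreasing. For every $(\overline{U},\overline{V})$ there is $\delta_0>0$ such that every $(U,V)\in\mathbb{B}_{\delta_0}(\overline{U},\overline{V})$ has $\mathcal{J}_{U}\supseteq\mathcal{J}_{\overline{U}}$ and $\mathcal{J}_{V}\supseteq\mathcal{J}_{\overline{V}}$. It suffices to take $\delta_0<\min_{j\in\mathcal{J}_{\overline{U}}\cup\mathcal{J}_{\overline{V}}}\min(\|\overline{U}_j\|_2,\|\overline{V}_j\|_2)$ over the nonzero columns. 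Hence $\|U\|_{2,0}+\|V\|_{2,0}\ge\|\overline{U}\|_{2,0}+\|\overline{V}\|_{2,0}$ on this ball.

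\emph{Forward direction.} Suppose $(\overline{U},\overline{V})$ is a local minimizer of $\widetilde{F}_\mu$ with radius $\delta_1$, and take $\delta=\min(\delta_0,\delta_1)$. For any nonnegative $(U,V)$ in $\mathbb{B}_{\delta}(\overline{U},\overline{V})$,
\[
\Psi_{\lambda,\mu}(U,V)=\widetilde{F}_\mu(U,V)+\lambda(\|U\|_{2,0}+\|V\|_{2,0})\ge\widetilde{F}_\mu(\overline{U},\overline{V})+\lambda(\|\overline{U}\|_{2,0}+\|\overline{V}\|_{2,0})=\Psi_{\lambda,\mu}(\overline{U},\overline{V}).
\]
Infeasible points have value $+\infty$. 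In the strong case, where $\widetilde{F}_\mu(U,V)\ge\widetilde{F}_\mu(\overline{U},\overline{V})+c\|(U,V)-(\overline{U},\overline{V})\|_F^2$, the same quadratic term carries over unchanged, so $(\overline{U},\overline{V})$ is a strong local minimizer of $\Psi_{\lambda,\mu}$.

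\emph{Converse direction.} Let $(\overline{U},\overline{V})\ne 0$ be a local minimizer of $\Psi_{\lambda,\mu}$ and write $J=\mathcal{J}(\overline{U})$.
\begin{enumerate}
\item First I will show that the supports are paired, $\mathcal{J}_{\overline{U}}=\mathcal{J}_{\overline{V}}$. If some column has $\overline{U}_j\ne0$ but $\overline{V}_j=0$, then setting $U_j=0$ leaves $UV^\top$ unchanged, strictly decreases $\frac{\mu}{2}\|U\|_F^2$, and decreases the $\ell_{2,0}$ count by one.
\item This modification alone is not a small perturbation. Instead I will shrink $U_j$ to $(1-t)\overline{U}_j$ for small $t>0$: the product is unchanged and the Frobenius term strictly decreases, while the $\ell_{2,0}$ count stays fixed. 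This contradicts local minimality, and the symmetric argument handles the case $\overline{V}_j\ne0$, $\overline{U}_j=0$. So the supports coincide and all columns of $\overline{U}_J$ and $\overline{V}_J$ are nonzero.
\item Now compare with the restricted problem. Take $(U_J,V_J)\in\mathbb{R}_+^{n\times|J|}\times\mathbb{R}_+^{m\times|J|}$ within distance $\delta<\delta_0$ of $(\overline{U}_J,\overline{V}_J)$, and embed it by padding zero columns on $\bar J$. The embedded point lies in the $\delta$-ball around $(\overline{U},\overline{V})$ in the full space, and its product and Frobenius norms agree with those of the restricted pair.
\item Since $\delta<\delta_0$, every column indexed by $J$ stays nonzero, and the padded columns are zero. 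The $\ell_{2,0}$ term therefore equals exactly $2\lambda|J|$, the same as at $(\overline{U},\overline{V})$.
\item Local minimality of $\Psi_{\lambda,\mu}$ then gives $\widetilde{F}_\mu(U_J,V_J)\ge\widetilde{F}_\mu(\overline{U}_J,\overline{V}_J)$, since the distances coincide. The strong version transfers verbatim, with the same quadratic growth term.
\end{enumerate}

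The main obstacle is the pairing-of-supports step. Without it, the restricted pair would carry zero columns in one factor, and the statement would be phrased with the wrong $J$. I expect the scaling argument to settle it. The only care needed is that $\mu>0$ is essential there, and that nonnegativity is preserved under the scaling, which it is since $t\in(0,1)$. A second, minor point is that "non-zero" in the statement ensures $J\neq\emptyset$, so the restricted space is nontrivial.
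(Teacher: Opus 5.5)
Your proof is correct and follows the paper's argument. The forward direction uses local monotonicity of the column counts, and the converse uses a common support plus zero-padding, which is an isometry that keeps the penalty at $2\lambda|J|$; quadratic growth transfers unchanged in both directions.

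The only variation is how you get $\mathcal{J}_{\overline{U}}=\mathcal{J}_{\overline{V}}$. You use the shrinking perturbation $(1-t)\overline{U}_j$, whereas the paper invokes Proposition~\ref{prop:crit}: a local minimizer is a critical point, whose supports are paired by complementary slackness. Both rely on $\mu>0$ in the same way.

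One small fix: define $\delta_0$ as the smallest norm among the nonzero columns of $\overline{U}$ and $\overline{V}$. As written, $\min(\|\overline{U}_j\|_2,\|\overline{V}_j\|_2)$ vanishes on an unpaired column, and you use $\delta_0$ before pairing is established.
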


The proof of Proposition \ref{prop:local-minimizer}, which parallels \cite[Proposition 2.7]{tao2022column}, is given in Appendix~\ref{app:proof_local_minimizer}.

\section{iPALM Method for the \texorpdfstring{$\ell_{2,0}$}{L2,0}-Regularized NMF Model}\label{sec:iPALM}

\subsection{Standard iPALM Algorithm and Convergence Analysis}\label{subsec:standard_ipalm}
Fix any $(U,V)\in\mathbb{R}^{n\times r}\times\mathbb{R}^{m\times r}$. The block-gradients of $F_{\mu}$ and their respective Lipschitz constants are given by:
{\small
\begin{align*}
    \nabla_U F_{\mu}(U,V) &= (UV^\top - X)V + \mu U, \quad \tau_{\!V}\!:= \norm[2]{V^\top V} + \mu, \\
    \nabla_V F_{\mu}(U,V) &= (VU^\top - X^\top)U + \mu V, \quad \tau_{\!U}\!:= \norm[2]{U^\top U} + \mu.
\end{align*}
}%
By the descent lemma, we can majorize the smooth part $F_\mu$ and apply an inertial extrapolation step to accelerate the alternating minimization. This leads to the proposed iPALM method detailed in Algorithm \ref{alg:iPALM}.

\begin{algorithm}[!t]
 \caption{\label{alg:iPALM}{\bf iPALM Method for solving \cref{eq:l20_regularized_nmf}}}
 \textbf{Initialization:} 
 Choose $(U^0,V^0) \ge 0$. Set $(U^{-1},V^{-1}) := (U^0,V^0)$. Choose bounds $\bar{\beta} \in [0,1)$, initial $\beta_0 \in [0, \bar{\beta}]$, and $0 < \alpha_1 \le \alpha_2$. Set $k:=0$.\\
\textbf{while} stopping conditions are not satisfied \textbf{do}
 \begin{itemize}[leftmargin=*, nosep]
  \item[\bf 1.] \textbf{Update $U$:} Let $\tau_{\!V^k} = \norm[2]{(V^k)^\top V^k} + \mu$, select $\gamma_{1,k}\in\tau_{\!V^{k}}+{[\alpha_1,\alpha_2]}$. \\
                Compute extrapolated $\widetilde{U}^k := U^k+\beta_k(U^k-U^{k-1})$ and update:
                \begin{multline}\label{Uk-subprob}
                 U^{k+1} \in \mathop{\arg\min}_{U\in\mathbb{R}^{n\times r}} \Big\{\langle\nabla_U F_{\mu}(\widetilde{U}^k,V^k), U\rangle \\+ \frac{\gamma_{1,k}}{2}\norm[F]{U\!-\!\widetilde{U}^k}^2 + g_1(U)\Big\}.
                \end{multline}

  \item[\bf 2.] \textbf{Update $V$:} Let $\tau_{\!U^{k+1}} = \norm[2]{(U^{k+1})^\top U^{k+1}} + \mu$, select $\gamma_{2,k}\in\tau_{\!U^{k+1}}+{[\alpha_1,\alpha_2]}$. \\
                Compute extrapolated $\widetilde{V}^k := V^k+\beta_k(V^k-V^{k-1})$ and update:
                \begin{multline}\label{Vk-subprob}
                 V^{k+1} \in \mathop{\arg\min}_{\!V\in\mathbb{R}^{m\times r}} \Big\{\langle\nabla_V F_{\mu}(U^{k+1},\widetilde{V}^k), V\rangle \\+ \frac{\gamma_{2,k}}{2}\norm[F]{V\!-\!\widetilde{V}^k}^2 + g_2(V)\Big\}.
                \end{multline}

  \item[\bf 3.] Choose inertial parameter $\beta_{k+1} \in [0, \bar{\beta}]$. Let $k\leftarrow k+1$.
 \end{itemize}
 \textbf{end while}
\end{algorithm}
\begin{remark}\label{remark1-iPALM}
 The subproblems \eqref{Uk-subprob} and \eqref{Vk-subprob} admit efficient closed-form solutions by combining an element-wise non-negativity truncation with a column-wise hard thresholding for the $\ell_{2,0}$-norm. Let $P^k = \widetilde{U}^k - \frac{1}{\gamma_{1,k}}\nabla_U F_{\mu}(\widetilde{U}^k,V^k)$ and $Q^k = \widetilde{V}^k - \frac{1}{\gamma_{2,k}}\nabla_V F_{\mu}(U^{k+1},\widetilde{V}^k)$ be the gradient descent updates at the extrapolated points. Denoting the element-wise maximums as $\hat{P}^k = \max\{P^k, 0\}$ and $\hat{Q}^k = \max\{Q^k, 0\}$, the $i$-th columns ($i=1,\ldots,r$) of the updates are explicitly given by:
 \begin{align*}
    U_i^{k+1} &= \begin{cases} 
      \hat{P}_i^k, & \text{if } \norm[2]{\hat{P}_i^k} > \sqrt{\frac{2\lambda}{\gamma_{1,k}}}, \\ 
      0, & \text{otherwise},
    \end{cases} \\
    V_i^{k+1} &= \begin{cases} 
      \hat{Q}_i^k, & \text{if } \norm[2]{\hat{Q}_i^k} > \sqrt{\frac{2\lambda}{\gamma_{2,k}}}, \\ 
      0, & \text{otherwise}.
    \end{cases}
 \end{align*}
 This decoupled, closed-form solution ensures that the subproblems can be solved in linear time, maintaining the computational efficiency of the algorithm.
\end{remark}

To establish the global convergence of Algorithm \ref{alg:iPALM} under the Kurdyka-{\L}ojasiewicz (KL) framework, we first introduce several auxiliary parameters. For any iteration $k \in \mathbb{N}$, we define:
\begin{equation}\label{eq:alpha_defs}
    \alpha_{1,k} := \gamma_{1,k} - \tau_{V^{k}} \quad \text{and} \quad \alpha_{2,k} := \gamma_{2,k} - \tau_{U^{k+1}}.
\end{equation}

Central to our convergence analysis is the construction of a surrogate potential function. For suitably chosen constants $\rho_1, \rho_2 \in \left(0, \frac{\alpha_1}{2\alpha_2}\right)$, the potential function $\Xi$ is formulated as follows:
{\small
\begin{equation}\label{eq:potential_function}
    \Xi(U,V,U',V') := \Psi(U,V) + \frac{\rho_1\alpha_2}{2}\|U-U'\|_F^2 + \frac{\rho_2\alpha_2}{2}\|V-V'\|_F^2.
\end{equation}
}%
The subsequent proposition and its proof framework build on \cite{tao2022column,pock2016inertial}; the details are given in Appendix~\ref{app:proof_kl_ipalm}.

\begin{proposition}\label{prop:KL_properties-iPALM}
Let $\{(U^k,V^k)\}_{k\in\mathbb{N}}$ be the sequence generated by Algorithm \ref{alg:iPALM}, and let $\tau := \sup_{k\in\mathbb{N}}\max(\tau_{V^k},\tau_{U^{k+1}}) < \infty$. Assume that the inertial parameters $\beta_k$ are chosen such that $\beta_k \in [0, \hat{\beta}]$, where the uniform upper bound $\hat{\beta}$ satisfies:
{\small
\begin{equation*}
    0 \le \hat{\beta} < \min\left(\sqrt{\frac{\rho_1(1-\rho_1)\alpha_2}{(1-\rho_1)\tau+\alpha_2}}, \sqrt{\frac{\rho_2(1-\rho_2)\alpha_2}{(1-\rho_2)\tau+\alpha_2}}\right).
\end{equation*}
}%
Then, the following statements hold:
\begin{itemize}
    \item [(i)] \textbf{Sufficient Decrease:} The sequence of potential function values strictly decreases. Specifically, defining the non-negative coefficients
    {\small
    \begin{align*}
        \nu_{1,k} &:= \frac{(\alpha_{1,k}-\rho_1\alpha_2)(\rho_1\alpha_2-\tau_{V^{k}}\beta_k^2)-\alpha_{1,k}^2\beta_k^2}{2(\alpha_{1,k}-\rho_1\alpha_2)}, \\
        \nu_{2,k} &:= \frac{(\alpha_{2,k}-\rho_2\alpha_2)(\rho_2\alpha_2-\tau_{U^{k+1}}\beta_k^2)-\alpha_{2,k}^2\beta_k^2}{2(\alpha_{2,k}-\rho_2\alpha_2)},
    \end{align*}
    }%
the following inequality is satisfied for each $k \in \mathbb{N}$:
\begin{equation}\label{eq:sufficient_decrease}
\begin{multlined}
        \Xi(U^{k+1},V^{k+1},U^k,V^k) - \Xi(U^k,V^k,U^{k-1},V^{k-1}) \\
        \le -\nu_{1,k}\|U^{k}-U^{k-1}\|_F^2 - \nu_{2,k}\|V^k-V^{k-1}\|_F^2.
    \end{multlined}    
\end{equation}

    \item [(ii)] \textbf{Boundedness and Accumulation:} The generated sequence $\{(U^k,V^k)\}_{k\in\mathbb{N}}$ is uniformly bounded. Consequently, its set of accumulation points, denoted by $\Upsilon$, is nonempty and compact. Furthermore, the sequence $\{\Xi(U^{k},V^{k},U^{k-1},V^{k-1})\}_{k\in\mathbb{N}}$ monotonically converges to a finite limit $\varpi^*$, with $\Xi \equiv \varpi^*$ on the entire set $\Upsilon$.

    \item [(iii)] \textbf{Subgradient Bound:} There exist positive constants $c_1$ and $c_2$ bounding the subgradient such that for each $k \in \mathbb{N}$:
    \begin{multline}\label{eq:subgradient_bound}
        {\rm dist}\big(0,\partial\Xi_{\lambda,\mu}(U^{k+1},V^{k+1},U^k,V^k)\big)\\
        \begin{aligned}
            &\leq c_1\big(\|U^{k+1}-U^{k}\|_F + \|U^k-U^{k-1}\|_F\big) \\
        &\quad + c_2\big(\|V^{k+1}-V^{k}\|_F + \|V^k-V^{k-1}\|_F\big). 
        \end{aligned}
    \end{multline}
    Here, the constants are given by $c_1 := \tau+\overline{\gamma}+2\rho_1\alpha_2$ and $c_2 := c_f+2\tau+\overline{\gamma}+2\rho_2\alpha_2$, where we denote $c_f := \sup_{k\in\mathbb{N}}\{\|U^k(V^k)^{\top}-X\|\}$.
\end{itemize}
\end{proposition}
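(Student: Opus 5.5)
We follow the standard inertial PALM template of \cite{pock2016inertial}, adapted to the column $\ell_{2,0}$ setting as in \cite{tao2022column}. The proof has three parts.

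\textbf{Part (i): sufficient decrease.} We treat the $U$-block first.
\begin{itemize}
\item Optimality of $U^{k+1}$ in \eqref{Uk-subprob}, compared with the feasible candidate $U=U^k$, gives
$\langle\nabla_U F_\mu(\widetilde U^k,V^k),U^{k+1}-U^k\rangle+\frac{\gamma_{1,k}}{2}\|U^{k+1}-\widetilde U^k\|_F^2+g_1(U^{k+1})\le \frac{\gamma_{1,k}}{2}\|U^k-\widetilde U^k\|_F^2+g_1(U^k)$.
\item The descent lemma for $F_\mu(\cdot,V^k)$ has modulus $\tau_{V^k}$. We apply it at $U^k$ and also use $\tau_{V^k}$-Lipschitz continuity of the gradient between $\widetilde U^k$ and $U^k$, where $\|\widetilde U^k-U^k\|_F=\beta_k\|U^k-U^{k-1}\|_F$.
\item Expanding $\|U^{k+1}-\widetilde U^k\|^2$ and applying Young's inequality to the cross terms yields a bound of the form
$\Psi(U^{k+1},V^k)\le\Psi(U^k,V^k)-\frac{\alpha_{1,k}-\epsilon}{2}\|U^{k+1}-U^k\|^2+c(\beta_k,\epsilon)\|U^k-U^{k-1}\|^2$.
\item The Young parameter is chosen so that exactly $\rho_1\alpha_2$ of the new-step coefficient is transferred to the inertial term $\frac{\rho_1\alpha_2}{2}\|U^{k+1}-U^k\|^2$ in $\Xi$. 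This is why the denominator $\alpha_{1,k}-\rho_1\alpha_2$ appears in $\nu_{1,k}$.
\end{itemize}
The $V$-block is handled the same way with $U^{k+1}$ fixed and modulus $\tau_{U^{k+1}}$. Adding the two blocks gives \eqref{eq:sufficient_decrease}.

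Nonnegativity of $\nu_{1,k}$ and $\nu_{2,k}$ has to be checked separately. Since $\alpha_{i,k}\in[\alpha_1,\alpha_2]$ and $\rho_i<\alpha_1/(2\alpha_2)$, we have $\alpha_{i,k}-\rho_i\alpha_2>0$. The numerator is decreasing in $\beta_k^2$, and we bound it below using $\alpha_{i,k}\le\alpha_2$ and $\tau_{\cdot}\le\tau$. I expect the resulting condition to reduce to $\beta_k^2\big((1-\rho_i)\tau+\alpha_2\big)<\rho_i(1-\rho_i)\alpha_2$, which is exactly the stated bound $\hat\beta$. Getting the Young constants to reproduce this precise expression is the main bookkeeping obstacle. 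If the direct choice does not match, I would parametrize the Young weight and optimize it.

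\textbf{Part (ii): boundedness and limits.}
\begin{itemize}
\item By (i), $\Psi(U^k,V^k)\le\Xi(U^k,V^k,U^{k-1},V^{k-1})\le\Xi(U^0,V^0,U^0,V^0)=\Psi(U^0,V^0)$.
\item Since $\Psi\ge\frac{\mu}{2}(\|U\|_F^2+\|V\|_F^2)$, the iterates are bounded, so $\Upsilon$ is nonempty and compact.
\item $\Xi$ is bounded below by $0$ and monotonically nonincreasing, so it converges to a finite limit $\varpi^*$. Summability also gives $\|U^{k+1}-U^k\|\to0$ and $\|V^{k+1}-V^k\|\to0$.
\item To show $\Xi\equiv\varpi^*$ on $\Upsilon$, take a subsequence converging to $(\bar U,\bar V)$. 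Lower semicontinuity gives one inequality. For the other, compare the subproblem value at $U^{k+1}$ with its value at the candidate $\bar U$ and take limsup; this gives $\limsup g_1(U^{k+1})\le g_1(\bar U)$, and similarly for $V$. The inertial terms vanish in the limit, so the limit equals $\Psi(\bar U,\bar V)$.
\end{itemize}

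\textbf{Part (iii): subgradient bound.}
\begin{itemize}
\item The optimality condition of \eqref{Uk-subprob} gives
$-\nabla_UF_\mu(\widetilde U^k,V^k)-\gamma_{1,k}(U^{k+1}-\widetilde U^k)\in\hat\partial g_1(U^{k+1})$.
Adding $\nabla_UF_\mu(U^{k+1},V^{k+1})$ produces an element $A^{k+1}\in\partial_U\Psi(U^{k+1},V^{k+1})$, using the separable subdifferential of Proposition~\ref{prop:partial}.
\item $\|A^{k+1}\|$ is bounded by $\|\nabla_UF_\mu(U^{k+1},V^{k+1})-\nabla_UF_\mu(U^{k+1},V^k)\|+\|\nabla_UF_\mu(U^{k+1},V^k)-\nabla_UF_\mu(\widetilde U^k,V^k)\|+\overline\gamma(\|U^{k+1}-U^k\|+\beta_k\|U^k-U^{k-1}\|)$.
\item The $V$-variation term is controlled via $(U V^\top-X)V$, which introduces $c_f$ and $\tau$ factors. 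This is why $c_f$ appears in $c_2$.
\item The $V$-block gives an analogous element with no cross term.
\item The partial derivatives of the quadratic terms of $\Xi$ contribute $\pm\rho_i\alpha_2(U^{k+1}-U^k)$ in both the $U$ and $U'$ slots, which accounts for the $2\rho_i\alpha_2$ terms.
\item Collecting terms and using $\beta_k\le1$ gives \eqref{eq:subgradient_bound}.
\end{itemize}
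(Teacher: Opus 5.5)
\textbf{Parts (ii) and (iii)} follow the paper: coercivity from $\mu>0$, the limsup comparison of subproblem values for constancy of $\Xi$ on $\Upsilon$, and a subgradient built from proximal optimality plus the $\pm\rho_i\alpha_2(U^{k+1}-U^k)$ terms.

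\textbf{The gap is in part (i)}, and it is exactly the bookkeeping obstacle you flagged. Write $e=U^{k+1}-U^k$ and $d=U^k-U^{k-1}$. You apply the descent lemma at $U^k$ and bound $\langle\nabla_UF_\mu(U^k,V^k)-\nabla_UF_\mu(\widetilde U^k,V^k),e\rangle\le\tau_{V^k}\beta_k\|d\|_F\|e\|_F$ by Lipschitz continuity. Adding the prox comparison with $U=U^k$ then leaves
\[
\Psi(U^{k+1},V^k)-\Psi(U^k,V^k)\le-\tfrac{\alpha_{1,k}}{2}\|e\|_F^2+(\alpha_{1,k}+2\tau_{V^k})\beta_k\|e\|_F\|d\|_F .
\]
After adding the potential terms, even the best Young weight (maximizing over $\|e\|_F$) only gives the coefficient
$\frac{\rho_1\alpha_2(\alpha_{1,k}-\rho_1\alpha_2)-(\alpha_{1,k}+2\tau_{V^k})^2\beta_k^2}{2(\alpha_{1,k}-\rho_1\alpha_2)}$.
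This is strictly smaller than $\nu_{1,k}$, since $(\alpha_{1,k}+2\tau_{V^k})^2>\tau_{V^k}(\alpha_{1,k}-\rho_1\alpha_2)+\alpha_{1,k}^2$.

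In the worst case $\alpha_{1,k}=\alpha_2$, $\tau_{V^k}=\tau$, its nonnegativity requires $\beta_k^2\le\rho_1(1-\rho_1)\alpha_2^2/(\alpha_2+2\tau)^2$. That is strictly below $\hat{\beta}^2$ whenever $\tau>0$, by a factor of order $\alpha_2/\tau$ when $\tau$ is large. So under the stated hypothesis your route neither reproduces $\nu_{1,k}$ nor secures the monotonicity of $\Xi$ that (ii) relies on. Your fallback of optimizing the Young weight cannot fix this, because the loss happens in the Lipschitz step, before Young is applied.

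\textbf{The missing ingredient is block convexity.} For fixed $V^k$, $F_\mu(\cdot,V^k)$ is a convex quadratic with Hessian $(V^k)^\top V^k+\mu I\succeq 0$. Combine the descent lemma centered at $\widetilde U^k$ with the gradient inequality $F_\mu(\widetilde U^k,V^k)\le F_\mu(U^k,V^k)+\langle\nabla_UF_\mu(\widetilde U^k,V^k),\widetilde U^k-U^k\rangle$. This gives the three-point bound
\[
F_\mu(U^{k+1},V^k)\le F_\mu(U^k,V^k)+\langle\nabla_UF_\mu(\widetilde U^k,V^k),e\rangle+\tfrac{\tau_{V^k}}{2}\|e-\beta_kd\|_F^2 .
\]
Adding the prox comparison yields the paper's bound
$\frac{\gamma_{1,k}}{2}\beta_k^2\|d\|_F^2-\frac{\alpha_{1,k}}{2}\|e-\beta_kd\|_F^2=\frac{\tau_{V^k}\beta_k^2}{2}\|d\|_F^2-\frac{\alpha_{1,k}}{2}\|e\|_F^2+\alpha_{1,k}\beta_k\langle e,d\rangle$.
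Young with weight $(\alpha_{1,k}-\rho_1\alpha_2)/\alpha_{1,k}^2$ then gives exactly $\nu_{1,k}$.

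\textbf{A smaller point on your nonnegativity check.} The numerator of $\nu_{1,k}$ is not monotone in $\alpha_{1,k}$ (at $\beta_k=0$ it increases), so bounding it below via $\alpha_{1,k}\le\alpha_2$ is not the right mechanism. Instead, $\nu_{1,k}\ge 0$ holds once $\beta_k^2\le f(\alpha_{1,k})$, where
$f(\alpha):=\frac{\rho_1\alpha_2(\alpha-\rho_1\alpha_2)}{\tau(\alpha-\rho_1\alpha_2)+\alpha^2}$.
Now $f'(\alpha)$ has the sign of $\alpha(2\rho_1\alpha_2-\alpha)$, which is negative on $[\alpha_1,\alpha_2]$ precisely because $\rho_1<\alpha_1/(2\alpha_2)$. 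Its minimum $f(\alpha_2)$ is the stated $\hat{\beta}^2$, and the strict inequality gives uniform positivity.
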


To tailor the analysis to our algorithms, we adapt the framework of \cite[Theorem 1]{bolte2014proximal} and \cite[Theorem 3.1]{liu_refined_2019} by introducing an intermediate residual $\mathcal{R}^k$, leading to the following unified convergence result; see Appendix~\ref{app:proof_unified_convergence}.

\begin{theorem}[Unified Global Convergence]\label{thm:unified_global_convergence}
Let $\{X^k\}$ be a bounded sequence, and assume that $\psi$ is a KL function whose value is finite and constant on the compact cluster set of $\{X^k\}$. If there exist constants $a, b, c > 0$ such that the sequence satisfies: 
(i) $\psi(X^k) - \psi(X^{k+1}) \ge a (\mathcal{R}^k)^2$; 
(ii) $\|W^{k+1}\|_F \le b (\mathcal{R}^k + \mathcal{R}^{k+1})$ for $W^{k+1} \in \partial \psi(X^{k+1})$; and 
(iii) $\|\Delta Z^k\|_F \le c \mathcal{R}^k$, 
then $\{Z^k\}$ has finite length and converges to a single critical point.
\end{theorem}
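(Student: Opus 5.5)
We follow the standard KL argument of \cite[Theorem 1]{bolte2014proximal}, with the successive difference replaced by the residual $\mathcal{R}^k$. We read $Z^k$ as the component of $X^k$ whose increments $\Delta Z^k = Z^{k+1}-Z^k$ are controlled by (iii), for instance $Z^k=(U^k,V^k)$ when $X^k=(U^k,V^k,U^{k-1},V^{k-1})$.

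\textbf{Step 1: preliminaries on values and the cluster set.} Condition (i) makes $\{\psi(X^k)\}$ nonincreasing. By boundedness and the constancy of $\psi$ on the compact cluster set $\Omega$, it converges to the common value $\psi^*$. Summing (i) gives $\sum_k(\mathcal{R}^k)^2<\infty$, so $\mathcal{R}^k\to0$. If $\psi(X^{K})=\psi^*$ for some $K$, then (i) forces $\mathcal{R}^k=0$ for all $k\ge K$. By (iii) the sequence $Z^k$ is then eventually constant and we are done. We may therefore assume $\psi(X^k)>\psi^*$ for all $k$. Since $\dist(X^k,\Omega)\to0$, the uniformized KL property on the compact set $\Omega$ \cite[Lemma 6]{bolte2014proximal} yields $\varepsilon,\eta>0$ and a single concave $\varphi$ such that, for all large $k$,
\[
\varphi'\big(\psi(X^k)-\psi^*\big)\,\dist\big(0,\partial\psi(X^k)\big)\ge1 .
\]

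\textbf{Step 2: key inequality.} Write $\Delta_k:=\varphi(\psi(X^k)-\psi^*)-\varphi(\psi(X^{k+1})-\psi^*)$. Concavity of $\varphi$, the KL inequality, (ii) applied at index $k$ (which bounds $\dist(0,\partial\psi(X^k))\le b(\mathcal{R}^{k-1}+\mathcal{R}^k)$), and (i) together give
\[
\Delta_k\ge \varphi'\big(\psi(X^k)-\psi^*\big)\big(\psi(X^k)-\psi(X^{k+1})\big)\ge \frac{a(\mathcal{R}^k)^2}{b(\mathcal{R}^{k-1}+\mathcal{R}^k)} .
\]
Hence $(\mathcal{R}^k)^2\le \tfrac{b}{a}\Delta_k(\mathcal{R}^{k-1}+\mathcal{R}^k)$. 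Applying $2\sqrt{st}\le s+t$ gives
\[
2\mathcal{R}^k\le \tfrac{\mathcal{R}^{k-1}+\mathcal{R}^k}{2}+\tfrac{2b}{a}\Delta_k,
\qquad\text{i.e.}\qquad
\tfrac32\mathcal{R}^k\le\tfrac12\mathcal{R}^{k-1}+\tfrac{2b}{a}\Delta_k .
\]

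\textbf{Step 3: finite length and convergence.} Summing from $K$ to $N$, the $\Delta_k$ telescope and are bounded by $\varphi(\psi(X^K)-\psi^*)$. This gives $\sum_k\mathcal{R}^k<\infty$. By (iii), $\sum_k\|\Delta Z^k\|_F\le c\sum_k\mathcal{R}^k<\infty$. So $\{Z^k\}$ has finite length, is Cauchy, and converges to some $Z^*$.

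\textbf{Step 4: criticality of the limit.} By (ii) and $\mathcal{R}^k\to0$ we have $W^{k}\to0$. Together with $\psi(X^k)\to\psi^*=\psi(X^*)$, where $X^*$ is the limit point containing $Z^*$, the closedness of the limiting subdifferential under $\psi$-attentive convergence shows that $0\in\partial\psi(X^*)$. When the extra coordinates of $X^k$ are lagged copies of $Z^k$, they converge to the same limit, and the critical point of $\psi$ corresponds to a critical point in the $Z$ variable.

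\textbf{Main obstacle.} The delicate points are the off-by-one shift in (ii), which couples $\mathcal{R}^{k-1}$ and $\mathcal{R}^{k}$ and is absorbed by the $\tfrac32$ versus $\tfrac12$ coefficients, and making sure the uniform KL neighborhood is entered and never left. The latter is automatic here because $\dist(X^k,\Omega)\to0$ by boundedness, so no trapping argument is needed.
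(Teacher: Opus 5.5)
Your proposal is correct and follows the same route as the paper's proof. Both run the standard Bolte--Sabach--Teboulle KL argument: uniformized KL on the cluster set, concavity of $\varphi$, an AM--GM absorption of the two-residual subgradient bound, telescoping to get $\sum_k\mathcal R^k<\infty$, then (iii) for finite length and closedness of $\partial\psi$ for criticality. Your key estimate $\Delta_k\ge a(\mathcal R^k)^2/\bigl(b(\mathcal R^{k-1}+\mathcal R^k)\bigr)$, taken from KL and (ii) at $X^k$, is indexed the way concavity actually delivers it; the paper's displayed version with $\mathcal R^k+\mathcal R^{k+1}$ in the denominator is off by one, though harmlessly, since the same summation goes through.
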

By invoking Theorem~\ref{thm:unified_global_convergence} with $Z^k = (U^k,V^k)$, $X^k = (Z^k, Z^{k-1})$, $\psi(X^k) = \Xi(U^{k},V^{k},U^{k-1},V^{k-1})$, $\Delta Z^k = Z^k - Z^{k-1}$ and $\mathcal{R}^k = \|U^k-U^{k-1}\|_F + \|V^k-V^{k-1}\|_F$, the global convergence of \cref{alg:iPALM} to a critical point $Z^*$ is immediately established.

\subsection{Scale-Balanced iPALM: Motivation and Algorithm Design}\label{subsec:scale_balanced_ipalm}
To establish a profound motivation for our proposed algorithmic framework, it is crucial to analyze the intrinsic numerical behavior of the primary data fidelity term $\frac{1}{2}\|X-UV^T\|_F^2$. As formally demonstrated in classical non-negative matrix factorization (NMF) paradigms \cite{gillis2020nonnegative}, the reconstruction error exhibits an inherent scale-invariance with respect to the rank-one factors. Specifically, for any positive diagonal scaling matrix $\Lambda = \text{diag}(\alpha_1, \dots, \alpha_r)$ with $\alpha_k > 0$, replacing $U$ with $U\Lambda$ and $V$ with $V\Lambda^{-1}$ leaves the matrix product $UV^T$ completely unaltered. However, first-order optimization schemes—such as proximal gradient methods and alternating linearized algorithms—are notoriously sensitive to the spectral properties induced by such scaling.

In an alternating update framework, when solving the subproblem with respect to $V$ while keeping $U$ fixed, the local curvature and the corresponding Lipschitz constant of the gradient are directly dictated by the eigenvalues of the matrix $U^TU$. Symmetrically, the conditioning of the subproblem for updating $U$ depends heavily on $V^TV$. If the columns of $U$ or $V$ suffer from severe scale imbalance—meaning certain columns possess disproportionately larger $\ell_2$-norms than others—the proxy matrices $U^TU$ and $V^TV$ become highly ill-conditioned. This spectral distortion forces the gradient steps to become heavily mismatched, resulting in severely sluggish convergence or numerical stagnation, even when the overall reconstruction error appears theoretically stable.

To eliminate this numerical vulnerability embedded within the data fidelity term, we introduce an explicit scale balancing ($L_2$ normalization) step into the iteration sequence. By strictly enforcing $\|U_{:,k}\|_2 = \|V_{:,k}\|_2$ for each $k = 1, \dots, r$, we dynamically rectify the ill-posed scaling of the rank-one components. This operation substantially improves the conditioning of the underlying subproblems without altering the reconstruction fidelity or manipulating the support structure governed by the $\ell_{2,0}$-norm. Incorporating this crucial stabilization mechanism, we present our novel scale-balanced algorithm, as detailed in Algorithm \ref{alg:iPALM_scale}.
\begin{algorithm}[!t]
 \caption{\label{alg:iPALM_scale}{\bf Scale-Balanced iPALM Method for solving \cref{eq:l20_regularized_nmf}}}
 \textbf{Initialization:} Same as Algorithm \ref{alg:iPALM}.\\
\textbf{while} stopping conditions are not satisfied \textbf{do}
 \begin{itemize}[leftmargin=*, nosep]
  \item[\bf 1.] \textbf{Extrapolate \& Update:} Compute intermediate variables $\hat{U}^{k+1}$ and $\hat{V}^{k+1}$ by executing Steps 1 and 2 of Algorithm \ref{alg:iPALM} respectively.
  
  \item[\bf 2.] \textbf{Scale Balancing:} For each column $i = 1, \dots, r$:
                \begin{itemize}[leftmargin=1.5em, nosep]
                    \item \textbf{if} $\|\hat{U}_i^{k+1}\|_2 \|\hat{V}_i^{k+1}\|_2 = 0$, set $U_i^{k+1} = \mathbf{0}$ and $V_i^{k+1} = \mathbf{0}$;
                    \item \textbf{otherwise}, compute $S^k_{ii} = \sqrt{\|\hat{V}_i^{k+1}\|_2 / \|\hat{U}_i^{k+1}\|_2}$ and update:
                    \begin{equation*}
                        U_i^{k+1} \leftarrow \hat{U}_i^{k+1}S^k_{ii}, \quad V_i^{k+1} \leftarrow \hat{V}_i^{k+1}(S^k_{ii})^{-1}.
                    \end{equation*}
                \end{itemize}

  \item[\bf 3.] Choose inertial parameter $\beta_{k+1} \in [0, \bar{\beta}]$. Let $k\leftarrow k+1$.
 \end{itemize}
 \textbf{end while}
\end{algorithm}
\begin{remark}\label{remark2-scale}

{\bf(a) Scaling Optimality:} By the AM-GM inequality, the structural penalty $\norm[F]{U}^2 + \norm[F]{V}^2$ is globally minimized when $\norm[2]{U_i} = \norm[2]{V_i}, \forall i$. The scaling matrix $S^k$ strictly enforces this equilibrium without altering the data fidelity product $UV^\top$ or the $\ell_{2,0}$ sparsity.
 
 {\bf(b) Inertial Consistency:} Retrospectively scaling historical variables $(U^k, V^k)$ would inherently disrupt their already-achieved equilibrium and artificially distort the momentum trajectory. Applying $S^k$ exclusively to current variables preserves rigorous extrapolation dynamics for convergence analysis.

 {\bf(c)} {\bf Strict Co-Sparsity.} When either $\hat{U}_i^{k+1}$ or $\hat{V}_i^{k+1}$ is a zero vector, the rank-1 component $\hat{U}_i^{k+1} (\hat{V}_i^{k+1})^\top$ inherently evaluates to zero. Setting both columns to zero explicitly prevents division by zero in the scaling step and eliminates redundant Frobenius penalties without altering the data fidelity term.
\end{remark}

\subsection{Theoretical Properties and Global Convergence of Scale-Balanced iPALM}\label{subsec:convergence_scale_balanced}

Before establishing the global convergence under the Kurdyka-{\L}ojasiewicz (KL) framework, we explicitly analyze the mathematical properties of the Scale Balancing step introduced in Algorithm \ref{alg:iPALM_scale}. The following lemma provides an analytical bound on the physical displacement caused by the scaling step on a single column pair, rigorously covering the singular cases where the variables may be truncated to zero.

\begin{lemma}[Column-wise Bounded Jump of Scale Balancing]\label{lemma:column_jump}
Let $x \in \mathbb{R}^n$ and $y \in \mathbb{R}^m$ be any paired column vectors satisfying the balanced scale condition $\|x\|_2 = \|y\|_2 = \rho$. Suppose these vectors undergo a gradient update to intermediate states $\hat{x} = x + \Delta x$ and $\hat{y} = y + \Delta y$. Let $x^+$ and $y^+$ be the scale-balanced output vectors obtained according to Step 2 and Remark \ref{remark2-scale}(c) of Algorithm \ref{alg:iPALM_scale}. The total displacement is strictly bounded by:
\begin{displaymath}
    \|x^+ - \hat{x}\|_2 + \|y^+ - \hat{y}\|_2 \le \|\Delta x\|_2 + \|\Delta y\|_2.
\end{displaymath}
\end{lemma}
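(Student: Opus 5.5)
The plan is to reduce everything to the norms $a:=\|\hat{x}\|_2$ and $b:=\|\hat{y}\|_2$. The goal is to show that in every case the left-hand side equals $|a-b|$, and then to bound $|a-b|$ using the balanced condition $\|x\|_2=\|y\|_2=\rho$ together with the reverse triangle inequality. Since both $x^+$ and $\hat{x}$ lie on the same ray, as do $y^+$ and $\hat{y}$, each displacement is a difference of norms and the computation is one-dimensional.

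\emph{Generic case $ab>0$.} Here $S=\sqrt{b/a}$, so $x^+=S\hat{x}$ and $y^+=S^{-1}\hat{y}$. Then $\|x^+-\hat{x}\|_2=|S-1|\,a=|\sqrt{ab}-a|=\sqrt{a}\,|\sqrt{b}-\sqrt{a}|$ and $\|y^+-\hat{y}\|_2=|S^{-1}-1|\,b=|\sqrt{ab}-b|=\sqrt{b}\,|\sqrt{a}-\sqrt{b}|$. Summing gives $(\sqrt{a}+\sqrt{b})\,|\sqrt{a}-\sqrt{b}|=|a-b|$.

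\emph{Degenerate case $ab=0$.} By Remark~\ref{remark2-scale}(c), $x^+=y^+=0$. The displacement is then $a+b$, which equals $|a-b|$ because one of $a,b$ vanishes. So the identity holds in all cases.

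It remains to bound $|a-b|$. Because $\|x\|_2=\|y\|_2=\rho$, we have $|a-b|\le |a-\rho|+|\rho-b|$. The reverse triangle inequality gives $|a-\rho|=\bigl|\|x+\Delta x\|_2-\|x\|_2\bigr|\le\|\Delta x\|_2$, and likewise $|b-\rho|\le\|\Delta y\|_2$. Combining these yields the claim.

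The only step needing care is the degenerate branch. There the output is the zero vector rather than a rescaling, so the displacement formula must be checked separately. The observation that $a+b=|a-b|$ when $ab=0$ makes this branch fit the same final estimate, and the bound is uniform in $\rho$, including $\rho=0$.
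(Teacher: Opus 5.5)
Your proof is correct and follows essentially the same route as the paper: you reduce to $a=\|\hat x\|_2$, $b=\|\hat y\|_2$, show the displacement equals $(\sqrt a+\sqrt b)|\sqrt a-\sqrt b|=|a-b|$ when $ab>0$, and bound $|a-b|$ by the reverse triangle inequality through $\rho$. The only difference is cosmetic: you note that $a+b=|a-b|$ when $ab=0$, which handles the degenerate branch uniformly, whereas the paper bounds that case directly via $\rho\le\|\Delta x\|_2$ and $b\le\rho+\|\Delta y\|_2$.
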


With the column-wise properties established, we generalize these bounds to the entire matrix structure. The following lemma consolidates the structural deviations introduced by the normalization step into two forms: squared and non-squared Frobenius norms.

\begin{lemma}[Matrix-wise Displacement Bounds]\label{lemma:matrix_jump}
Assuming the matrices from the previous iteration are well-balanced ($\|U_{:,i}^k\|_2 = \|V_{:,i}^k\|_2$ for $i = 1, \dots, r$), the total displacement caused by the scaling step across the entire matrices is bounded by:
\begin{align}
 \begin{multlined}
   \|U^{k+1} - \hat{U}^{k+1}\|_F^2 + \|V^{k+1} - \hat{V}^{k+1}\|_F^2 \\\le 2 \left( \|\hat{U}^{k+1} - U^k\|_F^2 + \|\hat{V}^{k+1} - V^k\|_F^2 \right), \label{eq:matrix_jump_sq}  
\end{multlined} \\
\begin{multlined}
     \|U^{k+1} - \hat{U}^{k+1}\|_F + \|V^{k+1} - \hat{V}^{k+1}\|_F \\\le \sqrt{2} \left( \|\hat{U}^{k+1} - U^k\|_F + \|\hat{V}^{k+1} - V^k\|_F \right). \label{eq:matrix_jump_l1}
\end{multlined}
\end{align}
\end{lemma}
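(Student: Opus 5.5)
The plan is to apply Lemma~\ref{lemma:column_jump} column by column and then sum over columns. For each $i=1,\dots,r$, set $x=U^k_{:,i}$, $y=V^k_{:,i}$, $\hat{x}=\hat{U}^{k+1}_{:,i}$, $\hat{y}=\hat{V}^{k+1}_{:,i}$, so that $\Delta x=\hat{U}^{k+1}_{:,i}-U^k_{:,i}$ and $\Delta y=\hat{V}^{k+1}_{:,i}-V^k_{:,i}$. The standing assumption $\|U^k_{:,i}\|_2=\|V^k_{:,i}\|_2$ is exactly the balanced hypothesis of Lemma~\ref{lemma:column_jump}. This also covers zero columns, where $\rho=0$. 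The balancing in Step 2 of Algorithm~\ref{alg:iPALM_scale} acts column-wise and independently, so $x^+=U^{k+1}_{:,i}$ and $y^+=V^{k+1}_{:,i}$. Lemma~\ref{lemma:column_jump} therefore gives, for every $i$,
\begin{equation*}
a_i:=\|U^{k+1}_{:,i}-\hat{U}^{k+1}_{:,i}\|_2+\|V^{k+1}_{:,i}-\hat{V}^{k+1}_{:,i}\|_2\le \|\Delta x_i\|_2+\|\Delta y_i\|_2=:b_i .
\end{equation*}

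For the squared bound \eqref{eq:matrix_jump_sq}, I will square the column inequality. Using $p^2+q^2\le (p+q)^2$ on the left and $(p+q)^2\le 2(p^2+q^2)$ on the right gives
\begin{equation*}
\|U^{k+1}_{:,i}-\hat{U}^{k+1}_{:,i}\|_2^2+\|V^{k+1}_{:,i}-\hat{V}^{k+1}_{:,i}\|_2^2\le 2\left(\|\Delta x_i\|_2^2+\|\Delta y_i\|_2^2\right).
\end{equation*}
Summing over $i$ then yields \eqref{eq:matrix_jump_sq}, because squared Frobenius norms are sums of squared column norms.

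For the non-squared bound \eqref{eq:matrix_jump_l1}, I will go through \eqref{eq:matrix_jump_sq}. Write $A=\|U^{k+1}-\hat U^{k+1}\|_F$, $B=\|V^{k+1}-\hat V^{k+1}\|_F$, $C=\|\hat U^{k+1}-U^k\|_F$ and $D=\|\hat V^{k+1}-V^k\|_F$. Then
\begin{equation*}
A+B\le\sqrt{2}\sqrt{A^2+B^2}\le \sqrt{2}\cdot\sqrt{2}\sqrt{C^2+D^2}.
\end{equation*}
This only gives the constant $2$, so I need a sharper route. Summing the column inequalities directly gives $\sum_i a_i\le\sum_i b_i$, but the column $\ell_1$-type sums are not Frobenius norms. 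The better route is the vector Minkowski/Cauchy-Schwarz form. Since $a_i\le b_i$ and $a_i,b_i\ge0$, we get $\sum_i a_i^2\le\sum_i b_i^2$. The left side satisfies $\sum_i a_i^2\ge A^2+B^2$, and the right side satisfies $\sum_i b_i^2\le 2(C^2+D^2)$. Hence $A+B\le \sqrt{2}\sqrt{A^2+B^2}\le\sqrt2\sqrt{\sum_i b_i^2}$.

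Now I bound $\sqrt{\sum_i b_i^2}=\|(b_i)_i\|_2$. Here $b_i=\|\Delta x_i\|_2+\|\Delta y_i\|_2$, so by the triangle inequality in $\mathbb{R}^r$,
\begin{equation*}
\|(b_i)\|_2\le \|(\|\Delta x_i\|_2)_i\|_2+\|(\|\Delta y_i\|_2)_i\|_2=C+D.
\end{equation*}
This gives $A+B\le\sqrt2(C+D)$, which is \eqref{eq:matrix_jump_l1}.

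The main obstacle is recognizing that the $\sqrt2$ constant in \eqref{eq:matrix_jump_l1} comes from pairing Cauchy–Schwarz on the left (two terms) with Minkowski on the right, rather than passing through \eqref{eq:matrix_jump_sq}, which loses a factor. The remaining care is to check that Lemma~\ref{lemma:column_jump} really applies to every column, including columns zeroed by thresholding or by the co-sparsity rule of Remark~\ref{remark2-scale}(c). This relies on the balanced hypothesis holding for all columns of $(U^k,V^k)$, which the lemma assumes.
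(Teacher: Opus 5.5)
Your proof is correct and follows the paper's own argument. The paper also applies Lemma~\ref{lemma:column_jump} column by column and uses $p^2+q^2\le(p+q)^2\le 2(p^2+q^2)$ for the squared bound. For the non-squared bound it uses the same chain you arrive at: Cauchy--Schwarz on the left, componentwise monotonicity $a_i\le b_i$ in the Euclidean norm, and Minkowski on the right.
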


Proofs of \Cref{lemma:column_jump,lemma:matrix_jump} are collected in Appendix~\ref{app:balancing}. Combining these lemmas with \Cref{prop:KL_properties-iPALM} yields \Cref{prop:iPALM_scale}; its proof is given in Appendix~\ref{app:proof_scale_ipalm}.
\begin{proposition}\label{prop:iPALM_scale}
Let $\{(U^k,V^k)\}_{k\in\mathbb{N}}$ be the sequence generated by Algorithm \ref{alg:iPALM_scale}, and let $\tau := \sup_{k\in\mathbb{N}}\max(\tau_{V^k},\tau_{\hat{U}^{k+1}}) < \infty$. For brevity, setting $\rho=\rho_1=\rho_2\in(0,\frac{\alpha_1}{6\alpha_2})$, assume that the inertial parameters $\beta_k$ are chosen such that $\beta_k \in [0, \widetilde{\beta}]$, where the uniform upper bound $\widetilde{\beta}$ satisfies:
\begin{equation*}
    0 \le \widetilde{\beta} < \sqrt{\frac{\rho(1-\rho)\alpha_2}{(1-\rho)\tau+\alpha_2}}.
\end{equation*}
Then, the following statements hold:
\begin{itemize}
    \item [(i)] \textbf{Sufficient Decrease:} The sequence of potential function values strictly decreases. Specifically, defining the non-negative coefficients
    {\small
    \begin{align*}
        \nu_{1,k} &:= \frac{(\alpha_{1,k}-3\rho\alpha_2)(3\rho\alpha_2-\tau_{V^{k}}\beta_k^2)-\alpha_{1,k}^2\beta_k^2}{2(\alpha_{1,k}-3\rho\alpha_2)}, \\
        \nu_{2,k} &:= \frac{(\alpha_{2,k}-3\rho\alpha_2)(3\rho\alpha_2-\tau_{U^{k+1}}\beta_k^2)-\alpha_{2,k}^2\beta_k^2}{2(\alpha_{2,k}-3\rho\alpha_2)},
    \end{align*}
    }%
the following inequality is satisfied for each $k\in\mathbb{N}$:
    \begin{multline}\label{eq:pro3-diff}
        \Xi(U^{k+1},V^{k+1},U^k,V^k) - \Xi(U^k,V^k,U^{k-1},V^{k-1})  \\
        \le -\nu_{1,k}\|U^{k}-U^{k-1}\|_F^2 - \nu_{2,k}\|V^k-V^{k-1}\|_F^2.
    \end{multline}

    \item [(ii)] \textbf{Boundedness and Accumulation:} The generated sequence $\{(U^k,V^k)\}_{k\in\mathbb{N}}$ is bounded. Consequently, the set of accumulation points of the sequence $\{(U^{k},V^{k},U^{k-1},V^{k-1})\}_{k\in\mathbb{N}}$, denoted by $\Upsilon$, is nonempty and compact. Furthermore, the sequence $\{\Xi(U^{k},V^{k},U^{k-1},V^{k-1})\}_{k\in\mathbb{N}}$ converges to a finite limit $\varpi^*$, with $\Xi \equiv \varpi^*$ on the entire set $\Upsilon$.

    \item [(iii)] \textbf{Subgradient Bound:} There exist positive constants $c_1$ and $c_2$ bounding the subgradient such that for each $k\in\mathbb{N}$:
    {\small
    \begin{multline*}
        {\rm dist}\big(0,\partial\Xi_{\lambda,\mu}(U^{k+1},V^{k+1},U^k,V^k)\big)\\
        \begin{aligned}
           &\leq c_1\big(\|U^{k+1}-\hat{U}^{k+1}\|_F+\|\hat{U}^{k+1}-U^{k}\|_F+\|U^k-U^{k-1}\|_F\big)\\
        &\quad+c_2\big(\|V^{k+1}-\hat{V}^{k+1}\|_F+\|\hat{V}^{k+1}-V^{k}\|_F+\|V^k-V^{k-1}\|_F\big).  
        \end{aligned}    
    \end{multline*}
    }%
Here, the constants are given by $c_1 := \tau+\overline{\gamma}+2\rho\alpha_2+c_f$ and $c_2 := c_f+2\tau+\overline{\gamma}+2\rho\alpha_2$, where we denote $c_f := \sup_{k\in\mathbb{N}}\{\|U^k(V^k)^{\top}-X\|\}$.
\end{itemize}
\end{proposition}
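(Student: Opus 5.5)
The plan is to reduce to the analysis of \Cref{prop:KL_properties-iPALM} and treat the balancing step as a perturbation that never increases $\Psi$. The step from $(\hat U^{k+1},\hat V^{k+1})$ to $(U^{k+1},V^{k+1})$ has two properties we will use. First, by Remark~\ref{remark2-scale}(a),(c) it preserves every rank-one product $\hat U_i^{k+1}(\hat V_i^{k+1})^\top$. It leaves the supports unchanged or removes a column pair whose product is zero. It also does not increase $\|U\|_F^2+\|V\|_F^2$ (AM--GM), and it does not increase $\|U\|_{2,0}+\|V\|_{2,0}$. Hence $\Psi(U^{k+1},V^{k+1})\le\Psi(\hat U^{k+1},\hat V^{k+1})$. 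Second, its displacement is controlled by \Cref{lemma:matrix_jump}, whose balancedness hypothesis holds by induction from iteration~1. We can balance the initial pair too, or note that the first step is just one extra term. \Cref{lemma:matrix_jump} gives \eqref{eq:matrix_jump_sq} and \eqref{eq:matrix_jump_l1}.

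\textbf{Step (i).} Rerun the one-step descent estimate behind \cref{eq:sufficient_decrease}, since $\hat U^{k+1},\hat V^{k+1}$ are exactly iPALM steps from $(U^k,V^k)$ with extrapolation. Use the descent lemma and optimality in \eqref{Uk-subprob}--\eqref{Vk-subprob}, expanding the inertial cross terms with Young's inequality. This gives
\[
\Psi(\hat U^{k+1},\hat V^{k+1})\le \Psi(U^k,V^k)-\tfrac{\alpha_{1,k}}{2}\|\hat U^{k+1}-U^k\|_F^2-\tfrac{\alpha_{2,k}}{2}\|\hat V^{k+1}-V^k\|_F^2+(\text{inertial terms in }\|U^k-U^{k-1}\|_F,\|V^k-V^{k-1}\|_F).
\]
The potential $\Xi$ contains $\frac{\rho\alpha_2}{2}\|U^{k+1}-U^k\|_F^2$, not $\|\hat U^{k+1}-U^k\|_F^2$. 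We therefore bound
\[
\|U^{k+1}-U^k\|_F^2\le 2\|U^{k+1}-\hat U^{k+1}\|_F^2+2\|\hat U^{k+1}-U^k\|_F^2,
\]
and likewise for $V$. Then \eqref{eq:matrix_jump_sq} gives the combined bound $\le 6(\|\hat U^{k+1}-U^k\|_F^2+\|\hat V^{k+1}-V^k\|_F^2)$. This is exactly where the factor $3\rho$ appears in place of $\rho$, and why $\rho<\alpha_1/(6\alpha_2)$ is required: it keeps $\alpha_{i,k}-3\rho\alpha_2>0$ after absorbing these terms. Completing the square in $\|\hat U^{k+1}-U^k\|_F$ against the inertial terms, as in \Cref{prop:KL_properties-iPALM}, yields \eqref{eq:pro3-diff} with the stated $\nu_{i,k}$. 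Nonnegativity of $\nu_{i,k}$ follows from the bound on $\widetilde\beta$, as before.

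\textbf{Step (ii).} Since $\Xi$ is nonincreasing and bounded below, and $\Psi$ is coercive ($\mu>0$), the iterates $(U^k,V^k)$ stay bounded. By \Cref{lemma:matrix_jump} so do $\hat U^{k+1},\hat V^{k+1}$, which justifies $\tau<\infty$. Summing \eqref{eq:pro3-diff} gives $\|U^k-U^{k-1}\|_F\to0$ and $\|V^k-V^{k-1}\|_F\to0$, and with them $\|\hat U^{k+1}-U^k\|_F\to0$. Constancy of $\Xi$ on $\Upsilon$ needs continuity of $\Psi$ along convergent subsequences, and the $\ell_{2,0}$ term makes this delicate. \emph{This is the main obstacle.} Lower semicontinuity gives one inequality. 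For the other, compare with $\Psi(\hat U^{k+1},\cdot)$ via the prox optimality at the limit point, as in \cite{bolte2014proximal}. Then pass through the balancing step using $\Psi(U^{k+1},V^{k+1})\le\Psi(\hat U^{k+1},\hat V^{k+1})$ together with the vanishing displacement.

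\textbf{Step (iii).} Write the optimality conditions of the subproblems at $\hat U^{k+1}$ and $\hat V^{k+1}$. Transfer them to $(U^{k+1},V^{k+1})$, noting that the balanced point has the same column supports and that $\partial\delta_{\mathbb R_+}$ is invariant under positive column scaling. The resulting gradient mismatches are Lipschitz-bounded by $\|U^{k+1}-\hat U^{k+1}\|_F$ and $\|V^{k+1}-\hat V^{k+1}\|_F$. The extra $c_f$ in $c_1$ comes from bounding $(U V^\top-X)(V^{k+1}-\hat V^{k+1})$. Add the derivative of the proximity terms of $\Xi$, giving $2\rho\alpha_2$ times the differences. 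Finally, \eqref{eq:matrix_jump_l1} feeds the result into \Cref{thm:unified_global_convergence} with $\mathcal R^k$ built from $\|\hat U^{k+1}-U^k\|_F+\|\hat V^{k+1}-V^k\|_F+\|U^k-U^{k-1}\|_F+\|V^k-V^{k-1}\|_F$.
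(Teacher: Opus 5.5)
Your outline follows the paper's proof in structure. Balancing does not increase $\Psi$. \Cref{lemma:matrix_jump} together with $\|a+b\|_F^2\le2\|a\|_F^2+2\|b\|_F^2$ controls the new potential terms. In (iii), the proximal optimality conditions transfer through positive column scaling. There is, however, a genuine gap in Step (ii): you assert that $\|\hat U^{k+1}-U^k\|_F\to0$ comes ``with'' $\|U^k-U^{k-1}\|_F\to0$, and this does not follow. The balancing map can undo a large pre-balancing displacement. If $\hat U^{k+1}_i=sU^k_i$ and $\hat V^{k+1}_i=s^{-1}V^k_i$ for some $s\neq1$, then $(U^{k+1},V^{k+1})=(U^k,V^k)$, while $\hat U^{k+1}-U^k$ is not small. \Cref{lemma:matrix_jump} only bounds the balancing jump by $\|\hat U^{k+1}-U^k\|_F$, never the reverse. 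The same reversal underlies your claim that boundedness of $\hat U^{k+1}$ follows from that lemma; it actually follows from the explicit prox-gradient formula, and $\tau<\infty$ is assumed anyway. Because you complete the square so that the $\|\hat U^{k+1}-U^k\|_F^2$ terms cancel entirely, \eqref{eq:pro3-diff} carries no information about them. Yet you need them to vanish in two places. Your constancy argument compares $g_1(\hat U^{k_j})$ with $g_1$ at the limit via prox optimality, which requires $\hat U^{k_j}$ to converge to the same limit as $U^{k_j}$. The residual you pass to \Cref{thm:unified_global_convergence} also contains exactly these terms.

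The paper avoids this by not spending the whole coefficient of $\|\hat U^{k+1}-U^k\|_F^2$. In \eqref{eq:sup_scale_delta} it observes that, after expanding $\|\hat U^{k+1}-\widetilde U^k\|_F^2$, the coefficients of $\|\hat U^{k+1}-U^k\|_F^2$ and $\|\hat V^{k+1}-V^k\|_F^2$ stay below a negative constant. One way to do the expansion is $\|\hat e-\beta_k d\|_F^2\ge(1-\epsilon)\|\hat e\|_F^2-(\epsilon^{-1}-1)\beta_k^2\|d\|_F^2$, with $\hat e=\hat U^{k+1}-U^k$ and $d=U^k-U^{k-1}$. This gives $\Delta\Xi^k\le -c\,(\text{intermediate differences})^2+C\,(\text{previous steps})^2$. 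Summing against $\sum_k\|U^k-U^{k-1}\|_F^2<\infty$ from (i) then yields summability of the intermediate differences. That is the actual role of $\rho<\alpha_1/(6\alpha_2)$, equivalently $3\rho\alpha_2<\alpha_1/2$, rather than keeping $\alpha_{i,k}-3\rho\alpha_2>0$.

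Your own constant count is also inconsistent. The combined factor $6$ places $3\rho\alpha_2$ on each $\|\hat U^{k+1}-U^k\|_F^2$, so what must stay positive is $\alpha_{i,k}-6\rho\alpha_2$. Moreover, the $\|U^k-U^{k-1}\|_F^2$ term receives only $\rho\alpha_2$ from the potential, so the stated $\nu_{i,k}$ do not come out ``exactly''. The paper's sketch is equally loose on these constants, so this point is secondary. For the constancy of $\Xi$ on $\Upsilon$ alone, you could instead use the threshold. Every nonzero balanced column has norm $\sqrt{\|\hat U_i^{k+1}\|_2\|\hat V_i^{k+1}\|_2}>\sqrt{2\lambda/\overline\gamma}$, so supports along a convergent subsequence eventually match the limit's. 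The summability of the intermediate differences is still required for the KL step.
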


By invoking Theorem~\ref{thm:unified_global_convergence} with $Z^k = (U^k,V^k)$, $X^k = (Z^k, Z^{k-1})$, $\psi(X^k) = \Xi(U^{k},V^{k},U^{k-1},V^{k-1})$, $\Delta Z^k = Z^k - Z^{k-1}$ and $\mathcal{R}^k = \|U^{k}-\hat{U}^{k}\|_F+\|\hat{U}^{k}-U^{k-1}\|_F + \|V^{k}-\hat{V}^{k}\|_F+\|\hat{V}^{k}-V^{k-1}\|_F$, the global convergence of \cref{alg:iPALM_scale} to a critical point $Z^*$ is immediately established.

\section{A Proximal Active-Set Method based on P-Stationarity}

To further accelerate convergence and rigorously tackle the exact $\ell_{2,0}$-norm sparsity under non-negativity constraints, we propose a proximal active-set method. This approach elegantly bridges the theoretical concept of Proximal-stationary (P-stationary) points with the computational efficiency of subspace optimization. 

\subsection{P-Stationary Point and Active Set Formulation}

 Let $F_{\mu}(U,V)$ denote the smooth coupling terms of the objective, and let $\gamma_{1,k} > 0$ denote the step-size parameter (typically bounded by the Lipschitz constant of $\nabla_U F_{\mu}$). A point $U$ satisfies the P-stationary condition \cite{zhang2021recursion,zhou2021newton} if:
\begin{equation}\label{eq:p_stationary}
    U = \prox_{g_1}^{1/\gamma_{1,k}}\left( U - \frac{1}{\gamma_{1,k}} \nabla_U F_{\mu}(U, V) \right).
\end{equation}

Analyzing this proximal map column by column gives the following necessary and sufficient conditions; the short proof is included in Appendix~\ref{app:pstationarity}.

\begin{theorem}[P-Stationarity Conditions]\label{thm:p_stationarity}
Given the penalty parameter $\lambda > 0$ and step-size $\gamma_{1,k} > 0$, a matrix $U \in \mathbb{R}_+^{n \times r}$ is a P-stationary point satisfying \eqref{eq:p_stationary} if and only if for each column index $i \in \{1,\dots,r\}$, exactly one of the following two conditions holds:
\begin{enumerate}
    \item[\bf (a)] {\bf Inactive (Zero) Column:} $U_i = 0$, and the negative part of the gradient is bounded by the threshold:
    \begin{equation*}
        \norm[2]{\left[ -\nabla_U F_{\mu}(U, V)_i \right]_+} \leq \sqrt{2\lambda \gamma_{1,k}};
    \end{equation*}
    \item[\bf (b)] {\bf Active (Non-zero) Column:} $U_i \neq 0$, satisfying the non-negative KKT conditions $U_i \geq 0$, $\nabla_U F_{\mu}(U, V)_i \geq 0$, and $U_i \odot \nabla_U F_{\mu}(U, V)_i = 0$, alongside the norm bound:
    \begin{equation*}
        \norm[2]{U_i} \geq \sqrt{\frac{2\lambda}{\gamma_{1,k}}}.
    \end{equation*}
\end{enumerate}
\end{theorem}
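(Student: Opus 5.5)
The plan is to use the column-separability of the proximal map and then read off the fixed-point equation column by column, splitting on whether $U_i=0$. Since $g_1(U)=\lambda\norm[2,0]{U}+\delta_{\mathbb{R}_+}(U)=\sum_{i=1}^r\big(\lambda\,\mathbb{1}[U_i\neq 0]+\delta_{\mathbb{R}_+}(U_i)\big)$ and $\frac{\gamma_{1,k}}{2}\norm[F]{U-P}^2$ splits over columns, the proximal problem decouples. Writing $\gamma:=\gamma_{1,k}$, $G:=\nabla_U F_\mu(U,V)$ and $P:=U-G/\gamma$, the fixed-point equation \eqref{eq:p_stationary} holds if and only if, for every $i$, $U_i$ lies in
\begin{equation*}
\argmin_{u\ge 0}\ \Big\{\frac{\gamma}{2}\norm[2]{u-P_i}^2+\lambda\,\mathbb{1}[u\neq 0]\Big\}.
\end{equation*}
Because $\norm[2,0]{\cdot}$ makes the proximal map set-valued, I read ``$=$'' in \eqref{eq:p_stationary} as membership in this argmin.

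First I would solve this scalar-block problem exactly, recovering the formula of Remark~\ref{remark1-iPALM} including ties.
\begin{itemize}
\item Restricted to $u\ge 0$, the unique minimizer of $\norm[2]{u-P_i}^2$ is $[P_i]_+$. Its value is $\norm[2]{P_i}^2-\norm[2]{[P_i]_+}^2$, by splitting coordinates by the sign of $P_i$.
\item So the best nonzero candidate costs $\frac{\gamma}{2}\big(\norm[2]{P_i}^2-\norm[2]{[P_i]_+}^2\big)+\lambda$, provided $[P_i]_+\neq 0$. The zero candidate costs $\frac{\gamma}{2}\norm[2]{P_i}^2$.
\item Hence the argmin is $\{0\}$ if $\norm[2]{[P_i]_+}<\sqrt{2\lambda/\gamma}$. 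It is $\{[P_i]_+\}$ if $\norm[2]{[P_i]_+}>\sqrt{2\lambda/\gamma}$. It is $\{0,[P_i]_+\}$ in the case of equality.
\end{itemize}
One subtlety needs care. If $[P_i]_+=0$, no nonzero $u\ge 0$ attains the restricted infimum. Then $0$ is the unique minimizer, which is consistent with the threshold rule.

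Case $U_i=0$. Here $P_i=-G_i/\gamma$, so $[P_i]_+=[-G_i]_+/\gamma$. The condition $0\in\argmin$ is exactly $\norm[2]{[-G_i]_+}/\gamma\le\sqrt{2\lambda/\gamma}$, which is equivalent to $\norm[2]{[-G_i]_+}\le\sqrt{2\lambda\gamma}$. This is condition (a).

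Case $U_i\neq 0$. Membership of $U_i$ in the argmin is equivalent to two requirements:
\begin{itemize}
\item $U_i=[U_i-G_i/\gamma]_+$;
\item $\norm[2]{U_i}=\norm[2]{[P_i]_+}\ge\sqrt{2\lambda/\gamma}$.
\end{itemize}
The first is the standard projection fixed-point characterization of the nonnegative KKT system. Entrywise, if $U_{ji}>0$ then $U_{ji}=U_{ji}-G_{ji}/\gamma$, so $G_{ji}=0$. If $U_{ji}=0$ then $\max(-G_{ji}/\gamma,0)=0$, so $G_{ji}\ge 0$. The converse direction is immediate. Together these give $U_i\ge0$, $G_i\ge0$ and $U_i\odot G_i=0$, which with the norm bound is condition (b). The two cases are mutually exclusive and exhaustive according to whether $U_i=0$, which gives the ``exactly one'' claim.

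I expect no real obstacle. The points needing care are the set-valuedness and tie case of the $\ell_{2,0}$ proximal map, which explains the non-strict inequalities in both (a) and (b), and the degenerate case $[P_i]_+=0$ in the reduction to the projection.
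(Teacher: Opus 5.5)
Your proposal is correct and follows essentially the same route as the paper's proof. Both separate the proximal problem by columns and take the solution set to be $\{0\}$, $\{0,[P_i]_+\}$ or $\{[P_i]_+\}$ according to how $\norm[2]{[P_i]_+}$ compares with $\sqrt{2\lambda/\gamma_{1,k}}$. Both then split on $U_i=0$ versus $U_i\neq 0$ and use the identity $U_i=[U_i-G_i/\gamma_{1,k}]_+$ to get the KKT conditions. The one difference is that you derive this column-wise solution set explicitly, including the tie case and the degenerate case $[P_i]_+=0$, whereas the paper simply asserts it.
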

Theorem \ref{thm:p_stationarity} provides a powerful algorithmic foundation. It explicitly partitions the columns of $U$ into two disjoint sets based on a calculable threshold. To operationalize this, we define a projected gradient descent step prior to the $\ell_{2,0}$-norm hard thresholding:
\begin{displaymath}
    Z_U^k := \max\left(0, U^k - \frac{1}{\gamma_{1,k}} \nabla_U F_{\mu}(U^k, V^k)\right).
\end{displaymath}
Following the theoretical bounds established in the theorem, we can unequivocally identify the active set $\mathcal{T}_U^k$ at iteration $k$ via the thresholding rule:
\begin{displaymath}
    \mathcal{T}_U^k := \left\{ i \in \{1,\dots,r\} \;\Big|\; \norm[2]{(Z_U^k)_i} > \sqrt{\frac{2\lambda}{\gamma_{1,k}}} \right\}.
\end{displaymath}

This active set formulation completely decouples the optimization over $U$. For any column index in the inactive set ($i \notin \mathcal{T}_U^k$), we strictly enforce $U_i^{k+1} = 0$, aligning with Condition (a). For the active subspace ($i \in \mathcal{T}_U^k$), the $\ell_{2,0}$-norm penalty acts as a constant, and the objective reduces to minimizing the smooth function $F_\mu$ over the non-negative orthant to satisfy Condition (b). This is mathematically equivalent to approximately solving a Non-Negative Least Squares (NNLS) subproblem strictly confined to the active columns. 

Applying a symmetric derivation to the block variable $V$, we establish the proposed Active-Set algorithm, detailed in Algorithm \ref{alg:ActiveSet}.

\begin{algorithm}[!t]
 \caption{\label{alg:ActiveSet}{\bf Proximal Active-Set Method for \cref{eq:l20_regularized_nmf}}}
 \textbf{Initialization:} Choose $U^0 \ge 0$, $V^0 \ge 0$. Set $k:=0$. \\
 \textbf{while} stopping conditions are not satisfied \textbf{do}
 \begin{itemize}[leftmargin=*, nosep]
  \item[\bf 1.] \textbf{Update $U$:} Select $\gamma_{1,k}\in \norm[2]{(V^k)^\top V^k} + \mu + [\alpha_1,\alpha_2]$. Compute intermediate variable $\tilde{U}^{k+1}$:
                \begin{multline}\label{eq:pasm_tilde_U}
                 \tilde{U}^{k+1} \in \mathop{\arg\min}_{U\in\mathbb{R}^{n\times r}} \Big\{\langle\nabla_U F_{\mu}(U^k,V^k), U\rangle\\ + \frac{\gamma_{1,k}}{2}\norm[F]{U\!-\!U^k}^2 + g_1(U)\Big\}.
                \end{multline}
                Get active set $\mathcal{T}_U^k = \big\{ i \;\big|\; \tilde{U}_i^{k+1} \neq 0 \big\}$. Set $\hat{U}_{i \notin \mathcal{T}_U^k}^{k+1} = 0$. \\
                For $\mathcal{T} = \mathcal{T}_U^k$, approximately solve the NNLS subproblem:
                \begin{equation}\label{eq:NNLS_U}
                    \hat{U}_{\mathcal{T}}^{k+1} \approx \mathop{\arg\min}_{U_{\mathcal{T}} \ge 0} \Big\{ \frac{1}{2}\norm[F]{X - U_{\mathcal{T}}(V_{\mathcal{T}}^k)^\top}^2 + \frac{\mu}{2}\norm[F]{U_{\mathcal{T}}}^2 \Big\}.
                \end{equation}

  \item[\bf 2.] \textbf{Update $V$:} Select $\gamma_{2,k}\in \norm[2]{(\hat{U}^{k+1})^\top \hat{U}^{k+1}} + \mu + [\alpha_1,\alpha_2]$. Compute intermediate variable $\tilde{V}^{k+1}$:
                \begin{multline}\label{eq:pasm_tilde_V}
                 \tilde{V}^{k+1} \in \mathop{\arg\min}_{V\in\mathbb{R}^{m\times r}} \Big\{\langle\nabla_V F_{\mu}(\hat{U}^{k+1},V^k), V\rangle \\+ \frac{\gamma_{2,k}}{2}\norm[F]{V\!-\!V^k}^2 + g_2(V)\Big\}.
                \end{multline}
                Get active set $\mathcal{T}_V^k = \big\{ i \;\big|\; \tilde{V}_i^{k+1} \neq 0 \big\}$. Set $\hat{V}_{i \notin \mathcal{T}_V^k}^{k+1} = 0$. \\
                For $\mathcal{T} = \mathcal{T}_V^k$, approximately solve the NNLS subproblem:
                \begin{equation}\label{eq:NNLS_V}
                    \hat{V}_{\mathcal{T}}^{k+1} \approx \mathop{\arg\min}_{V_{\mathcal{T}} \ge 0} \Big\{ \frac{1}{2}\norm[F]{X^\top - V_{\mathcal{T}}(\hat{U}_{\mathcal{T}}^{k+1})^\top}^2 + \frac{\mu}{2}\norm[F]{V_{\mathcal{T}}}^2 \Big\}.
                \end{equation}

  \item[\bf 3.] \textbf{Scale Balancing:} Compute $S^{k}$ and update $U^{k+1}$, $V^{k+1}$ exactly as in Step 2 of Algorithm \ref{alg:iPALM_scale}. Let $k \leftarrow k+1$.
 \end{itemize}
 \textbf{end while}
\end{algorithm}
\begin{remark}\label{remark3-ASHALS}
 {\bf(a)} {\bf Equivalence to Subspace Optimization.} On the active set, finding the stationary point fundamentally isolates the optimization to the non-zero variables, explicitly reducing the dimensionality of the problem. Since the non-negativity constraint strictly dictates the KKT conditions (i.e., $\hat{U}_{\mathcal{T}}^{k+1} \odot \nabla_{U_{\mathcal{T}}} F_{\mu}(\hat{U}_{\mathcal{T}}^{k+1}, V^k) = 0$, $\hat{U}_{\mathcal{T}}^{k+1} \ge 0$, and $\nabla_{U_{\mathcal{T}}} F_{\mu}(\hat{U}_{\mathcal{T}}^{k+1}, V^k) \ge 0$), the subproblems \eqref{eq:NNLS_U} and \eqref{eq:NNLS_V} cleanly translate into NNLS optimization confined to the active support.
 
 {\bf(b)} {\bf Inexact Solving of the NNLS Subproblem.} Calculating the exact solution for the NNLS subproblems is computationally expensive and practically unnecessary for large-scale matrices. Instead, we seek an approximate solution to decrease the objective effectively. While various algorithms can be employed for this inexact solve---such as Multiplicative Updates (MU), Projected Gradient Method (PGM), or Interior-Point methods---we adopt the Hierarchical Alternating Least Squares (HALS) method in this study due to its superior empirical efficiency. HALS inexactly minimizes the objective function column-by-column. For instance, to solve the subproblem \eqref{eq:NNLS_U}, let $A = X V_{\mathcal{T}}^k$ and $B = (V_{\mathcal{T}}^k)^\top V_{\mathcal{T}}^k$. Initializing the active submatrix $\hat{U}_{\mathcal{T}}$ with the Stage 1 output $\tilde{U}_{\mathcal{T}}^{k+1}$, the HALS update rule for the $j$-th column is formulated as:
 {\small
 \begin{displaymath}
     (\hat{U}_{\mathcal{T}})_j \leftarrow \max\left(0, \frac{A_j - \sum_{l \neq j} (\hat{U}_{\mathcal{T}})_l B_{lj}}{B_{jj} + \mu} \right), \quad \text{for } j = 1, \dots, |\mathcal{T}|.
 \end{displaymath}
 }%
By iteratively updating each column, this element-wise division avoids the massive computational overhead associated with exact solvers, and the final converged state naturally yields the desired Stage 2 variable $\hat{U}_{\mathcal{T}}^{k+1}$ with remarkably low per-iteration cost.
\end{remark}

\subsection{Global Convergence Analysis}
\label{subsec:pasm_convergence}

To rigorously establish the global convergence of the Proximal Active-Set Method (\Cref{alg:ActiveSet}) under the Kurdyka-{\L}ojasiewicz (KL) framework, we first analyze the properties of the single-sided HALS update for $U$. This step sequentially minimizes $\mathcal{L}(U) := h(U) + \delta_{\ge 0}(U)$ with $h(U) = \frac{1}{2}\|X-UV^\top\|_F^2 + \frac{\mu}{2}\|U\|_F^2$ while $V$ is fixed, yielding the exact column-wise update $u_i^{k+1} = \argmin_{u \ge 0} h(u_{<i}^{k+1}, u, u_{>i}^k)$. Notably, the inclusion of the $\frac{\mu}{2}\|U\|_F^2$ term allows us to dispense with the standard assumption that $\|v_i\|_2$ is strictly bounded away from zero \cite[Assumption 1]{hou2023convergence}. Moreover, we establish a tighter, rank-independent subgradient bound compared to \cite[Proposition 1 (ii)]{hou2023convergence}.

\begin{lemma}[Sufficient Descent of HALS] \label{lem:descent}
Let $\{U^k\}$ be the sequence generated by the single-sided HALS. There exists $c_1 \ge \frac{\mu}{2} > 0$ such that $\mathcal{L}(U^k) - \mathcal{L}(U^{k+1}) \ge c_1 \|U^k - U^{k+1}\|_F^2$.
\end{lemma}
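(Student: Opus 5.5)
The plan is to decompose one full HALS sweep into its $r$ column substeps and to show that each substep gives a sufficient decrease in its own column. Write $U^{k,i} := (u^{k+1}_{\le i}, u^k_{>i})$, so that $U^{k,0}=U^k$ and $U^{k,r}=U^{k+1}$. Consecutive matrices $U^{k,i-1}$ and $U^{k,i}$ differ only in column $i$, where $u_i^k$ is replaced by $u_i^{k+1}$. With all other columns frozen, $h$ as a function of the single column $u$ is the quadratic
\begin{displaymath}
\phi_i(u) = \tfrac12\big(\|v_i\|_2^2+\mu\big)\|u\|_2^2 - \langle R_i v_i, u\rangle + \text{const},
\end{displaymath}
where $R_i := X - \sum_{l\neq i} u_l v_l^\top$ is built from the current columns. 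This function is strongly convex with modulus $\sigma_i := \|v_i\|_2^2+\mu \ge \mu > 0$. The $\mu$ term is what removes the need for the assumption in \cite{hou2023convergence}: the modulus stays bounded below by $\mu$ even when $v_i = 0$.

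The key step is the standard strong-convexity inequality for a constrained minimizer. The convex set is $C=\mathbb{R}^n_+$, and $u_i^{k+1} = \argmin_{u\in C}\phi_i(u)$. The first-order optimality condition gives $\langle \nabla\phi_i(u_i^{k+1}), u - u_i^{k+1}\rangle \ge 0$ for all $u\in C$. Combining this with strong convexity at $u = u_i^k \in C$ gives
\begin{displaymath}
\phi_i(u_i^k) - \phi_i(u_i^{k+1}) \ge \tfrac{\sigma_i}{2}\|u_i^k - u_i^{k+1}\|_2^2 \ge \tfrac{\mu}{2}\|u_i^k-u_i^{k+1}\|_2^2 .
\end{displaymath}
Since $\phi_i$ is exactly $h$ restricted to column $i$, and both points are nonnegative so that $\delta_{\ge0}$ vanishes, this reads $\mathcal{L}(U^{k,i-1}) - \mathcal{L}(U^{k,i}) \ge \frac{\mu}{2}\|u_i^k-u_i^{k+1}\|_2^2$. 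Nonnegativity of $U^k$ needs a brief justification: it holds for the initial point, and every HALS output is a projection onto $\mathbb{R}^n_+$.

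Next, sum the per-column inequalities over $i=1,\dots,r$. The left-hand sides telescope to $\mathcal{L}(U^k)-\mathcal{L}(U^{k+1})$. The right-hand sides add up to $\frac{\mu}{2}\sum_i \|u_i^k-u_i^{k+1}\|_2^2 = \frac{\mu}{2}\|U^k-U^{k+1}\|_F^2$, because the columns partition the Frobenius norm. This gives the claim with $c_1 = \frac{\mu}{2}$. If one also assumes a uniform lower bound $\underline{v} = \min_i \|v_i\|_2^2$, the argument instead gives the sharper constant $c_1 = \frac{1}{2}(\underline{v}+\mu) \ge \frac{\mu}{2}$.

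There is no real obstacle here. The main points needing care are two. First, each substep must be an exact column minimization, which the closed-form projected update guarantees because the problem is separable into a scalar quadratic with a box constraint; this is also where the denominator $B_{jj}+\mu > 0$ comes from. Second, the telescoping must be done with the correct intermediate matrices.
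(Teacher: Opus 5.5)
Your proposal is correct and follows the paper's proof essentially verbatim. Both arguments use exact minimization of each column quadratic, which has curvature $\|v_i\|_2^2+\mu$, over $\mathbb{R}^n_+$, and then telescope over the cyclic sweep; the only cosmetic difference is that the paper states the constant directly as $c_1=\frac12\min_i(\|v_i\|_2^2+\mu)\ge\mu/2$, which needs no extra ``uniform lower bound'' assumption since $V$ is fixed and there are finitely many columns.
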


\begin{lemma}[Subgradient Bound of HALS] \label{lem:subgradient}
There exists a subgradient $W^{k+1} \in \partial \mathcal{L}(U^{k+1})$ satisfying $\|W^{k+1}\|_F \le c_2 \|U^k - U^{k+1}\|_F$, where $c_2 = \|M_L\|_2$ and $M_L$ is the strictly lower triangular part of $V^\top V$.
\end{lemma}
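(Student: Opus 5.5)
We prove Lemma~\ref{lem:subgradient} by writing down the first-order optimality conditions of the $r$ exact column subproblems that make up one HALS sweep, and then comparing them with the subdifferential of $\mathcal{L}$ at the new iterate $U^{k+1}$. Write $u_i$ for the columns of $U$ and $v_i$ for those of $V$, and set $B=V^\top V$.

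\textbf{Optimality of each column update.} Since $h(u_{<i}^{k+1},u,u_{>i}^k)$ is strongly convex in $u$ (its modulus is $\|v_i\|_2^2+\mu\ge\mu>0$), the update $u_i^{k+1}$ is the unique minimizer over $u\ge 0$. It is therefore characterized by
\begin{displaymath}
0\in \Big(\sum_{l<i}u_l^{k+1}B_{li}+u_i^{k+1}B_{ii}+\sum_{l>i}u_l^{k}B_{li}-Xv_i\Big)+\mu u_i^{k+1}+\partial\delta_{\ge0}(u_i^{k+1}).
\end{displaymath}
In other words, there is $\xi_i\in\partial\delta_{\ge0}(u_i^{k+1})$ with $\nabla_{u_i}h(\tilde U^{(i)})+\xi_i=0$, where $\tilde U^{(i)}=(u_{<i}^{k+1},u_i^{k+1},u_{>i}^k)$.

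\textbf{Constructing the subgradient.} Because $\mathcal{L}$ is a smooth function plus a separable indicator, $\partial\mathcal{L}(U^{k+1})=\nabla h(U^{k+1})+\partial\delta_{\ge0}(U^{k+1})$, and the indicator part splits column by column. We therefore take $W^{k+1}$ with columns $W_i=\nabla_{u_i}h(U^{k+1})+\xi_i$. By construction $W^{k+1}\in\partial\mathcal{L}(U^{k+1})$, and subtracting the optimality condition gives
\begin{displaymath}
W_i=\nabla_{u_i}h(U^{k+1})-\nabla_{u_i}h(\tilde U^{(i)})=\sum_{l>i}(u_l^{k+1}-u_l^k)B_{li}.
\end{displaymath}
The diagonal and $\mu$ terms cancel because column $i$ itself is already updated in $\tilde U^{(i)}$.

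\textbf{Matrix form and the norm bound.} In matrix form this reads $W^{k+1}=(U^{k+1}-U^k)N$, where $N_{li}=B_{li}$ for $l>i$ and $0$ otherwise. So $N$ is the strictly lower triangular part $M_L$ of $V^\top V$. It then follows that
\begin{displaymath}
\|W^{k+1}\|_F\le\|M_L\|_2\,\|U^{k+1}-U^k\|_F.
\end{displaymath}
This bound is rank-independent: it uses the operator norm of $M_L$ rather than a column-sum estimate that would introduce a factor $\sqrt r$.

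\textbf{Main obstacle.} The step needing care is the index bookkeeping: confirming that the gradient difference picks up only the indices $l>i$, i.e.\ the strictly lower-triangular part under the convention $W=\Delta U\,N$ with $N_{li}$ in row $l$, column $i$. Alongside this we must check that no lower bound on $\|v_i\|$ is needed. That holds here because $\mu>0$ guarantees a unique minimizer and the KKT characterization even when $v_i=0$.
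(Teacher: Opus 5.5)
Your proposal is correct and follows essentially the same route as the paper. Both arguments start from the column-wise optimality conditions. Both take the subgradient $\nabla_{u_i}h(U^{k+1})-\nabla_{u_i}h(u_{\le i}^{k+1},u_{>i}^k)$ on each column, rewrite it as $W^{k+1}=(U^{k+1}-U^k)M_L$, and conclude with the operator-norm bound.
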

Proofs of \Cref{lem:descent,lem:subgradient} are given in Appendix~\ref{app:proof_hals_bounds}.

Building upon these local properties of the active-set subproblems, we now extend the analysis to the entire PASM sequence, establishing the sufficient decrease and relative error conditions required by the KL framework.

\begin{lemma}[Sufficient Decrease of PASM] \label{lem:sufficient_decrease}
Let $\{(U^k, V^k)\}$ be the sequence generated by Algorithm \ref{alg:ActiveSet}. There exists a constant $\rho = \min(\alpha_1, \mu) > 0$ such that for all $k \ge 0$:
{\small
\begin{multline}
\Psi(U^{k+1}, V^{k+1}) \le \Psi(U^k, V^k) \\- \frac{\rho}{2} \left( \|\hat{U}^{k+1} - \tilde{U}^{k+1}\|_F^2+\|\tilde{U}^{k+1} - U^k\|_F^2\right) \\- \frac{\rho}{2}\left(\|\hat{V}^{k+1} - \tilde{V}^{k+1}\|_F^2+\|\tilde{V}^{k+1} - V^k\|_F^2 \right).
\end{multline}
}
\end{lemma}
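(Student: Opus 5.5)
The plan is to chain the six substeps of one PASM iteration: the proximal step for $U$, the HALS refinement for $U$, the proximal step for $V$, the HALS refinement for $V$, and the scale balancing. For each substep we will show that $\Psi$ does not increase, and for the four optimization substeps that it decreases by $\frac{\rho}{2}$ times the squared displacement. Write $\Psi(U,V)=F_\mu(U,V)+g_1(U)+g_2(V)$.

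\textbf{Proximal step for $U$.} Since $\tilde U^{k+1}$ minimizes the linearized model \eqref{eq:pasm_tilde_U}, comparing its value with that at $U=U^k$ gives
\[
\langle\nabla_U F_\mu(U^k,V^k),\tilde U^{k+1}-U^k\rangle+\frac{\gamma_{1,k}}{2}\|\tilde U^{k+1}-U^k\|_F^2+g_1(\tilde U^{k+1})\le g_1(U^k).
\]
The map $U\mapsto F_\mu(U,V^k)$ has a gradient that is Lipschitz with constant $\tau_{V^k}=\|(V^k)^\top V^k\|_2+\mu$. The descent lemma then yields
\[
\Psi(\tilde U^{k+1},V^k)\le\Psi(U^k,V^k)-\frac{\gamma_{1,k}-\tau_{V^k}}{2}\|\tilde U^{k+1}-U^k\|_F^2,
\]
and $\gamma_{1,k}-\tau_{V^k}\ge\alpha_1$.

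\textbf{HALS refinement for $U$.} The vector $\hat U^{k+1}$ is obtained by HALS on the support $\mathcal T_U^k$, started from $\tilde U^{k+1}_{\mathcal T}$; columns outside $\mathcal T_U^k$ remain zero. Its support is therefore contained in $\mathcal T_U^k$, so $\|\hat U^{k+1}\|_{2,0}\le\|\tilde U^{k+1}\|_{2,0}$, and $g_1$ does not increase. On the active block the objective equals $\mathcal L$ plus constants, so Lemma~\ref{lem:descent} applies. Its constant satisfies $c_1\ge\mu/2$, which I will use to give a decrease of at least $\frac{\mu}{2}\|\hat U^{k+1}-\tilde U^{k+1}\|_F^2$ for one HALS sweep. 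If several sweeps are performed, the decreases telescope. Using $\sum_j\|\Delta_j\|^2\ge\frac1s\|\sum_j\Delta_j\|^2$ to pass to the total displacement would cost a factor $1/s$, so I will instead rely on the following fact. Each column update is an exact minimization of a strongly convex function with modulus at least $\mu$, so the decrease in that column is at least $\frac{\mu}{2}$ times the squared change. The question is how this combines with the total displacement over several sweeps, and this is the step I expect to be the main obstacle. I will treat ``approximately solve'' as one HALS sweep, which is consistent with Lemma~\ref{lem:descent}. Alternatively one can note that the strong convexity of the full block objective in $U$ gives
\[
h(\tilde U)-h(\hat U)\ge\frac{\mu}{2}\|\tilde U-\hat U\|^2
\]
whenever $\hat U$ is the exact minimizer.

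\textbf{Steps for $V$.} The same two arguments apply to $V$ with $\hat U^{k+1}$ fixed, using the Lipschitz constant $\tau_{\hat U^{k+1}}$ and the choice $\gamma_{2,k}\ge\tau_{\hat U^{k+1}}+\alpha_1$.

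\textbf{Scale balancing.} Rescaling the columns preserves each product $U_iV_i^\top$, so the data-fit term is unchanged. Zeroing a pair in which one column vanishes also leaves the product unchanged and cannot increase $\|\cdot\|_{2,0}$. By Remark~\ref{remark2-scale}(a), the AM--GM inequality shows that $\|U_i\|^2+\|V_i\|^2$ is minimized at balanced norms. Hence
\[
\Psi(U^{k+1},V^{k+1})\le\Psi(\hat U^{k+1},\hat V^{k+1}).
\]
Summing the decreases from all substeps, with $\rho=\min(\alpha_1,\mu)$, gives the stated inequality.
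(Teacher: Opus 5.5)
Your proposal is correct and follows the paper's proof step for step. Both use proximal-step descent with constant $\alpha_1$ for $\tilde U^{k+1}$ and $\tilde V^{k+1}$, Lemma~\ref{lem:descent} with $c_1\ge\mu/2$ for a single HALS sweep on the active support, the symmetric $V$ steps, and the fact that scale balancing cannot increase $\Psi$, summed with $\rho=\min(\alpha_1,\mu)$. Your one-sweep reading of the NNLS step is what the paper's proof also relies on, and your observation that HALS can only shrink the support (so $g_1$ is nonincreasing, not constant) is a slightly more careful version of the paper's claim.
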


\begin{lemma}[Subgradient Bound of PASM] \label{lem:subgradient_bound}
Let $\{(U^k, V^k)\}$ be the sequence generated by Algorithm \ref{alg:ActiveSet}. There exist a constant $C=C_V+C_U+\sqrt{2}\mu+\tau+(\sqrt{2}+1)c_f$ and a subgradient $W^{k+1} \in \partial \Psi(U^{k+1}, V^{k+1})$ such that:
{\small
\begin{multline}
\|W^{k+1}\|_F \le C \left( \|\hat{U}^{k+1} - \tilde{U}^{k+1}\|_F+\|\tilde{U}^{k+1} - U^k\|_F \right. \\
\left. +\|\hat{V}^{k+1} - \tilde{V}^{k+1}\|_F+\|\tilde{V}^{k+1} - V^k\|_F \right).
\end{multline}
}
\end{lemma}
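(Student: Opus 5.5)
Since the bound controls $\operatorname{dist}(0,\partial\Psi(U^{k+1},V^{k+1}))$ and not just a block, I will assemble $W^{k+1}=(W_U,W_V)$ from the optimality conditions of the four computations in iteration $k$: the proximal steps \eqref{eq:pasm_tilde_U} and \eqref{eq:pasm_tilde_V}, the HALS solves \eqref{eq:NNLS_U} and \eqref{eq:NNLS_V}, and the scale balancing. Proposition~\ref{prop:partial} and Proposition~\ref{prop:crit} reduce the problem to the smooth-plus-indicator function on the active supports. On a zero column, the $\ell_{2,0}$ term has a subdifferential containing all of $\mathbb{R}^n$ (it is the limit of the local constancy argument of \cite[Lemma 2.3]{tao2022column}), so I may take $(W_U)_j=0$ there. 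On the active columns $J$ of $U^{k+1}$, only the $\widetilde F_\mu$ part matters. Balancing preserves supports, and by Remark~\ref{remark2-scale}(c) it also produces co-sparsity, so $J=\mathcal{J}_{U^{k+1}}=\mathcal{J}_{V^{k+1}}\subseteq\mathcal{T}_U^k\cap\mathcal{T}_V^k$.

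\textbf{Step 1 (intermediate subgradients).} Apply Lemma~\ref{lem:subgradient} to the HALS solve for $U$ on $\mathcal{T}_U^k$, initialized at $\tilde U^{k+1}$. I assume one HALS sweep; for several sweeps I would use the final sweep and telescope. This gives $\xi_U\in\nabla_U F_\mu(\hat U^{k+1},V^k)+\partial\delta_{\mathbb{R}_+}(\hat U^{k+1})$ on the active columns with $\|\xi_U\|_F\le C_U\|\hat U^{k+1}-\tilde U^{k+1}\|_F$, where $C_U$ bounds $\|M_L\|_2$ uniformly by boundedness of the iterates. Symmetrically, I obtain $\xi_V\in\nabla_V F_\mu(\hat U^{k+1},\hat V^{k+1})+\partial\delta_{\mathbb{R}_+}(\hat V^{k+1})$ with $\|\xi_V\|_F\le C_V\|\hat V^{k+1}-\tilde V^{k+1}\|_F$. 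The normal cone part in $\xi_V$ is already evaluated at the right $U$-argument, so only $\xi_U$ needs its $V$-argument moved from $V^k$ to $\hat V^{k+1}$.

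\textbf{Step 2 (transfer through balancing).} Write $U^{k+1}_j=s_j\hat U^{k+1}_j$ and $V^{k+1}_j=s_j^{-1}\hat V^{k+1}_j$. The normal cone of $\mathbb{R}_+$ is invariant under positive scaling, so the same normal vectors $N_U,N_V$ remain valid at $(U^{k+1},V^{k+1})$. I then set
\[
(W_U)_J=\big[\nabla_U F_\mu(U^{k+1},V^{k+1})+N_U\big]_J,\qquad (W_V)_J=\big[\nabla_V F_\mu(U^{k+1},V^{k+1})+N_V\big]_J .
\]
Then $W_U=\xi_U+\big[\nabla_UF_\mu(U^{k+1},V^{k+1})-\nabla_UF_\mu(\hat U^{k+1},V^k)\big]$, and similarly for $W_V$. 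The gradient differences expand as
\[
(U^{k+1}(V^{k+1})^\top-X)V^{k+1}-(\hat U^{k+1}(V^k)^\top-X)V^k+\mu(U^{k+1}-\hat U^{k+1}).
\]
I bound these using the invariance $U^{k+1}(V^{k+1})^\top=\hat U^{k+1}(\hat V^{k+1})^\top$, the residual bound $c_f$, and $\tau$, which bounds $\|U\|_2$ and $\|V\|_2$. This yields terms like $c_f\|V^{k+1}-V^k\|_F+\tau\|\hat V^{k+1}-V^k\|_F+\mu\|U^{k+1}-\hat U^{k+1}\|_F$.

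\textbf{Step 3 (reduce to the four residuals).} Use the triangle inequality $\|\hat V^{k+1}-V^k\|\le\|\hat V^{k+1}-\tilde V^{k+1}\|+\|\tilde V^{k+1}-V^k\|$, and likewise for $U$. Bound the balancing jump $\|U^{k+1}-\hat U^{k+1}\|_F+\|V^{k+1}-\hat V^{k+1}\|_F$ via Lemma~\ref{lemma:matrix_jump}, eq.~\eqref{eq:matrix_jump_l1}; this is valid because the previous iterate is balanced by construction. That lemma contributes the $\sqrt2$ factors in $\sqrt2\mu+(\sqrt2+1)c_f$. Collecting all terms gives $C=C_V+C_U+\sqrt2\mu+\tau+(\sqrt2+1)c_f$.

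\textbf{Main obstacle.} The delicate point is Step 2 on indices in $\mathcal{T}_U^k\setminus J$, i.e.\ columns zeroed by HALS or by co-sparsity balancing. I plan to resolve this by taking the subgradient coordinate to be $0$ there, using the $\ell_{2,0}$ structure from Proposition~\ref{prop:partial}, which imposes no condition on inactive columns.
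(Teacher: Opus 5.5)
Your proposal is correct and follows the paper's proof essentially step for step. The shared steps are: a zero subgradient on inactive columns; the HALS subgradient on the active support, corrected by the gradient differences at $(U^{k+1},V^{k+1})$; invariance of the normal cone and of $U^{k+1}(V^{k+1})^\top=\hat U^{k+1}(\hat V^{k+1})^\top$ under balancing; and then Lemma~\ref{lemma:matrix_jump} with the triangle inequality, which gives the same $C$. Your labels $C_U$ and $C_V$ are swapped relative to the paper, which is harmless since $C$ is symmetric in them.

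As in the paper, the bound in terms of $\|\hat U^{k+1}-\tilde U^{k+1}\|_F$ genuinely requires a single HALS sweep. Your aside about handling several sweeps by telescoping does not work as stated. Lemma~\ref{lem:subgradient} controls the subgradient by the final sweep's displacement, and telescoping bounds the total displacement by the sum of sweep displacements, not the final sweep's displacement by the total.
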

Proofs of \Cref{lem:sufficient_decrease,lem:subgradient_bound} are given in Appendix~\ref{app:proof_pasm_bounds}.

The decrease inequality, coercivity, and the proximal optimality conditions also imply that $\Psi$ is finite and constant on the compact cluster set of the PASM sequence, by the same lower-semicontinuity argument used for \Cref{prop:KL_properties-iPALM}.

Based on \Cref{thm:unified_global_convergence} with $Z^k = (U^k,V^k)$, $X^k = Z^k$, $\psi(X^k) = \Psi(U^k,V^k)$, $\Delta Z^k = Z^{k+1} - Z^k$, and $\mathcal{R}^k = \|\hat{U}^{k+1} - \tilde{U}^{k+1}\|_F+\|\tilde{U}^{k+1} - U^k\|_F+\|\hat{V}^{k+1} - \tilde{V}^{k+1}\|_F+\|\tilde{V}^{k+1} - V^k\|_F$, the sequence $\{Z^k\}_{k\in\mathbb{N}}$ globally converges to a critical point $Z^*$.

\section{Numerical Experiments}\label{sec:experiments}

In this section, we evaluate the efficiency and effectiveness of the three proposed algorithms: iPALM, Scale-Balanced iPALM (S-iPALM), and the Proximal Active-Set Method (PASM). The experiments comprise (1) an algorithmic efficiency comparison with the conventional PALM method; (2) diagnostic comparisons of the continuous $\lambda$-path with discrete $r$-paths and of S-iPALM with its unbalanced counterpart; (3) model-order recovery on synthetic data against Singular Value Hard Thresholding (SVHT) and Cross-Validation (CV); and (4) validation on diverse signal benchmarks spanning image, hyperspectral, single-cell, and document--term data. All experiments are implemented in MATLAB R2025b on a PC equipped with an Intel Core i9-14900HX 2.20GHz processor and 32GB of RAM. The MATLAB implementation and scripts used in the numerical experiments are publicly available at \url{https://github.com/Kemoralocy/ORS-L20-NMF}.

\subsection{Implementation Details}\label{sec:implementation}

Our stopping criterion first requires the stabilization of the support set, meaning the positive rank must remain constant for $T_1 = 20$ successive iterations:
\begin{equation}
    \mathrm{rank}_+(X^k) = \dots = \mathrm{rank}_+(X^{k-T_1}),
\end{equation}
where $\mathrm{rank}_+(X^j) = \min(\mathrm{rank}(U^j), \mathrm{rank}(V^j))$. Once stabilized, the algorithm terminates if any of the following conditions is met:

\begin{itemize}
    \item[\textbf{1.}] \textbf{Relative decrease of the objective function:}
    \begin{equation}
        \frac{|\Phi_{\lambda,\mu}(U^k,V^k) - \Phi_{\lambda,\mu}(U^{k-T_2},V^{k-T_2})|}{\max(1, |\Phi_{\lambda,\mu}(U^k,V^k)|)} \le \epsilon.
    \end{equation}
    
    \item[\textbf{2.}] \textbf{Relative change of the variables:}
    \begin{equation}
        \frac{\|(U^k,V^k) - (U^{k-T_3},V^{k-T_3})\|_F}{\max(1, \|(U^k,V^k)\|_F)} \le \epsilon.
    \end{equation}
    
    \item[\textbf{3.}] \textbf{Projected gradient stationarity:}
    Given that Algorithm 3 is a two-stage method and structurally distinct from Algorithms 1 and 2, we define the projected gradient for $U$ as follows:
    {\small
    \begin{multline}
        (\nabla^p_U F_{\mu}(U,V))_{ij} \\= 
        \begin{cases} 
            (\nabla_U F_{\mu}(U,V))_{ij}, &\begin{multlined}[t]
             \text{if } (\nabla_U F_{\mu}(U,V))_{ij} < 0 \text{ or } U_{ij} > 0,    
            \end{multlined} \\
            0, & \text{otherwise}.
        \end{cases}
    \end{multline}
    }%
The projected gradient $\nabla^p_V F_{\mu}(U,V)$ is defined analogously. According to \Cref{prop:partial}, it directly follows that $\nabla^p_U F_{\mu}(U^k,V^k) \in \partial_U \Psi_{\lambda,\mu}(U^k,V^k)$ and $\nabla^p_V F_{\mu}(U^k,V^k) \in \partial_V \Psi_{\lambda,\mu}(U^k,V^k)$. Thus, the termination condition is given by:
    \begin{equation}
        \|(\nabla^p_U F_{\mu}(U^k,V^k), \nabla^p_V F_{\mu}(U^k,V^k))\|_F \le \epsilon_1.
    \end{equation}
    However, it is worth noting that all gradient-based stopping criteria remain sensitive to scaling \cite{kim2011fast,kim2014algorithms,gillis2020nonnegative}.
\end{itemize}

We empirically set $T_2 = T_3 = 10$, $\epsilon = 10^{-4}$, and $\epsilon_1 = 10^{-3}$. For the proposed algorithms, the step sizes are selected as $\gamma_{1,k}=\tau_{\!V^k}+\delta$ and $\gamma_{2,k}= \tau_{\!U^{k+1}} + \delta$ with $\delta=10^{-6}$. We employ Nesterov's accelerated strategy
  \cite{Nesterov83} to yield $\beta_k$ of Algorithm \ref{alg:iPALM} and \ref{alg:iPALM_scale}, i.e.,
  $\beta_k=\frac{t_{k-1}-1}{t_{k}}$ with $t_{-1}=t_0=1$ and $t_{k+1}=\frac{1+\sqrt{4t_k^2+1}}{2}$.
  Though our convergence results require a restriction on $\beta_k$, numerical tests
  show that Algorithm \ref{alg:iPALM} and \ref{alg:iPALM_scale} still converge without it. In view of this,
  we do not impose any restriction on such $\beta_k$ for the subsequent tests,
  and leave this gap for a future research topic. Across all experiments, we initialize the estimated rank to $r=\min(\lceil0.5\min(n,m)\rceil,50)$ and fix $\mu=10^{-8}$.

Next, we specify the adaptive choice of $\lambda$. For the $i$th warm start, let $\hat{P}^{i,0}$ denote $\hat{P}^0$ in \Cref{remark1-iPALM}, write $\gamma_i$ for its $U$-block step size, and let $g^{(i)\downarrow}$ be the nonincreasing rearrangement of $\{\|(\hat{P}^{i,0})_j\|_2^2:j\in\mathcal{T}_i\}$, where $\mathcal{T}_i$ is the active support and $q_i=|\mathcal{T}_i|$. The rule therein shows that column $j$ is retained precisely when $\lambda<\frac{1}{2}\gamma_i\|(\hat{P}^{i,0})_j\|_2^2$. Therefore, with $\varsigma=10^{-8}$, we set
\begin{equation}\label{eq:adaptive_lambda_path}
\begin{aligned}
\lambda_1 = \tfrac{1}{2}(1+\varsigma)\gamma_0
g_{q_0}^{(0)\downarrow}&, \quad
\overline{\lambda}_i = \tfrac{1}{2}(1+\varsigma)\gamma_i
g_2^{(i)\downarrow},\\
\lambda_{i+1} =& \lambda_i+
\frac{\overline{\lambda}_i-\lambda_i}{N_\lambda-i},
\end{aligned}
\end{equation}
where $\lambda_1$ crosses the smallest initial breakpoint and $\overline{\lambda}_i$ crosses the second-largest current breakpoint. Since $U$ is updated first, the $U$-side quantities are used; for PASM, the same construction uses $Z_U^0$ in place of $\hat P^0$. Each subproblem is warm-started from the preceding solution. Unless otherwise specified, the path terminates at rank one or after $N_\lambda=30$ points; the diagnostic experiments in \Cref{sec:lambda_path_analysis,sec:scale_balancing_experiment} use $N_\lambda=20$. From the solutions associated with distinct ranks, we compute
\begin{equation}\label{eq:marginal_rank_score}
\vartheta(i)=
\frac{|\mathrm{loss}(i-1)-\mathrm{loss}(i)|}
{|\mathrm{rank}(i-1)-\mathrm{rank}(i)|}.
\end{equation}
With $\mathrm{loss}(0)=0.5(\|X\|_F^2-\sigma_1(X)^2)$ and $\mathrm{rank}(0)=1$, we select $\lambda_{i^*-1}$, where $i^*$ is the first index satisfying $\vartheta(i^*-1)/\vartheta(i^*)>\tau$; we use $\tau=5$ for synthetic data and $\tau=2$ for real-world data.

\subsection{Algorithmic Efficiency Comparison}\label{sec:efficiency}

We first evaluate the computational efficiency of PALM, iPALM, S-iPALM, and PASM in solving model \cref{eq:l20_regularized_nmf} on small-scale ($n = 100, m = 80$) and large-scale ($n = 1000, m = 800$) synthetic non-negative matrices. All four algorithms are strictly initialized with the exact same random non-negative starting point $(U^0, V^0)$, an overestimated rank $r = 30$, and fixed penalty parameters $\lambda$ and $\mu$.

\begin{figure}
    \centering
    \includegraphics[width=1\linewidth]{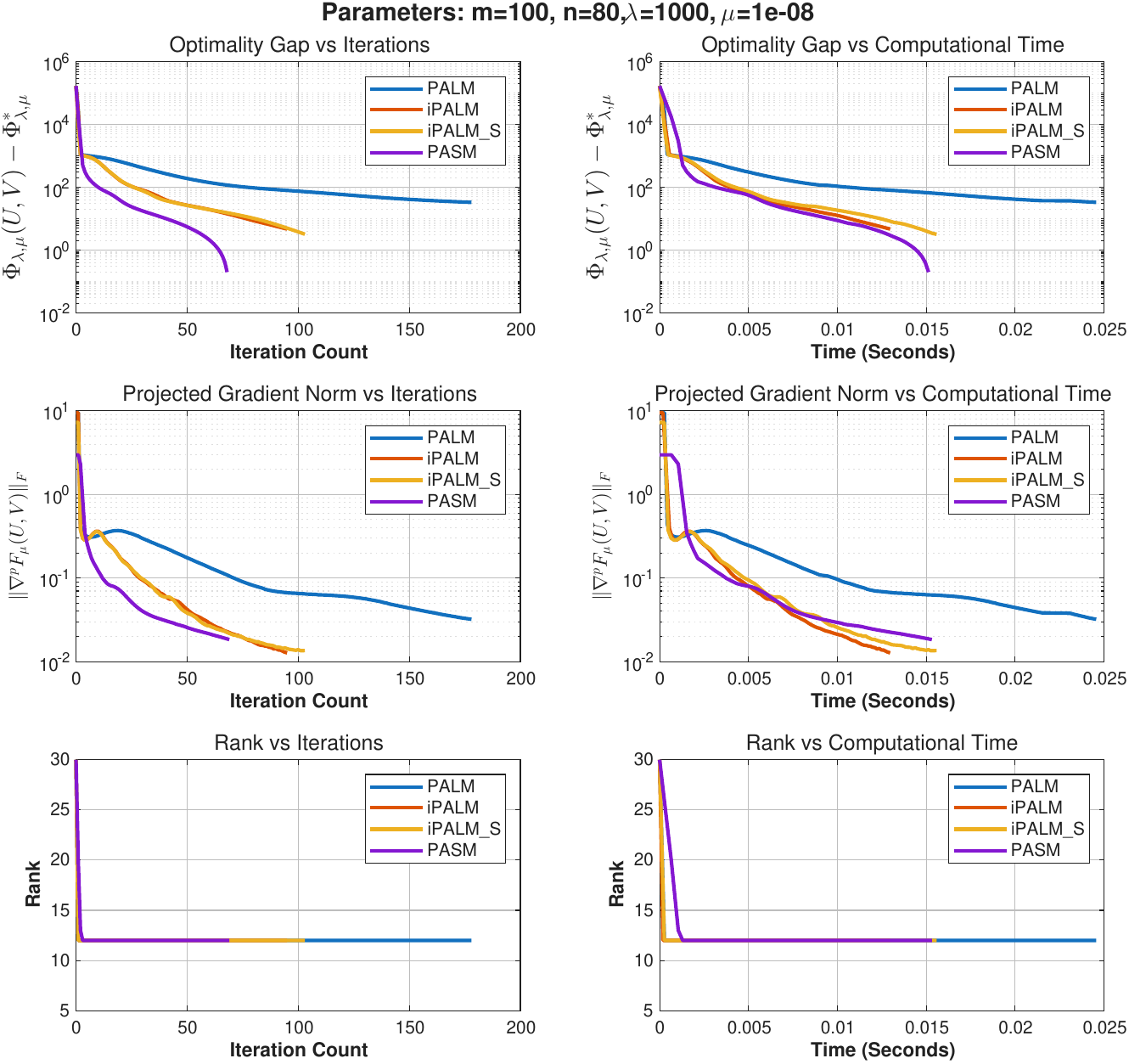}
    \includegraphics[width=1\linewidth]{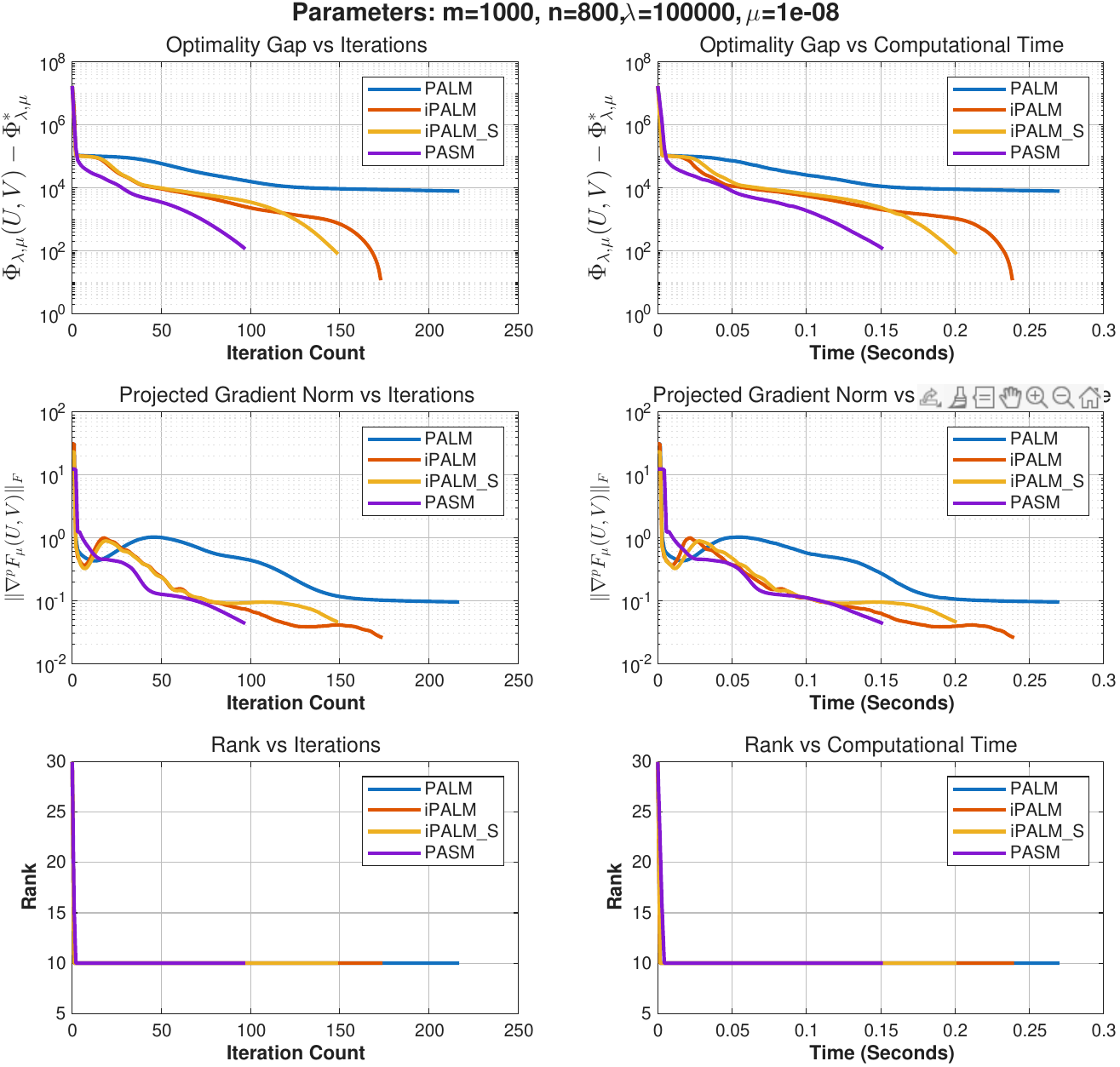}
    \caption{Comparison of computational efficiency among PALM, iPALM, S-iPALM, and PASM: objective function value, projected gradient norm and rank versus iterations and CPU running time.}
    \label{fig:efficiency}
\end{figure}

Figure \ref{fig:efficiency} plots the objective function values against the iteration count and computational time. Both iPALM and S-iPALM exhibit significantly faster objective descent per iteration compared to the baseline PALM. Most notably, the proposed PASM demonstrates an overwhelming advantage in both iteration efficiency and overall running time.

\subsection{Continuous \texorpdfstring{$\lambda$}{lambda}-Path versus Discrete Rank Search}\label{sec:lambda_path_analysis}

We next illustrate how the warm-started $\lambda$-path eliminates redundant components and why it is preferable to a discrete $r$-path. The diagnostic experiments use synthetic data generated as described in \Cref{sec:synthetic} with $m=100$, $n=80$, and $r^*=10$, together with the Swimmer dataset in \Cref{sec:real_data}, whose reference rank is $r^*=16$. We set $\mu=10^{-8}$ and $N_\lambda=20$ and use the stopping tolerances specified in \Cref{sec:implementation}.

\begin{figure}[t]
    \centering
    \includegraphics[width=\linewidth]{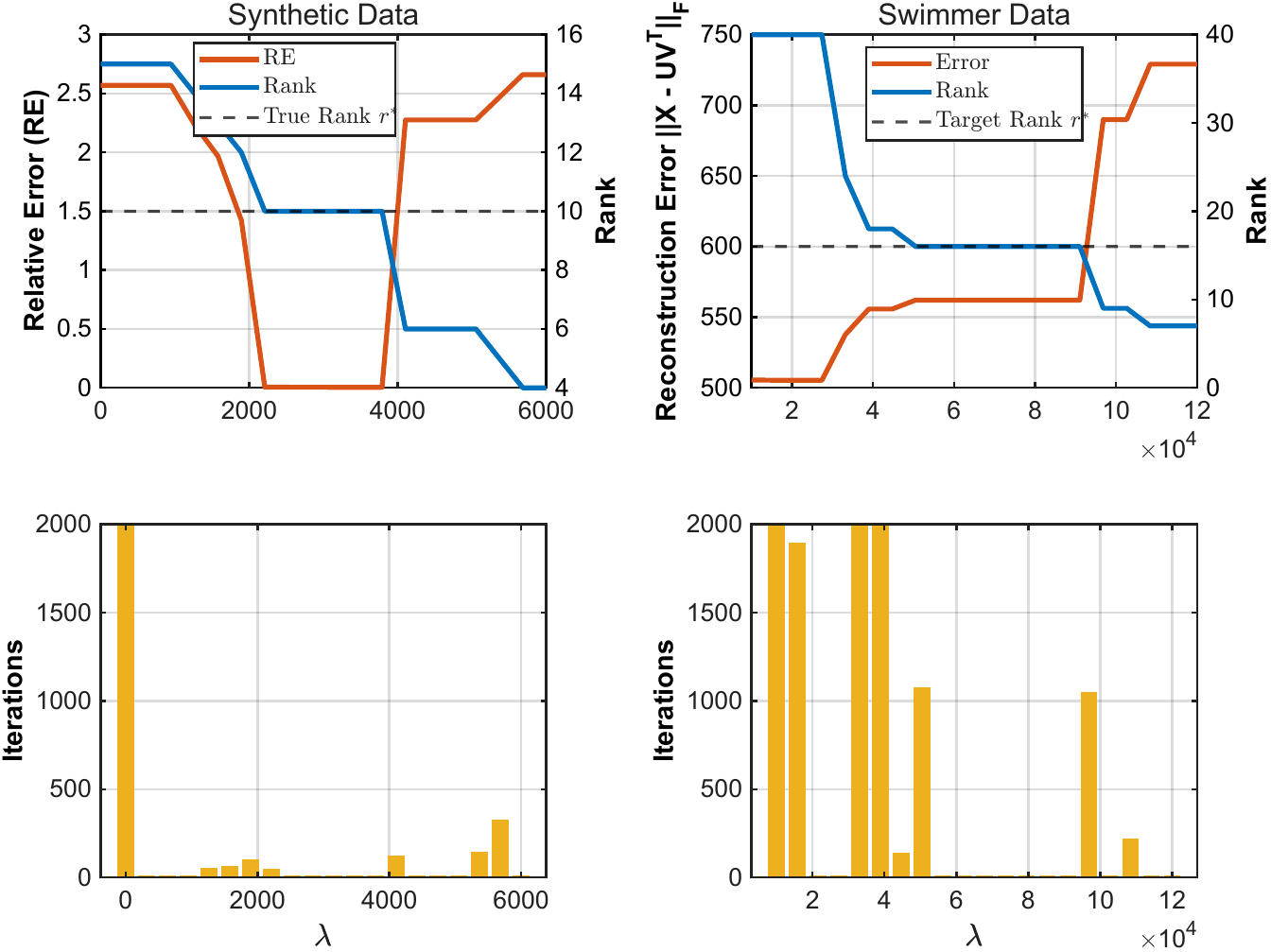}
    \caption{Continuous $\lambda$-paths and warm-start iteration counts on the synthetic and Swimmer datasets. The settings are $m=100$, $n=80$, and $r^*=10$ for the synthetic data; $r^*=16$ for Swimmer; $\mu=10^{-8}$; and $N_\lambda=20$.}
    \label{fig:lambda_path}
\end{figure}

As shown in \Cref{fig:lambda_path}, the estimated rank changes in a piecewise-constant manner and exhibits a clear plateau at the reference rank, where the reconstruction error is also low. Apart from the initial solve and several rank-transition points, the warm-started subproblems require relatively few iterations, indicating that solutions can be efficiently reused within a rank plateau.

\begin{figure}[t]
    \centering
    \includegraphics[width=\linewidth]{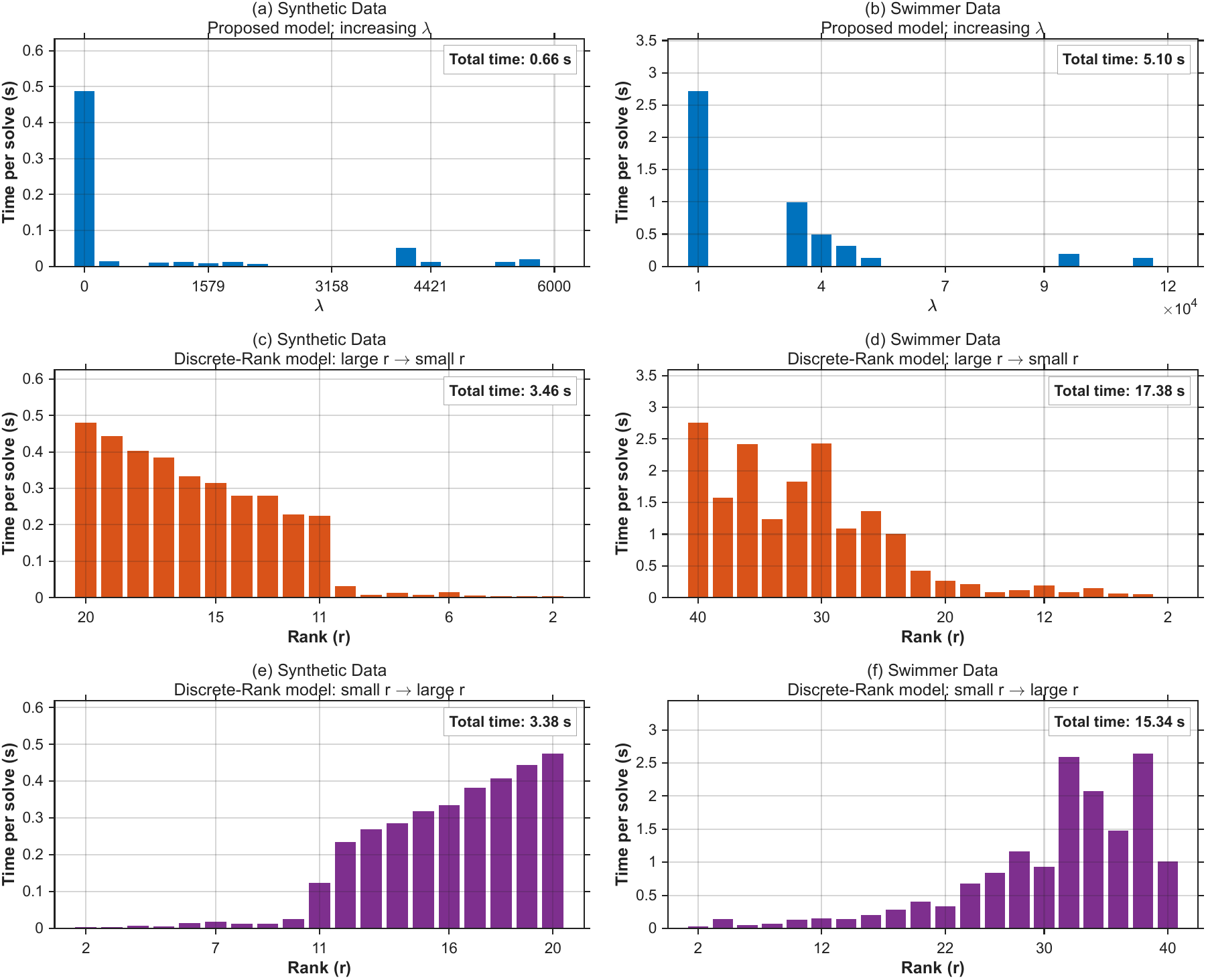}
    \caption{Computational-time comparison between the continuous $\lambda$-path and two discrete $r$-paths of the Discrete-Rank model (i.e., $\lambda=0$ in \eqref{eq:l20_regularized_nmf}, with $r$ varied from large to small or from small to large). The discrete paths use $r\in\{2,\ldots,20\}$ for the synthetic data and $r\in\{2,\ldots,40\}$ for Swimmer; $\mu=10^{-8}$ and $N_\lambda=20$.}
    \label{fig:time_comparison}
\end{figure}

\Cref{fig:time_comparison} further compares the continuous $\lambda$-path with two discrete $r$-paths. On the synthetic data, the proposed path requires $0.66$ seconds in total, compared with $3.46$ and $3.38$ seconds for the decreasing- and increasing-rank paths, respectively. On Swimmer, the corresponding times are $5.10$, $17.38$, and $15.34$ seconds. A discrete $r$-path must repeatedly change the dimensions of the factor matrices: decreasing $r$ discards learned components, whereas increasing $r$ introduces randomly initialized columns. Both operations disrupt the current solution and require substantial re-optimization at successive ranks. In contrast, the continuous $\lambda$-path retains fixed factor dimensions and progressively removes redundant components, allowing the information accumulated at preceding path points to be preserved.

\subsection{Effect of Scale Balancing on Rank Selection}\label{sec:scale_balancing_experiment}

Using the same datasets and parameter settings as in \Cref{sec:lambda_path_analysis}, we compare S-iPALM with the original iPALM to evaluate the effect of scale balancing on the $\lambda$-path.

\begin{figure}[t]
    \centering
    \includegraphics[width=\linewidth]{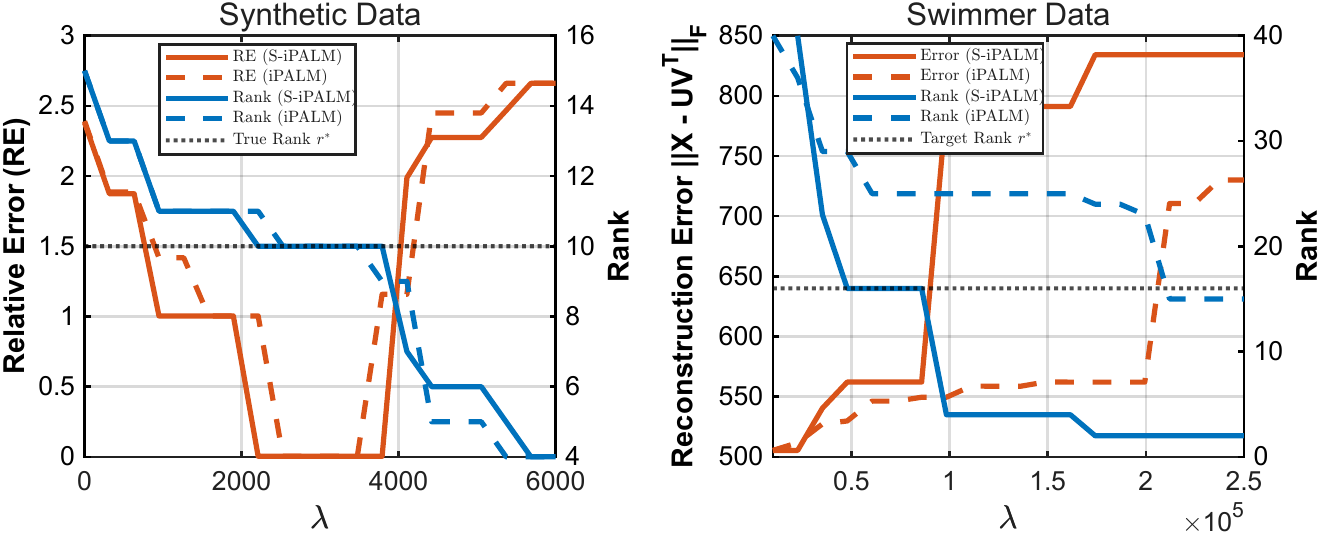}
    \caption{Effect of scale balancing on the $\lambda$-paths for the synthetic and Swimmer datasets. S-iPALM denotes the scale-balanced method and iPALM its unbalanced counterpart; $\mu=10^{-8}$ and $N_\lambda=20$.}
    \label{fig:scale_comparison}
\end{figure}

\Cref{fig:scale_comparison} shows that scale balancing improves the stability of the rank-selection path. For the synthetic data, S-iPALM produces a wider plateau at the true rank and maintains near-zero recovery error over that plateau. For Swimmer, S-iPALM reaches the reference rank at a substantially smaller value of $\lambda$ and with a lower reconstruction error, whereas the unbalanced method remains over-ranked over much of the path.

In view of the algorithmic efficiency results in \Cref{sec:efficiency} and the scale-balancing behavior observed above, the subsequent synthetic experiments use iPALM without scale balancing, whereas the real-world experiments use its scale-balanced variant; for brevity, both are reported as iPALM.

\subsection{Numerical Results for Synthetic Data}\label{sec:synthetic}

Next, we compare the rank recovery performance of iPALM and PASM against SVHT and CV on synthetic data, as a recent comprehensive evaluation \cite{eswar2024rank} has identified them as the most promising and reliable practical baselines for NMF rank selection. Performance is evaluated using the recovery error (RE) between the ground-truth factor $W^*$ and the estimated factor $W$. To remove the scaling ambiguity of NMF \cite{gillis2020nonnegative,landy2026bayesnmf}, all nonzero columns are normalized to unit $\ell_2$-norm, yielding $\bar W^*$ and $\bar W$, while zero columns in $W$ remain zero. Following standard factor-alignment practice \cite{qu2015subspace,kuhn1955hungarian}, let $\Pi^*$ denote the optimal partial permutation obtained by the Hungarian algorithm, and let $\mathcal M$ be the indices of the matched columns in $\bar W$. We define
\begin{equation}
    \operatorname{RE}(W^*,W)
    =
    \left\|\bar W^*-\bar W\Pi^*\right\|_F
    +
    \left\|\bar W_{:,\mathcal M^c}\right\|_F .
\end{equation}
The first term measures the optimally aligned factor error and automatically penalizes rank underestimation by matching missing components with zero columns, whereas the second term penalizes unmatched columns caused by rank overestimation. Thus, the metric accounts for the scaling and permutation ambiguities of NMF as well as rank mismatch.

The synthetic observation matrix is generated as $X = \max(0, U^*(V^*)^\top + \sigma \mathcal{E})$, where factors $U^* \in \mathbb{R}^{n \times r^*}$ and $V^* \in \mathbb{R}^{m \times r^*}$ are independently sampled from $\mathcal{N}(0,1)$ and truncated to non-negativity. We evaluate matrix dimensions from $(1000, 400)$ to $(5000, 4000)$, varying the true rank $r^* \in \{10, 20\}$ and noise level $\sigma \in \{0.1, 0.3\}$. Both iPALM and PASM are capped at 2000 iterations with a tolerance of $10^{-4}$.

\begin{table*}[htbp]
\centering
\caption{Average RE, predicted rank, and running time (in seconds) of four methods for synthetic datasets. `-' indicates that the relative error is not applicable or the algorithm failed to complete within acceptable time.}
\label{tab:synthetic}
\scalebox{0.8}{
\begin{tabular}{cccc|ccc|ccc|ccc|ccc}
\toprule
\multirow{2}{*}{$n$} & \multirow{2}{*}{$m$} & \multirow{2}{*}{$\sigma$} & \multirow{2}{*}{$r^*$} & \multicolumn{3}{c}{iPALM} & \multicolumn{3}{c}{PASM} & \multicolumn{3}{c}{SVHT} & \multicolumn{3}{c}{CV} \\
\cmidrule(lr){5-7} \cmidrule(lr){8-10} \cmidrule(lr){11-13} \cmidrule(lr){14-16}
 & & & & Rank & RE & Time & Rank & RE & Time & Rank & RE & Time & Rank & RE & Time \\
\midrule
\begin{filecontents*}{NMF_Synthetic_Experiments_Formatted_m1000.csv}
m,n,sigma,r_star,iPALM_best_rank,iPALM_RE,iPALM_time,PASM_best_rank,PASM_RE,PASM_time,SVHT_best_rank,SVHT_RE,SVHT_time,CV_best_rank,CV_RE,CV_time
1000,400,0.1,10,10,3.459015e-02,0.58,10,3.459015e-02,0.65,10,-,0.02,10,3.459015e-02,187.57
1000,400,0.1,20,20,4.677367e-02,0.71,20,4.673877e-02,0.81,20,-,0.01,20,4.675550e-02,149.67
1000,400,0.3,10,10,1.070742e-01,0.50,10,1.070742e-01,0.75,10,-,0.01,10,1.070495e-01,152.63
1000,400,0.3,20,20,1.409395e-01,0.65,20,1.409342e-01,0.84,20,-,0.01,20,1.409780e-01,131.46
1000,800,0.1,10,10,2.507372e-02,0.76,10,2.507372e-02,0.98,10,-,0.04,10,2.507298e-02,386.13
1000,800,0.1,20,20,3.211601e-02,1.02,20,3.210953e-02,1.19,20,-,0.04,20,3.211727e-02,301.11
1000,800,0.3,10,10,7.918155e-02,0.90,10,7.918155e-02,1.33,10,-,0.06,10,7.916743e-02,322.81
1000,800,0.3,20,20,9.723429e-02,1.43,20,9.723652e-02,1.36,20,-,0.06,20,9.724437e-02,287.89
\end{filecontents*}
\csvreader[
    late after line=\\, 
]{NMF_Synthetic_Experiments_Formatted_m1000.csv}{
    m=\m, n=\n, sigma=\sig, r_star=\rstar,
    iPALM_best_rank=\iprank, iPALM_RE=\ipre, iPALM_time=\iptime,
    PASM_best_rank=\pasrank, PASM_RE=\pasre, PASM_time=\pastime,
    SVHT_best_rank=\svhtrank, SVHT_RE=\svhtre, SVHT_time=\svhttime,
    CV_best_rank=\cvrank, CV_RE=\cvre, CV_time=\cvtime
}{%
    \m & \n & \sig & \rstar & 
    \iprank & \ipre & \iptime & 
    \pasrank & \pasre & \pastime & 
    \svhtrank & \svhtre & \svhttime & 
    \cvrank & \cvre & \cvtime
}
\midrule
\begin{filecontents*}{NMF_Synthetic_Experiments_Formatted_m3000.csv}
m,n,sigma,r_star,iPALM_best_rank,iPALM_RE,iPALM_time,PASM_best_rank,PASM_RE,PASM_time,SVHT_best_rank,SVHT_RE,SVHT_time,CV_best_rank,CV_RE,CV_time
3000,1000,0.1,10,10,2.311187e-02,2.42,10,2.311187e-02,3.54,10,-,0.09,10,2.310838e-02,917.15
3000,1000,0.1,20,20,2.847521e-02,4.09,20,2.842076e-02,3.99,20,-,0.11,20,2.843069e-02,946.61
3000,1000,0.3,10,10,7.348318e-02,3.02,10,7.348318e-02,4.43,11,-,0.13,10,7.346473e-02,906.29
3000,1000,0.3,20,22,1.965367e+00,4.10,20,8.626572e-02,3.83,20,-,0.11,20,8.627458e-02,937.76
3000,2000,0.1,10,10,1.795882e-02,5.75,10,1.795882e-02,5.97,11,-,0.55,10,1.795499e-02,1499.15
3000,2000,0.1,20,20,1.967294e-02,6.86,20,1.967327e-02,6.76,20,-,0.50,20,1.968731e-02,1678.68
3000,2000,0.3,10,10,5.864614e-02,5.62,10,5.864615e-02,5.87,11,-,0.51,10,5.863489e-02,1564.50
3000,2000,0.3,20,20,6.007926e-02,6.36,20,6.007905e-02,6.97,20,-,0.46,20,6.008414e-02,1704.50
\end{filecontents*}
\csvreader[
    late after line=\\, 
]{NMF_Synthetic_Experiments_Formatted_m3000.csv}{
    m=\m, n=\n, sigma=\sig, r_star=\rstar,
    iPALM_best_rank=\iprank, iPALM_RE=\ipre, iPALM_time=\iptime,
    PASM_best_rank=\pasrank, PASM_RE=\pasre, PASM_time=\pastime,
    SVHT_best_rank=\svhtrank, SVHT_RE=\svhtre, SVHT_time=\svhttime,
    CV_best_rank=\cvrank, CV_RE=\cvre, CV_time=\cvtime
}{%
    \m & \n & \sig & \rstar & 
    \iprank & \ipre & \iptime & 
    \pasrank & \pasre & \pastime & 
    \svhtrank & \svhtre & \svhttime & 
    \cvrank & \cvre & \cvtime
}
\midrule
\begin{filecontents*}{NMF_Synthetic_Experiments_Formatted_m5000.csv}
m,n,sigma,r_star,iPALM_best_rank,iPALM_RE,iPALM_time,PASM_best_rank,PASM_RE,PASM_time,SVHT_best_rank,SVHT_RE,SVHT_time,CV_best_rank,CV_RE,CV_time
5000,3000,0.1,10,10,1.530275e-02,8.79,10,1.530275e-02,9.43,11,-,1.28,-,-,-
5000,3000,0.1,20,20,1.604520e-02,8.86,20,1.604530e-02,10.01,20,-,1.28,-,-,-
5000,3000,0.3,10,10,5.100082e-02,7.74,10,5.100082e-02,9.35,11,-,1.26,-,-,-
5000,3000,0.3,20,20,4.929761e-02,9.76,20,4.929794e-02,9.82,20,-,1.26,-,-,-
5000,4000,0.1,10,10,1.402154e-02,10.47,10,1.402154e-02,12.14,11,-,2.11,-,-,-
5000,4000,0.1,20,20,1.394394e-02,11.85,20,1.394398e-02,12.66,20,-,2.07,-,-,-
5000,4000,0.3,10,10,4.734162e-02,10.93,10,4.734162e-02,12.57,11,-,2.12,-,-,-
5000,4000,0.3,20,20,4.306148e-02,12.28,20,4.306081e-02,12.64,20,-,2.08,-,-,-
\end{filecontents*}
\csvreader[
    late after line=\\, 
]{NMF_Synthetic_Experiments_Formatted_m5000.csv}{
    m=\m, n=\n, sigma=\sig, r_star=\rstar,
    iPALM_best_rank=\iprank, iPALM_RE=\ipre, iPALM_time=\iptime,
    PASM_best_rank=\pasrank, PASM_RE=\pasre, PASM_time=\pastime,
    SVHT_best_rank=\svhtrank, SVHT_RE=\svhtre, SVHT_time=\svhttime,
    CV_best_rank=\cvrank, CV_RE=\cvre, CV_time=\cvtime
}{%
    \m & \n & \sig & \rstar & 
    \iprank & \ipre & \iptime & 
    \pasrank & \pasre & \pastime & 
    \svhtrank & \svhtre & \svhttime & 
    \cvrank & \cvre & \cvtime
}
\bottomrule
\end{tabular}}
\end{table*}

As detailed in Table \ref{tab:synthetic}, PASM perfectly recovers the true rank $r^*$ across all dimensions and noise levels with 100\% accuracy. iPALM is comparably precise, overestimating by only a single rank under strong noise conditions ($n=3000, m=1000, \sigma=0.3$). SVHT, while computationally fast as it avoids full factorizations, suffers from systematic rank overestimation as dimensions scale up. When accurately predicting the rank, both iPALM and PASM achieve exceptionally low relative errors ($10^{-2}$ to $10^{-1}$), directly matching the benchmark performance of CV. Furthermore, utilizing the $\lambda$-path warm-start grants iPALM and PASM excellent scalability, converging in seconds, whereas the exhaustive CV method faces severe computational bottlenecks, entirely failing on $n=5000$ matrices.

\subsection{Performance on Diverse Signal Benchmarks}\label{sec:real_data}

We finally evaluate model-order selection on seven benchmark matrices spanning single-cell expression, image decomposition, document--term representations, and hyperspectral unmixing. The reported number of cell types, image components or subjects, document categories, or spectral endmembers is used as the reference order; as noted in the Introduction, small discrepancies from the exact nonnegative rank may arise from noise and model mismatch.

\begin{itemize}
    \item \textbf{PBMC3K \cite{zheng2017massively}:} The $2700\times13714$ single-cell RNA-sequencing matrix contains nine annotated cell types, giving a reference order of 9.
    \item \textbf{Swimmer\cite{donoho2003does}:} The $256 \times 1024$ nonnegative image matrix contains 1024 stick-figure images generated from 16 independent limb positions, giving a reference order of 16.
    \item \textbf{AT\&T Faces \cite{samaria1994parameterisation}:} The $400\times4096$ image matrix contains ten subjects, giving a reference order of 10.
    \item \textbf{Web of Science (WoS) \cite{kowsari2017HDLTex}:} The $5736\times20082$ document--term matrix contains eleven subcategories, giving a reference order of 11.
    \item \textbf{Samson \cite{zhu2017hyperspectral}:} The hyperspectral matrix contains 156 spectral bands and $95\times95=9025$ pixels. Its three reported endmembers give a reference order of 3.
    \item \textbf{Jasper Ridge \cite{zhu2017hyperspectral}:} The hyperspectral matrix contains 224 spectral bands and $100\times100=10000$ pixels. Its four reported endmembers give a reference order of 4.
    \item \textbf{Urban \cite{zhu2017hyperspectral}:} After removing noisy and water-absorption bands, the hyperspectral matrix contains 162 bands and $307\times307=94249$ pixels. We use the four-endmember reference configuration.
\end{itemize}

Because ground-truth factors are unavailable for these real datasets, we obtain a surrogate reference factor $W_{\mathrm{ref}}$ by running NMF at the adopted reference order $r_{\mathrm{ref}}$. We compute the REs of iPALM and PASM relative to $W_{\mathrm{ref}}$ using the same column normalization and Hungarian alignment as above; for SVHT, we additionally run NMF at the order selected by SVHT and evaluate the resulting factor in the same way. CV is omitted because its repeated factorizations over candidate orders require prohibitively long runtimes on these datasets.

\begin{table*}[t]
\centering
\caption{Estimated model orders, recovery errors (REs),
and execution times (seconds) on the diverse signal benchmarks.}
\label{tab:performance_comparison}

\scriptsize
\setlength{\tabcolsep}{2.2pt}
\renewcommand{\arraystretch}{1.08}

\begin{tabular}{lcc ccc ccc ccc}
\toprule
\multirow{2}{*}{\textbf{Dataset}}
& \multirow{2}{*}{\textbf{Dimension}}
& \multirow{2}{*}{\textbf{Ref. order}}
& \multicolumn{3}{c}{\textbf{iPALM}}
& \multicolumn{3}{c}{\textbf{PASM}}
& \multicolumn{3}{c}{\textbf{SVHT}} \\
\cmidrule(lr){4-6}
\cmidrule(lr){7-9}
\cmidrule(lr){10-12}
& & 
& \textbf{Rank} & $\mathbf{RE}$ & \textbf{Time (s)}
& \textbf{Rank} & $\mathbf{RE}$ & \textbf{Time (s)}
& \textbf{Rank} & $\mathbf{RE}$ & \textbf{Time (s)} \\
\midrule
PBMC3K
& $2700 \times 13714$
& 9
& 9   & 0.9037 & 25.15
& 9   & 0.7312 & 24.11
& 665 & 27.025 & 8.05 \\

Swimmer
& $256 \times 1024$
& 16
& 16 & 1.5423 & 0.89
& 16 & 1.9717 & 0.72
& 22 & 4.670 & 0.08 \\

AT\&T Faces
& $400 \times 4096$
& 10
& 10  & 6.0507 & 1.75
& 10  & 2.5812 & 1.13
& 114 & 12.412 & 0.12 \\

Web of Science
& $5736 \times 20082$
& 11
& 10   & 1.9683 & 9.93
& 12   & 2.5676 & 25.68
& 1162 & 36.597 & 9.04 \\

Samson
& $156 \times 9025$
& 3
& 5  & 1.7656 & 3.24
& 3  & 0.4957 & 4.30
& 61 & 8.335 & 0.06 \\

Jasper Ridge
& $224 \times 10000$
& 4
& 4  & 0.3885 & 4.83
& 4  & 0.6040 & 5.59
& 86 & 10.020 & 0.03 \\

Urban
& $162 \times 94249$
& 4
& 4  & 0.2777 & 25.54
& 4  & 0.4395 & 34.58
& 55 & 8.455 & 0.08 \\
\bottomrule
\end{tabular}
\end{table*}

Table~\ref{tab:performance_comparison} summarizes the results. PASM matches the adopted reference order on six of the seven benchmarks and overestimates Web of Science by one component, while iPALM matches five benchmarks, underestimates Web of Science by one component, and overestimates Samson by two. Both methods achieve substantially lower REs than SVHT on every dataset, whereas SVHT strongly overestimates all reference orders. PASM is faster on PBMC3K, Swimmer, and AT\&T Faces, while iPALM is faster on Web of Science and the three hyperspectral datasets. Overall, iPALM and PASM both provide accurate model-order estimates, low factor-recovery errors, and practical runtimes across these diverse signal benchmarks.

\section{Conclusions}
\label{sec:conclusions}

In this paper, we proposed a column $\ell_{2,0}$-norm regularized NMF model for estimating the factorization rank from a rough upper bound. A warm-started $\lambda$-path progressively eliminates redundant components without repeatedly changing the dimensions of the factor matrices. From a theoretical perspective, we established conditions under which critical points with suitably bounded objective values recover the underlying nonnegative rank, and characterized the relationships between the critical points and local minimizers of the proposed model and its smooth auxiliary counterpart. We also quantified the possible reconstruction-error floor induced by a positive squared Frobenius-norm regularization parameter, which motivates combining a sufficiently small $\mu$ with the column $\ell_{2,0}$ penalty. For computing the proposed model, we developed iPALM, its scale-balanced variant S-iPALM, and a Proximal Active-Set Method (PASM). The scale-balancing step preserves the reconstructed matrix while equalizing the norms of paired factor columns, and PASM exploits the identified active support to reduce the dimension of the NNLS subproblems. Under the stated algorithmic conditions, the Kurdyka--\L{}ojasiewicz analysis establishes whole-sequence convergence of the proposed methods to critical points. Numerical experiments show that both iPALM and PASM provide reliable order estimates across the synthetic configurations and seven diverse signal benchmarks while retaining practical runtimes. Together with the $\lambda$-path and scale-balancing ablations, the comparisons with SVHT and cross-validation demonstrate favorable order-selection accuracy and computational efficiency in the tested settings. Future work will investigate the statistical properties of the adaptive $\lambda$-path, relax the assumptions required for exact rank recovery, and extend the proposed framework to tensor and large-scale stochastic settings.

\appendices

\section{Proofs of the Structural Propositions}
\label{app:structural}

\begin{proof}[Proof of Proposition~\ref{prop:crit}]
Let $J_U$ and $J_V$ be the active column sets of $U$ and $V$. At a critical point of $\Psi_{\lambda,\mu}$, every $j\in J_U$ satisfies the nonnegative KKT conditions for the smooth part. If $V_j=0$, complementary slackness gives
\[
0=\left\langle (UV^\top-X)V_j+\mu U_j,U_j\right\rangle
=\mu\|U_j\|_2^2,
\]
contradicting $j\in J_U$. Hence $J_U\subseteq J_V$; symmetry gives $J_U=J_V=:J$. On $J$, the criticality systems of $\Psi_{\lambda,\mu}$ and $\widetilde F_\mu$ coincide, while on $J^c$ both smooth block gradients vanish because $U_j=V_j=0$. Thus $\crit\Psi_{\lambda,\mu}\subseteq\crit\widetilde F_\mu$. Conversely, the same support argument applies to a critical point of $\widetilde F_\mu$; on zero columns the limiting subdifferential of the column $\ell_{2,0}$ term is the whole ambient space, so the KKT system also gives criticality for $\Psi_{\lambda,\mu}$. Therefore the two critical sets agree.

Now let $(U^*,V^*)$ be a global minimizer, set $Y=U^*(V^*)^\top$, $q=|J|$, and $s=\rank_+(Y)$. The case $Y=0$ is immediate. Otherwise, if $q\ge s+1$, choose a balanced $s$-term nonnegative factorization $Y=AB^\top$ and zero-pad it to $r$ columns. By Theorem~\ref{thm:nmf_norm_bound}, with $p(A,B):=(\|A\|_F^2+\|B\|_F^2)/2$,
\[
p(A,B)\le \sqrt{s}\,\|Y\|_F.
\]
Since $p(U^*,V^*)>0$, the assumed bound on $\lambda/\mu$ yields
\begin{align*}
&\Psi_{\lambda,\mu}(A,B)-\Psi_{\lambda,\mu}(U^*,V^*)\\
&=\mu p(A,B)-\mu p(U^*,V^*)-2\lambda(q-s)\\
&<\mu\sqrt{s}\,\|Y\|_F-2\lambda(q-s)\le0,
\end{align*}
contradicting global optimality. Hence $q=s$, which proves the second assertion.
\end{proof}

\begin{proof}[Proof of Proposition~\ref{prop:lower bound}]
Put $Y=\overline U\overline V^\top$. Taking inner products of the two block KKT conditions with $\overline U$ and $\overline V$ gives
\[
\langle Y-X,Y\rangle+\mu\|\overline U\|_F^2=0,
\qquad
\langle Y-X,Y\rangle+\mu\|\overline V\|_F^2=0.
\]
Thus $\|\overline U\|_F=\|\overline V\|_F$ and, using $\|Y\|_F\le\|\overline U\|_F\|\overline V\|_F$,
\[
\mu\|Y\|_F
\le \mu\|\overline U\|_F^2
=\langle X-Y,Y\rangle
\le\|X-Y\|_F\|Y\|_F.
\]
Proposition~\ref{prop:crit} and nonnegativity imply $Y\ne0$ for a nonzero critical point, so $\|X-Y\|_F\ge\mu$. Since $X=X^*+N$,
\[
\mu\le\|X^*-Y+N\|_F\le\|Y-X^*\|_F+\|N\|_F,
\]
and nonnegativity of the norm gives the claimed maximum with zero.
\end{proof}

\section{Proofs of the Scale-Balancing Bounds}
\label{app:balancing}

\begin{proof}[Proof of Lemma~\ref{lemma:column_jump}]
Let $a=\|\hat x\|_2$, $b=\|\hat y\|_2$, and $d=\|\Delta x\|_2+\|\Delta y\|_2$. Because $\|x\|_2=\|y\|_2$, the reverse triangle inequality gives $|a-b|\le d$. If $ab>0$, the balancing factor is $s=\sqrt{b/a}$, and hence
\begin{align*}
&\|x^+-\hat x\|_2+\|y^+-\hat y\|_2=|s-1|a+|s^{-1}-1|b\\
&=|\sqrt a-\sqrt b|(\sqrt a+\sqrt b)=|a-b|\le d.
\end{align*}
If, say, $a=0$, the co-sparsity rule sets both output columns to zero. Then $\rho\le\|\Delta x\|_2$ and $b\le\rho+\|\Delta y\|_2\le d$; the case $b=0$ is symmetric.
\end{proof}

\begin{proof}[Proof of Lemma~\ref{lemma:matrix_jump}]
For column $i$, denote the two balancing displacements by $j_{u,i},j_{v,i}$ and the two preceding update lengths by $d_{u,i},d_{v,i}$. Lemma~\ref{lemma:column_jump} gives $j_{u,i}+j_{v,i}\le d_{u,i}+d_{v,i}$. Therefore
\[
j_{u,i}^2+j_{v,i}^2
\le(j_{u,i}+j_{v,i})^2
\le2(d_{u,i}^2+d_{v,i}^2),
\]
and summing over $i$ proves \eqref{eq:matrix_jump_sq}. For the nonsquared bound, collect the four sequences into nonnegative vectors $j_u,j_v,d_u,d_v$. Monotonicity of the Euclidean norm, followed by Minkowski's inequality, gives
\begin{align*}
\|j_u\|_2+\|j_v\|_2
&\le\sqrt2\|j_u+j_v\|_2\le\sqrt2\|d_u+d_v\|_2\\
&\le\sqrt2\|d_u\|_2+\sqrt2\|d_v\|_2,
\end{align*}
which is \eqref{eq:matrix_jump_l1}.
\end{proof}

\section{Proof of the P-Stationarity Characterization}
\label{app:pstationarity}

\begin{proof}[Proof of Theorem~\ref{thm:p_stationarity}]
Write $G_i=\nabla_UF_\mu(U,V)_i$, $Z_i=U_i-G_i/\gamma_{1,k}$, and $t=\sqrt{2\lambda/\gamma_{1,k}}$. The proximal problem separates by columns, and its $i$th solution set is obtained by comparing $0$ with the nonnegative projection $(Z_i)_+$:
\[
\mathcal P(Z_i)=
\begin{cases}
\{0\},&\|(Z_i)_+\|_2<t,\\
\{0,(Z_i)_+\},&\|(Z_i)_+\|_2=t,\\
\{(Z_i)_+\},&\|(Z_i)_+\|_2>t.
\end{cases}
\]
If the fixed point has $U_i=0$, then $Z_i=-G_i/\gamma_{1,k}$ and $0\in\mathcal P(Z_i)$ is equivalent to $\|[-G_i]_+\|_2\le\sqrt{2\lambda\gamma_{1,k}}$, which is (a). If $U_i\ne0$, then $U_i=(Z_i)_+$ and $\|U_i\|_2\ge t$. Elementwise, the identity
\[
U_i=(U_i-G_i/\gamma_{1,k})_+
\]
is equivalent to $U_i\ge0$, $G_i\ge0$, and $U_i\odot G_i=0$, giving (b). Conversely, (a) places $0$ in $\mathcal P(Z_i)$, while (b) gives $U_i=(Z_i)_+\in\mathcal P(Z_i)$; hence either condition implies the fixed-point relation.
\end{proof}

\section{Additional Proofs for the Structural Results}
\label{app:additional_structural_proofs}

\subsection{Proof of Theorem~\ref{thm:nmf_norm_bound}}
\label{app:proof_nmf_norm_bound}

\begin{proof}
Write $X=\sum_{k=1}^r X_k$ with $X_k=u_kv_k^\top$, where $u_k$ and $v_k$ are the $k$th columns of $U$ and $V$. The first two inequalities follow from the variational definitions of the nuclear and nonnegative nuclear norms. Balance of each pair gives
\[
\frac12(\|U\|_F^2+\|V\|_F^2)
=\sum_{k=1}^r\|u_k\|_2\|v_k\|_2
=\sum_{k=1}^r\|X_k\|_F.
\]
Since $X_k\ge0$, all cross inner products are nonnegative; hence
\[
\sum_{k=1}^r\|X_k\|_F^2
\le\left\|\sum_{k=1}^rX_k\right\|_F^2
=\|X\|_F^2.
\]
Cauchy--Schwarz now yields
$\sum_k\|X_k\|_F\le\sqrt r(\sum_k\|X_k\|_F^2)^{1/2}
\le\sqrt r\|X\|_F$.
\end{proof}

\subsection{Proof of Proposition~\ref{prop:exact_recovery}}
\label{app:proof_exact_recovery}

\begin{proof}
The case $r^*=0$ is immediate, so assume $r^*\ge1$ and choose
\begin{equation}
0<\mu<\frac{\lambda}{\sqrt{r^*}\|X^*\|_F}.
\label{eq:sup_mu_choice}
\end{equation}
This is permitted by the phrase ``sufficiently small'' in the proposition.
Let $q$ be the common number of active columns of $\overline U$ and $\overline V$; the equality of their supports follows from Proposition~\ref{prop:crit}. Dropping all nonnegative terms except the column penalty from the assumed energy bound gives
\[
2\lambda q
\le \tfrac12\|N\|_F^2
   +\mu\sqrt{r^*}\|X^*\|_F+2\lambda r^*
<2\lambda(r^*+1),
\]
where $\|N\|_F^2/2\le\lambda$ and \eqref{eq:sup_mu_choice} were used. Since $q$ is integral, $q\le r^*$, and therefore
\[
\bar r:=\rank_+(\overline U\overline V^\top)\le q\le r^*.
\]

It remains to exclude $\bar r<r^*$. Put
$s=\sigma_{r^*}(X)$. Weyl's inequality and the noise assumption imply
\begin{equation}
s\ge\sigma_{r^*}(X^*)-\|N\|_F>0,
\qquad
\|N\|_F\le s/4.
\label{eq:sup_weyl}
\end{equation}
For $\delta=r^*-\bar r\ge1$, relaxation from nonnegative rank to algebraic rank and the Eckart--Young--Mirsky theorem give
\begin{align}
\Psi_{\lambda,\mu}(\overline U,\overline V)
&\ge \min_{\rank(Z)\le\bar r}\frac12\|X-Z\|_F^2
     +2\lambda\bar r \notag\\
&=\frac12\sum_{i>\bar r}\sigma_i^2(X)+2\lambda\bar r
 \ge\frac{\delta}{2}s^2+2\lambda\bar r.
\label{eq:sup_rank_lower}
\end{align}
Moreover, the upper bound on $\lambda$ and \eqref{eq:sup_weyl} imply
$\lambda\le s^2/32$. Subtracting the assumed energy threshold from the last expression in \eqref{eq:sup_rank_lower}, and again using \eqref{eq:sup_mu_choice}, yields
\begin{align*}
&\delta\left(\frac{s^2}{2}-2\lambda\right)
  -\frac12\|N\|_F^2
  -\mu\sqrt{r^*}\|X^*\|_F\\
&\quad>\frac{7\delta}{16}s^2-\frac1{32}s^2-\lambda
\ge\frac{3}{8}s^2>0.
\end{align*}
Thus $\Psi_{\lambda,\mu}(\overline U,\overline V)$ would exceed the prescribed energy bound, a contradiction. Hence $\bar r=r^*$.
\end{proof}

\subsection{Proof of Proposition~\ref{prop:local-minimizer}}
\label{app:proof_local_minimizer}

\begin{proof}
Suppose first that $(\overline U,\overline V)$ is a local minimizer of $\widetilde F_\mu$. A sufficiently small neighborhood cannot turn any nonzero column of $\overline U$ or $\overline V$ into a zero column. Hence, throughout that neighborhood,
\[
\|U\|_{2,0}\ge\|\overline U\|_{2,0},
\qquad
\|V\|_{2,0}\ge\|\overline V\|_{2,0}.
\]
Adding these two locally nondecreasing cardinality terms to the local-minimum inequality for $\widetilde F_\mu$ proves local minimality for $\Psi_{\lambda,\mu}$. If the former inequality has quadratic growth with constant $\alpha>0$, the same $\alpha$ proves strong local minimality.

Conversely, let $(\overline U,\overline V)$ be a nonzero local minimizer of $\Psi_{\lambda,\mu}$. Proposition~\ref{prop:crit} gives a common active set $J$. For $(A,B)$ sufficiently close to $(\overline U_J,\overline V_J)$, every column remains nonzero. Zero-padding $(A,B)$ outside $J$ therefore produces $(U,V)$ near $(\overline U,\overline V)$ and
\[
\Psi_{\lambda,\mu}(U,V)=\widetilde F_\mu(A,B)+2\lambda|J|.
\]
Subtracting the same constant at $(\overline U,\overline V)$ transfers the local-minimum inequality to $(\overline U_J,\overline V_J)$. The zero-padding is an isometry, so the quadratic-growth inequality transfers unchanged in the strong case.
\end{proof}

\section{Detailed Proofs for iPALM and Scale-Balanced iPALM}
\label{app:detailed_ipalm_proofs}

\subsection{Proof of Proposition~\ref{prop:KL_properties-iPALM}}
\label{app:proof_kl_ipalm}

\begin{proof}
For compactness, define
\[
d_U^k=U^k-U^{k-1},\quad e_U^{k+1}=U^{k+1}-U^k,
\]
and analogously $d_V^k,e_V^{k+1}$. Optimality of the two proximal subproblems, followed by the block descent lemma, gives
\begin{multline}
\Psi(U^{k+1},V^{k+1})-\Psi(U^k,V^k)\\
\le \frac{\gamma_{1,k}}2\|\beta_kd_U^k\|_F^2
   -\frac{\alpha_{1,k}}2\|e_U^{k+1}-\beta_kd_U^k\|_F^2\\
\quad+\frac{\gamma_{2,k}}2\|\beta_kd_V^k\|_F^2
   -\frac{\alpha_{2,k}}2\|e_V^{k+1}-\beta_kd_V^k\|_F^2.
\label{eq:sup_ipalm_basic}
\end{multline}
Indeed, set $G_U^k=\nabla_UF_\mu(\widetilde U^k,V^k)$. For the $U$ block, this follows by adding
\begin{multline*}
\langle G_U^k,U^{k+1}-U^k\rangle
+\frac{\gamma_{1,k}}2\|U^{k+1}-\widetilde U^k\|_F^2\\
-\frac{\gamma_{1,k}}2\|U^k-\widetilde U^k\|_F^2
+g_1(U^{k+1})-g_1(U^k)\le0
\end{multline*}
to the standard smooth descent bound; the $V$ block is identical.

Expanding \eqref{eq:sup_ipalm_basic} and adding the two quadratic terms in $\Xi$ yields
\begin{align}
\Delta\Xi^k
&\le \frac{\tau_{V^k}\beta_k^2-\rho_1\alpha_2}{2}\|d_U^k\|_F^2 \notag\\
&\quad-\frac{\alpha_{1,k}-\rho_1\alpha_2}{2}\|e_U^{k+1}\|_F^2 \notag\\
&\quad+\alpha_{1,k}\beta_k\langle e_U^{k+1},d_U^k\rangle \notag\\
&\quad+\frac{\tau_{U^{k+1}}\beta_k^2-\rho_2\alpha_2}{2}\|d_V^k\|_F^2 \notag\\
&\quad-\frac{\alpha_{2,k}-\rho_2\alpha_2}{2}\|e_V^{k+1}\|_F^2 \notag\\
&\quad+\alpha_{2,k}\beta_k\langle e_V^{k+1},d_V^k\rangle.
\label{eq:sup_delta_xi}
\end{align}
Apply Young's inequality to the first cross term with
$t_{1,k}=(\alpha_{1,k}-\rho_1\alpha_2)/\alpha_{1,k}^2$, and analogously with
$t_{2,k}=(\alpha_{2,k}-\rho_2\alpha_2)/\alpha_{2,k}^2$. The $e$-terms cancel, and the remaining coefficients are exactly $-\nu_{1,k}$ and $-\nu_{2,k}$. The stated upper bound on $\beta_k$, together with
$\alpha_{i,k}\in[\alpha_1,\alpha_2]$ and the definition of $\tau$, keeps both coefficients uniformly positive. This proves (i).

Summing the decrease inequality shows that
$\sum_k(\|d_U^k\|_F^2+\|d_V^k\|_F^2)<\infty$. Coercivity of $\Psi$ (because $\mu>0$) and monotonicity of $\Xi$ imply boundedness, so the cluster set $\Upsilon$ is nonempty and compact. The successive differences vanish. Along any convergent subsequence, proximal optimality gives the matching upper-semicontinuity inequality for $g_1$ and $g_2$; combined with their lower semicontinuity, this gives convergence of both nonsmooth terms. Consequently $\Xi$ is constant on $\Upsilon$ and equals the limit of $\Xi^k$, proving (ii).

For (iii), the $U$-optimality condition is
\[
0\in\nabla_UF_\mu(\widetilde U^k,V^k)
  +\gamma_{1,k}(U^{k+1}-\widetilde U^k)+\partial g_1(U^{k+1}).
\]
It yields the following first component of a subgradient of the augmented potential:
\begin{multline*}
\Gamma_U^{k+1}
=\nabla_UF_\mu(U^{k+1},V^{k+1})
 -\nabla_UF_\mu(\widetilde U^k,V^k)\\
 -\gamma_{1,k}(U^{k+1}-\widetilde U^k)
 +\rho_1\alpha_2(U^{k+1}-U^k).
\end{multline*}
Block Lipschitz continuity on the bounded trajectory, together with
$\widetilde U^k-U^k=\beta_kd_U^k$, bounds this component by a fixed linear combination of
$\|e_U^{k+1}\|_F,\|d_U^k\|_F$, and $\|e_V^{k+1}\|_F$. The symmetric construction for $V$, plus the last two components
$\rho_1\alpha_2(U^k-U^{k+1})$ and
$\rho_2\alpha_2(V^k-V^{k+1})$, gives the stated inequality with the uniform constants $c_1$ and $c_2$.
\end{proof}

\subsection{Proof of Theorem~\ref{thm:unified_global_convergence}}
\label{app:proof_unified_convergence}

\begin{proof}
The decrease condition and lower boundedness of $\psi$ give
$\sum_k(\mathcal R^k)^2<\infty$, hence $\mathcal R^k\to0$. The step bound then implies $\Delta Z^k\to0$. Let $\psi^*=\lim_k\psi(X^k)$; in the augmented case every cluster point has the form $(Z^*,Z^*)$.

If $\psi(X^{k_0})=\psi^*$ for some $k_0$, sufficient decrease gives
$\mathcal R^k=0$ for all $k\ge k_0$, and the claim is immediate. Otherwise, the uniformized KL inequality on the compact cluster set provides a concave desingularizing function $\phi$ such that, for all sufficiently large $k$,
\[
\phi'(\psi(X^{k+1})-\psi^*)\|W^{k+1}\|_F\ge1.
\]
Set
$D_k=\phi(\psi(X^k)-\psi^*)-
\phi(\psi(X^{k+1})-\psi^*)$. Concavity, sufficient decrease, and the subgradient bound imply
\[
D_k\ge
\frac{a(\mathcal R^k)^2}{b(\mathcal R^k+\mathcal R^{k+1})}.
\]
Using $\sqrt{xy}\le(x+y)/2$ gives
\begin{align*}
\mathcal R^k
&\le\frac14(\mathcal R^k+\mathcal R^{k+1})+\frac baD_k,\\
3\mathcal R^k-\mathcal R^{k+1}
&\le\frac{4b}{a}D_k.
\end{align*}
Summing from $K$ to $N$ and telescoping $D_k$ yields
\begin{align*}
&2\sum_{k=K}^N\mathcal R^k+\mathcal R^K-\mathcal R^{N+1}\\
&\quad\le\frac{4b}{a}\big[\phi(\psi(X^K)-\psi^*)\\
&\hspace{7em}-\phi(\psi(X^{N+1})-\psi^*)\big].
\end{align*}
Letting $N\to\infty$ proves $\sum_k\mathcal R^k<\infty$. Therefore
\[
\sum_k\|\Delta Z^k\|_F\le c\sum_k\mathcal R^k<\infty,
\]
so $Z^k$ is Cauchy and converges to a single limit $Z^*$. Finally, the subgradient bound gives $W^{k+1}\to0$; closedness of the limiting subdifferential along the convergent objective values yields $0\in\partial\psi(Z^*)$ (or at $(Z^*,Z^*)$ for the augmented potential).
\end{proof}

\subsection{Proof of Proposition~\ref{prop:iPALM_scale}}
\label{app:proof_scale_ipalm}

\begin{proof}
Let $\hat U^{k+1},\hat V^{k+1}$ denote the outputs before scale balancing. Repeating the derivation of \eqref{eq:sup_ipalm_basic} for these intermediate variables and using the scaling optimality in Remark~\ref{remark2-scale}(a) gives the same descent bound for $\Psi(U^{k+1},V^{k+1})$. Lemma~\ref{lemma:matrix_jump}, followed by
$\|a+b\|_F^2\le2\|a\|_F^2+2\|b\|_F^2$, bounds the new potential terms and yields
\begin{multline}
\Delta\Xi^k
\le\frac{\rho\alpha_2}{2}
 \big(3\|\hat U^{k+1}-U^k\|_F^2-\|U^k-U^{k-1}\|_F^2\big)\\
+\frac{\rho\alpha_2}{2}
 \big(3\|\hat V^{k+1}-V^k\|_F^2-\|V^k-V^{k-1}\|_F^2\big)\\
+\frac{\gamma_{1,k}}2\|U^k-\widetilde U^k\|_F^2
 -\frac{\alpha_{1,k}}2\|\hat U^{k+1}-\widetilde U^k\|_F^2\\
+\frac{\gamma_{2,k}}2\|V^k-\widetilde V^k\|_F^2
 -\frac{\alpha_{2,k}}2\|\hat V^{k+1}-\widetilde V^k\|_F^2.
\label{eq:sup_scale_delta}
\end{multline}
Expanding the extrapolations and applying Young's inequality as in the proof of Proposition~\ref{prop:KL_properties-iPALM}, now with
$t_{i,k}=(\alpha_{i,k}-3\rho\alpha_2)/\alpha_{i,k}^2$, gives exactly the coefficients $\nu_{1,k}$ and $\nu_{2,k}$ in the proposition. The stated bound on $\beta_k$ is conservative for $3\rho$ and keeps both coefficients uniformly positive, proving (i).

The decrease inequality and coercivity give boundedness and
$U^{k+1}-U^k\to0$, $V^{k+1}-V^k\to0$. In \eqref{eq:sup_scale_delta}, the coefficients of
$\|\hat U^{k+1}-U^k\|_F^2$ and
$\|\hat V^{k+1}-V^k\|_F^2$ are bounded above by a negative constant because
$3\rho\alpha_2<\alpha_1/2$. Hence both intermediate differences vanish; Lemma~\ref{lemma:matrix_jump} also makes the balancing displacement vanish. The same lower-semicontinuity and proximal-optimality argument used above shows that $\Xi$ is constant on the compact cluster set. This proves (ii).

For (iii), the proximal optimality condition at $\hat U^{k+1}$ constructs a valid subgradient at $U^{k+1}$ because positive column scaling preserves the zero pattern, the normal cone, and the local subdifferential of the column $\ell_{2,0}$ term. One may take
\begin{multline*}
\Gamma_U^{k+1}
=\nabla_UF_\mu(U^{k+1},V^{k+1})
 -\nabla_UF_\mu(\widetilde U^k,V^k)\\
 -\gamma_{1,k}(\hat U^{k+1}-\widetilde U^k)
 +\rho\alpha_2(U^{k+1}-U^k),
\end{multline*}
and construct the $V$ component symmetrically. Block Lipschitz bounds on the compact trajectory, the extrapolation identities, and Lemma~\ref{lemma:matrix_jump} give the asserted residual bound with the stated uniform $c_1,c_2$.
\end{proof}

\section{Detailed Proofs for the Proximal Active-Set Method}
\label{app:detailed_pasm_proofs}

\subsection{HALS descent and subgradient bounds}
\label{app:proof_hals_bounds}

\begin{proof}[Proof of Lemma~\ref{lem:descent}]
For the $i$th column, $\nabla^2_{u_i}h=(\|v_i\|_2^2+\mu)I$. Exact minimization over $u_i\ge0$ therefore gives
\begin{align*}
&\mathcal L(u_{<i}^{k+1},u_{\ge i}^{k})
-\mathcal L(u_{\le i}^{k+1},u_{>i}^{k})\\
&\quad\ge\frac{\|v_i\|_2^2+\mu}{2}
\|u_i^{k+1}-u_i^k\|_2^2.
\end{align*}
Summing over the cyclic sweep proves the result with
$c_1=\frac12\min_i(\|v_i\|_2^2+\mu)\ge\mu/2$.
\end{proof}

\begin{proof}[Proof of Lemma~\ref{lem:subgradient}]
Column optimality gives
$0\in\nabla_{u_i}h(u_{\le i}^{k+1},u_{>i}^k)+
\partial\delta_{\mathbb R_+}(u_i^{k+1})$. Set $B=V^\top V$. A subgradient at the end of the sweep is then
\begin{align*}
W_{u_i}^{k+1}
&=\nabla_{u_i}h(U^{k+1})
 -\nabla_{u_i}h(u_{\le i}^{k+1},u_{>i}^{k})\\
&=\sum_{j>i}(u_j^{k+1}-u_j^k)B_{ji}.
\end{align*}
Equivalently, $W^{k+1}=(U^{k+1}-U^k)M_L$, where $M_L$ is the strictly lower triangular part of $B$; the claimed norm bound follows immediately.
\end{proof}

\subsection{PASM sufficient decrease and subgradient bound}
\label{app:proof_pasm_bounds}

\begin{proof}[Proof of Lemma~\ref{lem:sufficient_decrease}]
The hard-thresholding step is the noninertial proximal update, so
\[
\Psi(\widetilde U^{k+1},V^k)
\le\Psi(U^k,V^k)-\frac{\alpha_1}{2}
\|\widetilde U^{k+1}-U^k\|_F^2.
\]
The HALS sweep stays on the same active support, where the $\ell_{2,0}$ term is constant. Lemma~\ref{lem:descent} therefore gives
\[
\Psi(\hat U^{k+1},V^k)
\le\Psi(\widetilde U^{k+1},V^k)-\frac\mu2
\|\hat U^{k+1}-\widetilde U^{k+1}\|_F^2.
\]
The symmetric two inequalities hold for the $V$ update. Adding all four and using the fact that scale balancing cannot increase $\Psi$ proves the result with $\rho=\min(\alpha_1,\mu)$.
\end{proof}

\begin{proof}[Proof of Lemma~\ref{lem:subgradient_bound}]
Let $\mathcal T=\mathcal T_U^k$. On inactive columns, the limiting subdifferential of the column $\ell_{2,0}$ term is the entire ambient space, so the $U$ component of a subgradient of $\Psi$ can be selected as zero. On $\mathcal T$, Lemma~\ref{lem:subgradient} supplies a HALS subgradient
$\widetilde W_{U,\mathcal T}^{k+1}$ satisfying
\begin{align*}
\|\widetilde W_{U,\mathcal T}^{k+1}\|_F
&\le C_V\|\hat U^{k+1}-\widetilde U^{k+1}\|_F,\\
C_V&=\sup_k\|M_{L,V^k}\|_2.
\end{align*}
Correcting this element for the subsequent $V$ update and scale balancing gives
\begin{align*}
(W_U^{k+1})_{\mathcal T}
&=\widetilde W_{U,\mathcal T}^{k+1}\\
&\quad+\nabla_{U_{\mathcal T}}F_\mu(U^{k+1},V^{k+1})\\
&\quad-\nabla_{U_{\mathcal T}}F_\mu(\hat U^{k+1},V^k).
\end{align*}
Consequently, on the bounded trajectory,
\begin{multline*}
\|W_U^{k+1}\|_F
\le C_V\|\hat U^{k+1}-\widetilde U^{k+1}\|_F\\
+c_f\|V^{k+1}-\hat V^{k+1}\|_F
+\mu\|U^{k+1}-\hat U^{k+1}\|_F\\
 +(c_f+\tau)\|\hat V^{k+1}-V^k\|_F.
\end{multline*}
The symmetric construction gives the $V$ bound with
$C_U=\sup_k\|M_{L,\hat U^{k+1}}\|_2$. Finally, Lemma~\ref{lemma:matrix_jump} controls both balancing displacements, while
\[
\|\hat U^{k+1}-U^k\|_F
\le\|\hat U^{k+1}-\widetilde U^{k+1}\|_F
   +\|\widetilde U^{k+1}-U^k\|_F
\]
and the analogous $V$ inequality reduce all terms to the four residuals in the statement. Collecting coefficients gives
$C=C_V+C_U+\sqrt2\mu+\tau+(\sqrt2+1)c_f$.
\end{proof}

\bibliographystyle{IEEEtran}
\bibliography{references}

\vfill

\end{document}